\newtheorem{maintheo}{Theorem}
\newtheorem{theorem}{Theorem}[section]
\newtheorem{proposition}[theorem]{Proposition}
\newtheorem{lemma}[theorem]{Lemma}
\newtheorem{corollary}[theorem]{Corollary}
\newtheorem{remark}[theorem]{Remark}
\newtheorem{claim}[theorem]{Claim}
\theoremstyle{definition}%
\def\pasdegrille{\let\grille = \pasgrille}
\def\aat#1#2#3{
\divide \dimen1 by 48 \dimen3=\dimen1 \multiply \dimen1 by #1
\advance \dimen1 by -\dimen3 \divide \dimen1 by 101 \multiply
\dimen1 by 100 \divide \dimen2 by \count11 \multiply \dimen2 by #2
\setbox0=\hbox{#3}\ht0=0pt\dp0=0pt
  \rlap{\kern\dimen1 \vbox to0pt{\kern-\dimen2\box0\vss}}\dimen1= \wd1
\dimen2=\ht1}
\def\pasgrille{
\count12= \dimen1 \divide \count12 by 50 \divide \dimen2 by \count12
\count11 =\dimen2 \ \divide \dimen1 by 48
\setlength{\unitlength}{\dimen1} \smash{\rlap{\ }} \dimen1= \wd1
\dimen2=\ht1 }
\def\grille{
\count12= \dimen1 \divide \count12 by 50 \divide \dimen2 by \count12
\count11 =\dimen2 \ \divide \dimen1 by 48
\setlength{\unitlength}{\dimen1}
\smash{\rlap{\graphpaper[1](0,0)(50, \count11)}} \dimen1= \wd1
\dimen2=\ht1 }
\numberwithin{equation}{section}
\newcommand{\R}{\mathbb R}
\newcommand{\beq}{\begin{equation}}
\newcommand{\eeq}{\end{equation}}
\newcommand{\ben}{\begin{eqnarray}}
\newcommand{\een}{\end{eqnarray}}
\newcommand{\beno}{\begin{eqnarray*}}
\newcommand{\eeno}{\end{eqnarray*}}
\def\eps{\varepsilon}
\title[Nonlinear wave equation]{Blow-up of a critical Sobolev norm for energy-subcritical and energy-supercritical wave equations}
\date{\today}
\author{Thomas Duyckaerts$^1$}
\address{
Universit\'e Paris 13,
Sorbonne Paris Cit\'e, LAGA (UMR CNRS 7539),
99, Avenue J.-B. Cl\'ement,
F-93430 Villetaneuse}
\thanks{$^1$Institut Universitaire de France and LAGA, Universit\'e Paris 13, Sorbonne Paris Cit\'e. Partially supported by ERC advanced grant no 291214 BLOWDISOL}
\email{duyckaer@math.univ-paris13.fr}
 \author{Jianwei Yang$^2$}
\thanks{$^2$LAGA, Universit\'e Paris 13, Sorbonne Paris Cit\'e and Beijing International Center for Mathematical Research, Peking
University. Partially supported by ERC advanced grant no 291214 BLOWDISOL}
\address{Beijing International Center for Mathematical Research, Peking University, Beijing 100871, China}
\email{geewey\_young@pku.edu.cn}
\keywords{Supercritical wave equation, Strichartz estimates, scattering, blow-up, profile decomposition}
\subjclass[2010]{Primary 35L71; Secondary 35B40, 35B44}
\newcommand{\dotW}{\dot{\mathcal{W}}^{1,m}}
\newcommand{\AAA}{\mathcal{A}}
\newcommand{\HHH}{\mathcal{H}}
\newcommand{\JJJ}{\mathcal{J}}
\newcommand{\LLL}{\mathcal{L}}
\newcommand{\NNN}{\mathcal{N}}
\newcommand{\TTT}{\mathcal{T}}
\newcommand{\mfkE}{\mathfrak{E}}
\newcommand{\mfktU}{\widetilde{\mathfrak{U}}}
\newcommand{\mfkR}{\mathfrak{R}}
\newcommand{\indic}{1\!\!1}
\newcommand{\tg}{\tilde{g}}
\newcommand{\tu}{\tilde{u}}
\newcommand{\EMPH}[1]{\medskip\noindent\textit{#1}.}
\newcommand{\ds}{\displaystyle}
\newcommand{\RR}{\mathbb{R}}
\newcommand{\NN}{\mathbb{N}}
\newcommand{\tU}{\widetilde{U}}
\DeclareMathOperator{\rad}{rad}
\DeclareMathOperator{\esssupp}{ess\,supp}
\begin{document}
\begin{abstract}
This work concerns linear and nonlinear wave equations in three space dimensions in a radial setting.
We first prove new Strichartz estimates for the linear equation, in weighted Sobolev spaces which were introduced in a preceding article of Tristan Roy and the first author. These spaces are not Hilbert space. However they are local, thus adapted to finite speed of propagation, and related to a conservation law of the linear wave equation. We also construct the adapted profile decomposition. 
Our main motivation is to study the radial semilinear wave equation in three space dimensions with a power-like nonlinearity which is greater than cubic, and not quintic. We prove that the equation is locally well-posed in the scale-invariant weighted Sobolev space mentioned above, and that the norm of any non-scattering solution goes to infinity at the maximal time of existence. This gives a refinement on known results on energy-subcritical and energy-supercritical wave equation, with a unified proof. 
\end{abstract}

\maketitle

\section{Introduction}
\subsection{Motivation and background}
Consider the semilinear wave equation in $1+3$ dimensions
\begin{equation}
\label{NLW}
(\partial_{t}^{2}-\Delta)u=\iota |u|^{2m}u,
\end{equation}
with initial data
\begin{equation}
 \label{ID}
 u(0,x)= u_{0}(x),\;\partial_{t}u(0,x)=u_{1}(x),
\end{equation} 
where $x\in \R^3$ and $t\in \RR$. The parameters $m>1$ and $\iota\in \{\pm 1\}$ are fixed. The equation is \emph{focusing} when $\iota=1$ and \emph{defocusing} when $\iota=-1$. It has the following scaling invariance: if $u(t,x)$ is a solution of \eqref{NLW} and $\lambda>0$, then $\lambda^{\frac{1}{m}}u(\lambda t,\lambda x)$
is also a solution. It is well-posed in the scale invariant Sobolev space $\dot{\HHH}^{s_c}:=\dot{H}^{s_c}(\RR^3)\times \dot{H}^{s_c-1}(\RR^3)$, where $s_c=\frac 32-\frac 1m$ is the critical Sobolev exponent. Equation \eqref{NLW} is \emph{energy-subcritical} if $s_c<1$ (equivalently $m<2$), \emph{energy-critical} if $s_c=1$ ($m=2$) and \emph{energy-supercritical} if $s_c>1$ ($m>2$).\medskip

The dynamics of \eqref{NLW} depend in a crucial way on the value of $m$ and the sign of $\iota$.

The energy-critical case $m=2$ is particular. The conserved energy
\begin{equation*}
 E(\vec{u}(t))=\frac{1}{2}\int |\nabla u(t,x)|^2\,dx+\frac{1}{2}\int (\partial_tu(t,x))^2\,dx-\frac{\iota}{2m+2}\int |u(t,x)|^{2m+2}\,dx
\end{equation*} 
is well-defined in $\dot{\HHH}^{s_c}=\dot{\HHH}^1=\dot{H}^1\times L^2$. When the nonlinearity is defocusing, the conservation of the energy implies that all solutions are bounded in $\dot{\HHH}^1$. It was proved in the $90$s that all solutions are global and scatter to a linear solution in the energy space, i.e. that there exists a solution $u_{\rm L}$ of the linear wave equation:
\begin{equation}
\label{LW}
(\partial_{t}^{2}-\Delta)u_{\rm L}=0,\;(t,x)\in \mathbb R\times \mathbb R^{3},
\end{equation}
with initial data in $\dot{\HHH}^1$, such that 
\begin{equation}
 \label{scatteringH1}
\lim_{t\to+\infty} \left\|\vec{u}(t)-\vec{u}_{\rm L}(t)\right\|_{\dot{\HHH}^1}=0.
\end{equation} 
(see \cite{Grillakis90b, Grillakis92, GiSoVe92, ShSt93,ShSt94, Kapitanski94, GiVe95, Nakanishi99b, BaSh98}). In the focusing case, there exist solutions that do not scatter. Indeed, there exist solutions of \eqref{NLW} that blow up in finite time with a \emph{type I} behavior, \emph{i.e.} such that:
\begin{equation*}
 \lim_{t\to T_+(u)}\|\vec{u}(t)\|_{\dot{\HHH}^1}=+\infty,
\end{equation*} 
where $T_+(u)$ is the maximal time of existence of $u$. Furthermore, the equation also admits stationary solutions and more generally traveling waves. It was proved in \cite{DuKeMe13} that any radial solution that does not scatter and is not a type I blow-up solution decouples asymptotically as a sum of rescaled stationary solutions and a dispersive term. This includes global non-scattering solutions (see \cite{KrSc07,DoKr13}, and also \cite{MaMe16,Jendrej16P} in higher space dimensions for examples of such solutions) and solutions that blow up in finite time but remain bounded in the energy space, called \emph{type II blow-up} solutions  (see e.g. \cite{KrScTa09,KrSc14} and, in higher dimensions \cite{HiRa12,Jendrej17}).
\medskip

The case $m\neq 2$ is quite different. It is known that stationary solutions do not exist in the critical Sobolev space, even for focusing nonlinearity (see e.g. \cite{JosephLundgren72}, \cite[Theorem 2]{Farina07}), and it is conjectured that any solution that does not satisfy:
\begin{equation}
\label{Norm_BlowUp}
\lim_{t\to T_+(u)} \|\vec{u}(t)\|_{\dot{\HHH}^{s_c}}=+\infty
\end{equation} 
is global and scatters to a linear solution for positive times. A slightly weaker version of this result was proved in many works, namely that 
if the solution does not scatter, then 
\begin{equation}
\label{Norm_BlowUpBis}
\limsup_{t\to T_+(u)} \|\vec{u}(t)\|_{\dot{\HHH}^{s_c}}=+\infty.
\end{equation} 
See \cite{KeMe11,DuKeMe12c} (radial case, $m>2$), \cite{Shen13} (radial case, $1<m<2$, see also \cite{Rodriguez16P}), \cite{KiVi11} (defocusing nonradial case, $m>2$), \cite{DodsonLawrie15} (radial case, $m=1$), and also
\cite{KiStVi14} for the nonradial defocusing case, $1\leq m<2$, where \eqref{Norm_BlowUpBis} is proved for finite time blow-up solutions with initial data in the energy space. 

Note that none of the preceding works excludes the existence of a nonscattering solution of \eqref{NLW} such that 
$$\limsup_{t\to T_+(u)}\|\vec{u}(t)\|_{\dot{\HHH}^{s_c}}=+\infty\text{ and }\liminf_{t\to T_+(u)}\|\vec{u}(t)\|_{\dot{\HHH}^{s_c}}<\infty.$$
In \cite{DuRoy15P}, this type of solution was ruled out in the case $m>2$: for any radial nonscattering solution of the equation, the critical Sobolev norm goes to infinity as $t\to T_+(u)$. 
\medskip

It is interesting to compare the theorems cited above with analogous ones for other equations, and in particular for the nonlinear Schr\"odinger equation:
\begin{equation}
\label{NLS}
 i\partial_t v-\Delta v=\iota |v|^{2m}v,
\end{equation} 
 For the defocusing equation ($\iota=-1$), the fact that the bound of a critical norm implies scattering is known in the cubic case in three space dimensions (see \cite{KeMe10}) and in energy-supercritical cases in large space dimensions (see \cite{KiVi10b}). In \cite{MeRa08} Merle and Rapha\"el considered the focusing equation \eqref{NLS} with $\iota=1$ and an $L^2$ supercritical (i.e. pseudo-conformally supercritical), energy subcritical nonlinearity, that is $\frac{2}{3}<m<2$ when the space dimensions is three. This condition is the analogue of the condition $1<m<2$ (conformally supercritical and energy subcritical power) for the wave equation.
They proved that if $u$ is radial with initial data in the intersection of $\dot{H}^1$ and the critical Sobolev space, and if $T_+(v)$ is finite, then 
$$\|v(t)\|_{L^{3m}}\geq \frac{1}{C}|\log(T_+(v)-t)|^{\alpha},$$
for some constant $\alpha>0$. Note that in this case there exists global, bounded, nonscattering solution. The space $L^{3m}$ is scale-invariant and strictly larger than the critical Sobolev space.
Analogous results are known for Navier-Stokes equations (see \cite{EsSeSv03}, \cite{KenigKoch11}, \cite{Seregin12}, \cite{GaKoPl13} and \cite{GaKoPl16}). For example it is proved in \cite{Seregin12} that the scale invariant $L^3$ norm of a solution blowing-up in finite time goes to infinity at the blow-up time.
\medskip

Going back to equation \eqref{NLW} with $m\neq 2$, many questions remain open:
\begin{itemize}
 \item Is it true that all non-scattering solution of \eqref{NLW} satisfy \eqref{Norm_BlowUp} in the nonradial case, or if $1<m<2$?
 \item Can one lower the regularity of the scale-invariant norm used in \eqref{Norm_BlowUp}, as in the case of nonlinear Schr\"odinger and Navier-Stokes equations?
 \item Is it possible to give an explicit lower-bound of the critical norm, in the spirit of the article of Merle and Rapha\"el \cite{MeRa08}?
\end{itemize}
In this article, we give a partial answer to the first two questions in the radial case.
This is based on a new well-posedness theory for equation \eqref{NLW}, in a  scale invariant weighted Sobolev space $\LLL^m$ which is not Hilbertian, but is related to a conserved quantity of the linear wave equation and is compatible with the finite speed of propagation.

\subsection{Strichartz estimates and local well-posedness}
Consider the following norm for radial functions $(u_0,u_1)$ on $\RR^3$:
$$\|(u_0,u_1)\|_{\LLL^m}=\left(\int_{0}^{+\infty}\left( |r\partial_r u_0|^m+|r u_1|^m\right)\,dr\right)^{\frac{1}{m}},$$
and define the space $\LLL^m$ as the closure of radial, smooth, compactly supported functions for this norm. Note that $\LLL^2$ is exactly\footnote{In all the article, the index $\rad$ denotes the subspace of radial elements of a given space of distributions on $\RR^3$} $\dot{\HHH}^1_{\rad}$. The $\LLL^m$ norm was introduced in \cite{DuRoy15P}, in the case $m>2$, as a scale-invariant substitute to the energy norm $\dot{H}^1\times L^2$ norm. Let us mention that $\dot{\HHH}^{s_c}_{\rad}\subset \LLL^m$ if $m>2$, and $\LLL^m\subset \dot{\HHH}^{s_c}_{\rad}$ if $1<m<2$ (see Proposition \ref{P:prelim} below). It was observed in \cite{DuRoy15P} that the $\LLL^m$ norm is almost conserved for solutions of the linear wave equation: we will indeed introduce in Section \ref{S:LWP} a \emph{conserved} quantity (the generalized energy) which is equivalent to this norm. We first prove Strichartz estimates for the linear wave equation. If $I$ is a real interval, we denote by $S(I)$ the space defined by the norm:
$$\|f\|_{S(I)}=\left(\int_{I} \left(  \int_{0}^{+\infty} |f(t,r)|^{(2m+1)m}r^mdr\right)^{\frac{1}{m}}dt\right)^{\frac{1}{2m+1}}.$$
\begin{maintheo}
\label{T:Strichartz}
 Let $v$ be a solution of the linear wave equation
 $$ \partial_t^2v-\Delta v=0,\quad (v,\partial_tv)_{\restriction t=0}=(v_0,v_1)\in \LLL^m.$$
 Then $v\in S(\R)$ and
$$\|v\|_{S(\R)}\leq C\|(v_0,v_1)\|_{\LLL^m}.$$
\end{maintheo}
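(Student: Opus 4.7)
The plan is to reduce the problem to a one-dimensional estimate via the classical substitution $w(t,r):=rv(t,r)$. Since $v$ is radial in three space dimensions, $w$ satisfies the 1D wave equation $\partial_t^2 w - \partial_r^2 w = 0$ on the half-line $r>0$ with the Dirichlet condition $w(t,0)=0$ and initial data $w_0 = rv_0$, $w_1 = rv_1$. Extending $w_0$ and $w_1$ oddly to $\R$ and applying d'Alembert's formula, I obtain the representation
$$w(t,r) = \Phi(t+r) - \Phi(t-r),$$
where $\Phi$ depends linearly on the data.

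\noindent\textbf{Equivalence of norms on the data.} Next I would verify that $\|(v_0,v_1)\|_{\LLL^m}$ is equivalent to $\|\Phi'\|_{L^m(\R)}$. The identity $r\partial_r v_0 = \partial_r w_0 - w_0/r$, combined with Hardy's inequality on $(0,\infty)$ (which applies since $w_0(0)=0$), gives $\|(v_0,v_1)\|_{\LLL^m} \simeq \|\partial_r w_0\|_{L^m(0,\infty)} + \|w_1\|_{L^m(0,\infty)}$. From the d'Alembert formula one reads $2\Phi'(\pm r) = \partial_r w_0(r) \pm w_1(r)$ for $r>0$, so this is in turn equivalent to $\|\Phi'\|_{L^m(\R)}$.

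\noindent\textbf{The key 1D weighted estimate.} From the representation above,
$$v(t,r) = \frac{1}{r}\int_{-r}^{r} \Phi'(t+\sigma)\,d\sigma,$$
so $v$ is displayed as a time-averaging of $\Phi'$. The theorem then reduces to the weighted mixed-norm inequality
$$\left\|\frac{1}{r}\int_{-r}^{r} f(t+\sigma)\,d\sigma\right\|_{L^{2m+1}_t L^{(2m+1)m}(r^m dr)} \le C\,\|f\|_{L^m(\R)},\qquad f=\Phi',$$
whose exponents are compatible with the natural scaling. I would prove it by dominating the averaging operator by the Hardy-Littlewood maximal function $Mf(t)$ in the small-$r$ regime, using a Hölder or fractional-integration bound for large $r$, and combining via a dyadic decomposition in $r$ followed by Fubini.

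\noindent\textbf{Main obstacle.} The hard part will be precisely this weighted mixed-norm estimate. Because $\LLL^m$ is not Hilbert, no $TT^*$/Fourier argument is available, and the inequality is critical in scaling, so endpoint issues of Hardy-Littlewood-Sobolev type may appear. Obtaining the bound with the sharp weight $r^m$ uniformly at $r=0$ and $r=\infty$ will likely require careful real-variable techniques (Muckenhoupt weights, or a direct Calder\'on-Zygmund-style splitting in the $r$ variable). As a sanity check, in the case $m=2$ the statement reduces to the classical radial $L^5_t L^{10}_x$ Strichartz estimate in $\R^3$.
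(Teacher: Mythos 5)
Your reduction to the representation $v(t,r)=\frac1r\bigl(\Phi(t+r)-\Phi(t-r)\bigr)=\frac{1}{2r}\int_{t-r}^{t+r}2\Phi'(s)\,ds$ and the norm equivalence $\|(v_0,v_1)\|_{\LLL^m}\approx\|\Phi'\|_{L^m(\R)}$ are exactly the paper's starting point (this is the formula \eqref{2}--\eqref{3} and the equivalence with the modified energy $E_m$ in Proposition \ref{P:prelimLW}). The gap is in the step you yourself flag as the ``main obstacle'': the tools you propose for the mixed-norm inequality cannot close it, because the estimate is \emph{exactly} critical in the dyadic parameter. Concretely, on a shell $r\sim 2^k$ the operator is controlled by the averaging operator $A_kf(t)=2^{-k}\int_{|s-t|\leq 2^{k+1}}|f|$, the weight contributes a factor $2^{k(m+1)/((2m+1)m)}$ to the inner norm, and interpolating the trivial bounds $\|A_kf\|_{L^m_t}\lesssim\|f\|_{L^m}$ and $\|A_kf\|_{L^\infty_t}\lesssim 2^{-k/m}\|f\|_{L^m}$ gives $\|A_kf\|_{L^{2m+1}_t}\lesssim 2^{-k(m+1)/(m(2m+1))}\|f\|_{L^m}$. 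The two exponents cancel identically, so every dyadic shell contributes $O(\|f\|_{L^m})$ with no decay in $k$, and the sum over $k\in\Z$ diverges. Pointwise domination by the maximal function, H\"older for large $r$, and Fubini provide no almost-orthogonality between shells, so this route stalls at the endpoint.

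The paper circumvents this with a two-step interpolation rather than a direct real-variable attack on the mixed norm. First (Proposition \ref{prop:linear-symmetric-estimate}) it proves a \emph{symmetric} space-time Lebesgue estimate $\bigl(\iint |\mathcal TG|^{\alpha m}r^{\alpha-2}\,dr\,dt\bigr)^{1/(\alpha m)}\lesssim\|G\|_{L^m}$ by Marcinkiewicz interpolation between the trivial $L^\infty\to L^\infty$ bound and a weak-type bound $L^1\to L^\alpha_w(r^{\alpha-2}dr\,dt)$; the latter is where the maximal function enters, via the observation that on the super-level set one has both $r\lesssim\|G\|_{L^1}/\lambda$ and $\mathcal MG(t)>\lambda$, so the weak $(1,1)$ bound for $\mathcal M$ saves the endpoint that your dyadic sum loses. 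Second, since this symmetric norm has the wrong weight and no mixed structure, the paper recovers the $S(\R)$ norm by complex interpolation between this estimate (with data in $\LLL^{m^*}$, $m^*=2m$ or $(m+1)/2$) and the classical Ginibre--Velo Strichartz estimates for $\dot H^1\times L^2$ data, i.e.\ the case $m=2$ you mention only as a sanity check actually enters the proof as one endpoint of the interpolation. If you want to complete your argument, you either need to import that second interpolation with the energy-space Strichartz estimates, or find a genuine endpoint device (e.g.\ a weak-type mixed-norm bound at $L^1$) to replace the non-summable dyadic decomposition.
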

Note that Theorem \ref{T:Strichartz} generalizes, in the radial case, the $L^5L^{10}$ Strichartz/Sobolev estimate for finite-energy solutions of the linear wave equation to the case $m\neq 2$. Let us mention that we prove more general Strichartz estimates, including estimates for the nonhomogeneous wave equation  (see Subsection \ref{SS:Strichartz} for the details). As a consequence, we obtain local well-posedness in $\LLL^m$ for equation \eqref{NLW}:
\begin{maintheo}
 \label{T:LWP}
 For $m>1$,
 equation \eqref{NLW} is locally well-posed in $\LLL^m$. For any initial data $(u_0,u_1)$ in $\LLL^m$, there exists a unique solution $u$ of \eqref{NLW}, \eqref{ID} defined on a ma\-xi\-mal interval of existence $I_{\max}(u)=(T_-(u),T_+(u))$ such that $\vec{u}\in C^0(I_{\max}(u),\LLL^m)$ and for all compact interval $J\Subset I_{\max}(u)$,  $u\in S(J)$. Furthermore,
 $$ T_+(u)<\infty\Longrightarrow \|u\|_{S([0,T_+(u)))}=+\infty.$$
\end{maintheo}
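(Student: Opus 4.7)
The strategy is a standard Picard contraction scheme in the non-Hilbertian space $\LLL^m$, based on the linear Strichartz estimate of Theorem \ref{T:Strichartz} together with its inhomogeneous counterpart announced in Subsection \ref{SS:Strichartz}. I would look for a fixed point of the Duhamel operator
$$\Phi(u)(t) = v_{\rm L}(t) + \iota \int_0^t \frac{\sin((t-s)\sqrt{-\Delta})}{\sqrt{-\Delta}}\bigl(|u(s)|^{2m}u(s)\bigr)\,ds,$$
where $v_{\rm L}$ is the free evolution of $(u_0,u_1)$, in the complete metric space
$$X_T = \bigl\{u : \vec u\in C^0([0,T], \LLL^m),\ \|u\|_{S([0,T])}\leq \eta\bigr\},$$
endowed with the $S([0,T])$ distance, for a small $\eta>0$ and short $T>0$ to be chosen.

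The key nonlinear input is the H\"older computation
$$\bigl\||u|^{2m}u\bigr\|_{N([0,T])}\lesssim \|u\|_{S([0,T])}^{2m+1}, \qquad \bigl\||u|^{2m}u-|v|^{2m}v\bigr\|_{N([0,T])}\lesssim \bigl(\|u\|_{S}^{2m}+\|v\|_{S}^{2m}\bigr)\|u-v\|_{S},$$
where $N$ is the predual norm to $S$ dictated by the inhomogeneous Strichartz estimate. The exponents $((2m+1)m, 2m+1)$ in the definition of $S(I)$ are tailored precisely so that the degree-$(2m+1)$ nonlinearity lands in $N$ via a direct H\"older computation with weight $r^m\,dr$. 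Combining this with the (in)homogeneous Strichartz inequalities yields
$$\|\Phi(u)\|_{S([0,T])} + \|\overrightarrow{\Phi(u)}\|_{L^\infty([0,T],\LLL^m)} \leq C\|(u_0,u_1)\|_{\LLL^m} + C\|u\|_{S([0,T])}^{2m+1},$$
and a similar bound for $\Phi(u)-\Phi(v)$. Since $\|v_{\rm L}\|_{S([0,T])}\to 0$ as $T\to 0$ by dominated convergence (Theorem \ref{T:Strichartz} ensures $v_{\rm L}\in S(\RR)$), one can choose $T$ and $\eta$ so that $\Phi$ maps $X_T$ into itself and is a $\tfrac12$-Lipschitz contraction, producing a unique local solution. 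The maximal interval $I_{\max}(u)=(T_-(u),T_+(u))$ is then obtained by a standard union/uniqueness argument.

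For the blow-up criterion, I would argue by contradiction: if $T_+(u)<\infty$ and $\|u\|_{S([0,T_+(u)))}<\infty$, then $\|u\|_{S([t,T_+(u)))}\to 0$ as $t\nearrow T_+(u)$. Applying the inhomogeneous Strichartz estimate and the H\"older bound to the Duhamel representation of $\vec u(t_1)-\vec u(t_2)$ shows that $\vec u(t)$ is Cauchy in $\LLL^m$ as $t\nearrow T_+(u)$; the limit $(v_0,v_1)\in \LLL^m$ can be used as new initial data at $t=T_+(u)$, and the already established local well-posedness yields a solution on an interval past $T_+(u)$ that matches $u$ by uniqueness, contradicting the maximality of $T_+(u)$.

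The step I expect to require the most care is the compatibility of the dual norm $N$ and the H\"older estimate with the weighted, non-Hilbertian structure of $\LLL^m$: this is where the specific exponents in $S$ and the precise form of the inhomogeneous Strichartz inequality of Subsection \ref{SS:Strichartz} must mesh exactly. A secondary subtlety is the propagation of strong continuity in $\LLL^m$, which is less automatic than in a Hilbert setting, since $\LLL^m$ is defined by closure of smooth compactly supported radial data and the usual weak-continuity-plus-norm-continuity argument is not immediately available.
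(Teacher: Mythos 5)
Your proposal is correct and follows essentially the same route as the paper: a contraction argument for the Duhamel map in a small ball for the $S$-norm, using the homogeneous Strichartz estimate \eqref{27} to make $\|S_{\rm L}(t)(u_0,u_1)\|_{S(I)}$ small on a short interval, the inhomogeneous estimate \eqref{27-b} with dual norm $L^1_tL^m_x(r^m\,dr)$ plus H\"older to close the nonlinear estimate, and the energy estimate \eqref{energy-est} to recover $\vec u\in C^0(I,\LLL^m)$; the maximality and blow-up criterion are then obtained by the standard arguments you describe. The only cosmetic difference is that the paper contracts in the ball defined by the $S$-norm alone and recovers the $C^0(\LLL^m)$ bound a posteriori, rather than building it into the metric space.
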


We obtain Theorem \ref{T:Strichartz} and the other generalized Strichartz estimates of Subsection \ref{SS:Strichartz} by interpolating between the known generalized Strichartz estimates of Ginibre and Velo \cite{GiVe95} (see also \cite{LiSo95})  in correspondence to the case $m=2$, and Strichartz-type estimates obtained by a new method, based on the continuity of the Hardy-Littlewood maximal function from $L^1$ to $L^1_w$ (see Subsection \ref{SS:Strichartz}). 

We also construct a profile decomposition for sequences of functions that are bounded in $\LLL^m$, which is adapted to equation \eqref{NLW}, in the spirit of the one of Bahouri-G\'erard \cite{BaGe99} which corresponds to the case $m=2$. This construction is based on a refined Sobolev embedding due to Chamorro \cite{Chamorro11}. The fact that $\LLL^m$ is not a Hilbert space yields a new technical difficulty, namely that the usual Pythagorean expansion of the norm does not seem to be valid and must be replaced by a weaker statement, closer to Bessel's inequality than to Pythagorean Theorem.
We refer to Solimini \cite{Solimini95} and Jaffard \cite{Jaffard99} for other non-Hilbertian profile decompositions where this type of inequalities also appears.

The definition of the space $\LLL^m$ does not involve any fractional derivative and is technically easier to handle than the space $\dot{\HHH}^{s_c}$ with $m\neq 2$, where the latter are all defined by norms that are not compatible with finite speed of propagation. We hope that the Strichartz estimates and profile decomposition proved in this article will find applications for nonlinear wave equations apart from \eqref{NLW}.

\subsection{Blow-up of the critical Sobolev norm for the nonlinear equation}

Our second result is that the dichotomy proved in \cite{DuRoy15P} remains valid in $\LLL^m$, as long as $m\neq 2$:
\begin{maintheo}
\label{T:scattering}
Assume $m>1$ and $m\neq 2$.
Let $u$ be a radial solution of \eqref{NLW}, \eqref{ID}, with $(u_0,u_1)\in \LLL^m$ and maximal positive time of existence $T_+$. Then one of the following holds:
\begin{enumerate}
 \item $\ds \lim_{t\to T_+(u)} \|\vec{u}(t)\|_{\LLL^m}=+\infty$ or
\item $T_+(u)=+\infty$ and $u$ scatters forward in time to a linear solution, i.e. there exists a solution $u_{\rm L}$ of \eqref{LW}, with initial data $\LLL^m$, such that
$$\lim_{t\to+\infty}\left\|\vec{u}(t)-\vec{u}_{\rm L}(t)\right\|_{\LLL^m}=0.$$
\end{enumerate}
\end{maintheo}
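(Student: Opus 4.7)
The plan is to run a Kenig--Merle concentration-compactness / rigidity argument, entirely in the $\LLL^m$ framework developed above. Suppose for contradiction that $u$ is a radial solution of \eqref{NLW} that does not scatter forward in time and for which $\liminf_{t\to T_+(u)}\|\vec{u}(t)\|_{\LLL^m}<\infty$. Pick a sequence $t_n\uparrow T_+(u)$ with $\|\vec{u}(t_n)\|_{\LLL^m}\leq M$ bounded; by the blow-up criterion of Theorem \ref{T:LWP} the forward Strichartz norms $\|u\|_{S([t_n,T_+(u)))}$ diverge.

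First I would minimize over $M$. Let $M_\star$ be the infimum of such $M$. Applying the $\LLL^m$-profile decomposition announced in the introduction to the bounded sequence $\vec{u}(t_n)$, and combining it with the nonlinear perturbation theory built on the Strichartz estimates of Theorem \ref{T:Strichartz}, one extracts a \emph{critical element}: a nonzero radial, non-scattering solution $u_c$ of \eqref{NLW} with $\sup_t\|\vec{u}_c(t)\|_{\LLL^m}\leq M_\star$ whose trajectory is precompact in $\LLL^m$ modulo the natural scaling symmetry of \eqref{NLW}.

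The rigidity step then shows $u_c\equiv 0$, giving the contradiction. Following the schemes of \cite{DuKeMe12c,Shen13,DuRoy15P}, finite speed of propagation together with the equivalence between the $\LLL^m$-norm and the conserved generalized energy of Section \ref{S:LWP} control the scaling parameter $\lambda(t)$ along the compact trajectory and rule out concentration at a point or spreading to infinity. A virial / self-similar argument then forces $u_c$ to arise from a radial stationary solution $W\in \LLL^m$ of $-\Delta W=\iota|W|^{2m}W$; since $m\neq 2$, the classification of \cite{JosephLundgren72,Farina07} forbids any such nontrivial $W$, so $u_c\equiv 0$, contradicting the non-scattering property. Since this final nonexistence statement treats the energy-subcritical range $1<m<2$ and the energy-supercritical range $m>2$ on equal footing, the argument yields the unified proof promised in the abstract.

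The main obstacle is the extraction of the critical element. Since $\LLL^m$ is not a Hilbert space, the profile decomposition only yields a Bessel-type inequality for the $\LLL^m$-norm rather than the usual Pythagorean expansion, so one cannot directly compare the sum of the profile norms with the norm of the original sequence. The remedy is to combine this weak inequality with the genuine Pythagorean property of the conserved generalized energy (on profiles whose supports are decoupled by finite speed of propagation and whose scales are asymptotically well separated), which is precisely where the locality of the $\LLL^m$-norm, in contrast with $\dot{\HHH}^{s_c}$, becomes essential.
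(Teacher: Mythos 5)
There is a genuine gap at the very first step, the extraction of the critical element. The Kenig--Merle machinery produces a nonzero solution whose trajectory is precompact modulo symmetries only under a \emph{uniform-in-time} bound $\sup_{t}\|\vec u(t)\|_{\LLL^m}\le M$ on the class of non-scattering solutions over which one minimizes. The hypothesis of Theorem \ref{T:scattering} is only $\liminf_{t\to T_+}\|\vec u(t)\|_{\LLL^m}<\infty$, i.e.\ boundedness along one sequence of times $t_n$; nothing prevents the norm from being unbounded along other sequences, and ruling out precisely this oscillatory behaviour (left open by the earlier results of type \eqref{Norm_BlowUpBis} in \cite{KeMe11,DuKeMe12c,Shen13}) is the entire content of the theorem. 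Your minimization over $M$ therefore does not get off the ground: the class ``non-scattering with $\liminf\le M$'' is not stable under the limiting procedure that is supposed to produce $u_c$, and no compactness of the trajectory of $u_c$ can be established from bounds at a sequence of times only. The paper avoids this entirely: it works directly with the sequence $t_n$, extracts the dispersive component ($v_{\rm L}$ of Proposition \ref{P:radiation} in the global case, the regular part $v$ of Proposition \ref{P:regular} in the finite-time case), performs the profile decomposition of $\vec u(t_n)-\vec v_{\rm L}(t_n)$, and reaches a contradiction in each of three cases by a channels-of-energy argument, the key nonlinear input being the exterior-energy lower bound of Proposition \ref{P:BB1} valid for \emph{any} nonzero datum --- no minimality and no compact trajectory are ever used.

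The rigidity step as you describe it would also fail. For $m\neq 2$ the virial identities involve $\|\nabla u\|_{L^2}$ and $\|u\|_{L^{2m+2}}$, which are not controlled by the $\LLL^m$ (or $\dot{\HHH}^{s_c}$) norm, so a virial or self-similar argument cannot be closed in the critical space; this is exactly why \cite{DuKeMe12c,Shen13,DuRoy15P} and the present paper replace it by the exterior-energy method. Moreover, the stationary objects that actually enter are not finite-$\LLL^m$ solutions $W$ of $-\Delta W=\iota|W|^{2m}W$ on all of $\RR^3$, but the singular solutions $Z_{\ell}$ of Proposition \ref{P:stat}, defined only for $r>R_{\ell}$ and behaving like $\ell/r$ at infinity. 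The rigidity (Proposition \ref{P:BB1}) consists in showing that a datum emitting no energy into exterior light cones must coincide with some $Z_{\ell}$ outside every sufficiently large ball, which is then contradicted by $Z_{\ell}\notin\LLL^m$ (non-membership in $L^{3m}$ in the focusing case, blow-up at $r=R_{\ell}>0$ in the defocusing case). Your appeal to \cite{JosephLundgren72,Farina07} addresses only the easier nonexistence of regular stationary solutions and does not substitute for this argument.
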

In the energy-supercritical case $m>2$, Theorem \ref{T:scattering} improves the result of \cite{DuRoy15P} since $\dot{\HHH}^{s_c}$ is continuously embedded into $\LLL^m$. In the case $1<m<2$, $\LLL^m$ is continuously embedded into $\dot{\HHH}^{s_c}$ and Theorem \ref{T:scattering} is not strictly stronger that the result of \cite{Shen13}. However, Theorem \ref{T:scattering} is also new, since it says that as least some scale invariant norm of $u$ must go to infinity as $t$ goes to $T_+(u)$. It is very natural to conjecture that the $\dot{\HHH}^{s_c}$ norm of the solution also goes to infinity but this is still an open question.

Once the Strichartz estimates, well-posed theory and profile decomposition in $\LLL^m$ are known, the proof of Theorem \ref{T:scattering} (sketched in Sections \ref{S:exterior}, \ref{S:dispersive} and \ref{S:endofproof})  is very close to the proof of the corresponding result in \cite{DuRoy15P}, with some simplifications due to the use of the space $\LLL^m$ instead of $\dot{\HHH}^{s_c}$ in all the proof. As in \cite{DuRoy15P}, we use the \emph{channels of energy method} initiated in \cite{DuKeMe11a}, and the main ingredient of the proof is an exterior energy estimate for radial solutions of the linear wave equation for the $\LLL^m$-energy, which generalizes the exterior energy estimate used in \cite{DuKeMe11a,DuKeMe12c,DuKeMe13}.

According to Theorem \ref{T:scattering}, there are three potential types of dynamics for equation \eqref{NLW}: scattering, finite time blow-up solutions such that the critical norm goes to infinity at the blow-up time,  and global solutions such that the critical norm goes to infinity as $t$ goes to infinity. Only two of these dynamics are known to exist: scattering (for both focusing and defocusing nonlinearities) and finite time blow-up (for focusing nonlinearity only). Indeed, in the focusing case, it is possible to construct blow-up solutions with smooth, compactly supported, initial data using finite speed of propagation and the ordinary differential equation  $y''=|y|^{2m}y$. Another type of blow-up solution was constructed by C. Collot in \cite{Collot14P}, for some energy-supercritical nonlinearity in large space dimension: in this case the scale-invariant Sobolev norms blow up logarithmically.
\medskip

It is natural to conjecture that all solutions in $\LLL^m$ are global in the defocusing case. For $m<2$, this follows from conservation of the energy if the data is assumed to be in $\dot{\HHH}^1$, and only the case of low-regularity solution is open. For supercritical nonlinearity $m>2$, it is a very delicate issue even for smooth initial data, as the recent construction by T.~Tao of a a finite time blow-up solution for a defocusing \emph{system}\footnote{The unknown $u$ is $\RR^{40}$ valued} of energy supercritical wave equation suggests \cite{TaoBUp2016}.
\medskip

The existence of global solutions blowing-up at infinity with initial data in $\LLL^m$ (or $\dot{\HHH}^{s_c}$) is also completely open. We refer to \cite{KrSc14P} for the construction of global, smooth, non-scattering solutions in the case $m=3$. The initial data of these solutions do not belong
either to the critical Sobolev spaces $\dot{\HHH}^{7/6}$ or to the $\LLL^3$ space (but are, however, in all spaces $\dot{\HHH}^{s}$, $s>7/6$). This construction and Theorem \ref{T:scattering} seems to suggest that any global solution with initial data decaying sufficiently at infinity actually scatters, but we do not know of any rigorous result in this direction.

Let us finally mention the recent preprint of Beceanu and Soffer \cite{BeceanuSoffer16P} on equation \eqref{NLW} with supercritical nonlinearity $m>2$ where global existence is proved for a class of outgoing initial data.
\medskip

The outline of the paper is as follows: in Section \ref{S:LWP}, we prove the Strichartz estimate for the linear wave equation and deduce the Cauchy theory for equation \eqref{NLW}. In Section \ref{S:profiles}, we construct the profile decomposition. In Section \ref{S:exterior}, we prove the exterior energy property for nonzero solutions of equation \eqref{NLW} which is the core of the proof of Theorem \ref{T:scattering}. In Section \ref{S:dispersive}, we introduce the radiation term (i.e. the disperive part) of a solution which is bounded in the critical space for a sequence of times. In Section \ref{S:endofproof}, we conclude the proof.

\subsection*{Notations}
If $a$ and $b$ are two positive quatities, we write $a\lesssim b$ when there exists a constant $C>0$ such that $a\leq C b$ where the constant will be clear from the context. When the constant depends on some other quantity $M$, we emphasize the dependence by writing $a\lesssim_M b$.
We will write $a\approx b$ when we have both $a\lesssim b$ and $b\lesssim a$. We will write $a\ll b$ (resp. $a\gg b$) if there exists a sufficiently  large constant
$C>0$ such that $a\leq C b$ (resp. $a\geq Cb$).
We use $\mathcal{S}(\mathbb{R}^d)$
to denote the Schwartz class of functions on the Euclidean space $\mathbb{R}^d$.
\\
\\
If $f$ is a radial function depending on $t$ and $r:=|x|$, let
\begin{equation*}
\vec{f}:= (f,\partial_{t} f)\quad
\text{and}\quad
[f]_\pm(t,r)=(\partial_r\pm\partial_t)(rf).
\end{equation*}
Given $s \geq 0$ and $n$ a positive integer, we define
\begin{equation*}
\dot{\mathcal{H}}^{s}(\mathbb{R}^{n}) := \dot{H}^{s} (\mathbb{R}^{n}) \times \dot{H}^{s-1}(\mathbb{R}^{n}),
\end{equation*}
where $\dot{H}^{s}$ denotes the standard homogeneous Sobolev space. 
We let $L^p_t(I,L^q_x)$ be the space of measurable functions $f$ on $I\times \RR^3$ such that
$$ \|f\|_{L^p_t(I,L^q_x)}=\left(\int_I \left( \int_{\mathbb{R}^3}|f(t,x)|^qdx\right)^{\frac{p}{q}}\,dt  \right)^{1/p}<\infty.$$ 
Unless specified, the functional spaces ($L^p$, $\dot{H}^s$, etc...) are spaces of functions or distributions on $\R^3$ with the Lebesgue measure. On a measurable space $(\Omega,d\mu)$ where $\mu$ is positive,
the weak $L^q$ quasi-norm of a function $f$ is defined as
\[\|f\|_{L^q_w}:=\sup_{\lambda>0}\lambda\, \left(\mu\{x\in\Omega:|f(x)|>\lambda\}\right)^\frac{1}{q}\]
We shall also use the  weighted Lebesgue norm $L^q(\mathbb{R}^n,\omega)$ defined as
\[\|f\|_{L^q(\mathbb{R}^n,\omega)}
:=\left(\int_{\mathbb{R}^n}
|f(x)|^q\omega(x)dx
\right)^{\frac{1}{q}}\]
for some measurable function $\omega(x)$ as a weight.
For $q>1$, we use $q'=\frac{q}{q-1}$
to mean its Lebesgue conjugate. 
\\
\\
We denote by $\TTT_{R}$ the operator
\begin{equation}
\begin{array}{ll}
f & \mapsto \TTT_{R}(f) :=
\left\{
\begin{array}{ll}
f(R), \, |x| \leq R \\
f(|x|), \,  |x| \geq R.
\end{array}
\right.
\end{array}
\nonumber
\end{equation}
\\
Let $S_{\rm L}(t)$ denote the linear propagator, \emph{i.e.}
\begin{equation*}
S_{\rm L}(t)(w_0,w_1) :=
\cos{(tD)}w_0+\frac{\sin{(tD)}}{D}w_1,\quad D=\sqrt{-\triangle}.
\end{equation*}
If $u$ is a function of $t$ and $r$, we will denote by $F(\partial_{r,t}u)$ the sum $F(\partial_ru)+F(\partial_tu)$ e.g. $|\partial_{t,r}u|^m:=|\partial_tu|^m+|\partial_ru|^m$.

\subsection*{Acknowledgment}  The first author would like to thank Patrick G\'erard for poin\-ting out references \cite{Solimini95} and  \cite{Jaffard99}.

\section{Strichartz estimates and local well-posedness}
\label{S:LWP}
\subsection{Preliminaries}
For $m>1$, we denote  $\dotW$ as the closure of $C_{0,\rad}^{\infty}$ for the norm $\|\cdot\|_{\dotW}$ defined by:
$$\|\varphi\|_{\dotW}:=\left(\int_0^{+\infty} |\partial_r \varphi(r)|^mr^m\,dr  \right)^{1/m}.$$
\begin{proposition}\footnote{The proof is given in the appendix}
	\label{prop:fs}
	We have $f\in {\dotW}$ if and only if 
	$f(r)\in C^{0}_{\rm rad}\bigl((0,+\infty)\bigr)$ satisfies the conditions:
	\begin{equation}
	\label{eq:fs-1}
	\int_{0}^{+\infty}|r\partial_{r} f(r)|^{m}dr<+\infty\,,
	\end{equation}
	and 
	\begin{equation}
	\label{eq:fs-2}
	\lim_{r\rightarrow 0}r^{\frac1m}f(r)=
	\lim_{r\rightarrow \infty}r^{\frac1m}f(r)=0\,.
	\end{equation}
\end{proposition}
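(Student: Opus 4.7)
The plan is to prove both implications, with a Hardy-type estimate (consequence of Hölder's inequality with exponent $m' < \infty$) as the key ingredient in both directions.

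For the direction $(\Rightarrow)$, I take a Cauchy sequence $\varphi_n \in C^{\infty}_{0,\rad}$ with limit $f$ and exploit the identity $\varphi(r) - \varphi(R) = -\int_r^R (s\partial_s\varphi)/s\,ds$ together with $\int_r^R s^{-m'}ds \leq C r^{1-m'}$ to deduce
\begin{equation*}
r^{1/m}\,|\varphi(r) - \varphi(R)| \leq C \left(\int_r^R |s\partial_s\varphi|^m\,ds\right)^{1/m},\qquad 0<r<R.
\end{equation*}
Sending $R \to \infty$ and using the compact support of $\varphi_n$ yields a uniform Hardy bound, from which locally uniform convergence $\varphi_n \to f$ on $(0,\infty)$, continuity of $f$, and $r\partial_r f \in L^m(dr)$ follow (the $L^m$ limit of $r\partial_r\varphi_n$ is identified as $r\partial_r f$ by uniqueness of distributional limits, giving \eqref{eq:fs-1}). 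Passing to the limit in the display yields the same bound for $f$; then $R \to \infty$ gives $r^{1/m}|f(r)| \leq C\|s\partial_s f\|_{L^m([r,\infty))} \to 0$ as $r \to \infty$, while for $r \to 0$, fixing $R$ and letting $r \to 0$ with $r^{1/m}|f(R)| \to 0$ gives $\limsup_{r\to 0}r^{1/m}|f(r)| \leq C\|s\partial_s f\|_{L^m([0,R])}$; sending $R \to 0$ concludes \eqref{eq:fs-2}.

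For the direction $(\Leftarrow)$, I approximate $f$ by the truncation $f_\epsilon := \eta_\epsilon f$, where $\eta_\epsilon(r) := \eta(r/\epsilon)\bigl(1-\eta(\epsilon r)\bigr)$ and $\eta \in C^\infty(\R)$ equals $0$ on $(-\infty,1]$ and $1$ on $[2,\infty)$, so that $\eta_\epsilon$ is supported in $[\epsilon, 2/\epsilon]$ and equals $1$ on $[2\epsilon, 1/\epsilon]$. Writing $\partial_r(f - f_\epsilon) = (1-\eta_\epsilon)\partial_r f - f\,\partial_r\eta_\epsilon$, the first term's contribution to the $\dotW$-norm vanishes by dominated convergence (dominated by $|r\partial_r f|^m \in L^1$); the second, supported in $[\epsilon,2\epsilon]\cup[1/\epsilon,2/\epsilon]$ with $|r\partial_r\eta_\epsilon|\lesssim 1$, is bounded by $\int_{[\epsilon,2\epsilon]\cup[1/\epsilon,2/\epsilon]}|f|^m\,dr$, and the identity $|f(r)|^m = r^{-1}(r^{1/m}|f(r)|)^m$ combined with the logarithmic length $\log 2$ of each interval forces this contribution to $0$ by virtue of \eqref{eq:fs-2}. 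Since $f_\epsilon$ is continuous with compact support in $(0,\infty)$, a standard radial mollification at scale $\delta \ll \epsilon$ produces approximants in $C^{\infty}_{0,\rad}(\R^3)$ converging in $\dotW$ to $f_\epsilon$, the weight $r$ being bounded above and below on the support.

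The main obstacle is the asymptotic at $r = 0$ in direction $(\Rightarrow)$: the global Hardy bound yields only boundedness, not decay, at the origin, so one must localize the estimate to $[r,R]$ and exploit that $\|s\partial_s f\|_{L^m([0,R])} \to 0$ as $R \to 0$ -- which follows solely from $r\partial_r f \in L^m(dr)$ and dominated convergence, not from any finer structure of the approximating sequence. The same vanishing behavior reappears in the converse direction as the mechanism that tames the cutoff-derivative term near the endpoints of the transition region.
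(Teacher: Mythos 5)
Your proof is correct and follows essentially the same strategy as the paper's appendix: the H\"older/Hardy estimate $r^{1/m}|f(r)-f(R)|\lesssim \bigl(\int_r^R|s\partial_s f|^m\,ds\bigr)^{1/m}$ for the forward direction, and cutoffs at scales $\varepsilon$ and $1/\varepsilon$ combined with mollification for the converse. Your two minor reorganizations --- treating the limit at $r\to 0$ by fixing $R$ and invoking absolute continuity of $\int_0^R|s\partial_s f|^m\,ds$ instead of the paper's separate inequality involving $\partial_s(sf)$, and performing the truncation and the mollification in two separate steps so that the cross terms between the cutoff and the mollifier (the paper's terms \eqref{a8}--\eqref{a9}) never arise --- are both valid and, if anything, slightly streamline the argument.
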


We denote $\LLL^m$ as the closure of $\left(C_{0,\rad}^{\infty}\right)^2$ for the norm $\|\cdot\|_{\LLL^m}$ below
$$\left\|(u_0,u_1)\right\|_{\LLL^m}:=\|u_0\|_{\dotW}+\left(\int_0^{+\infty} |u_1(r)|^mr^m\,dr  \right)^{1/m}.$$
Then:
\begin{proposition}
 \label{P:prelim}
 \begin{enumerate}
  \item \label{P1} If $m>2$ and $(u_0,u_1)\in \dot{\HHH}^{s_c}$, then $(u_0,u_1)\in \LLL^m$ and 
  $$\|(u_0,u_1)\|_{\LLL^m}\lesssim \|(u_0,u_1)\|_{\dot{\HHH}^{s_c}}.$$
 \item \label{P1'} If $1<m<2$ and $(u_0,u_1)\in \LLL^m$ then $(u_0,u_1)\in \dot{\HHH}^{s_c}$ and 
  $$\|(u_0,u_1)\|_{\dot{\HHH}^{s_c}}\lesssim \|(u_0,u_1)\|_{\LLL^m}.$$
 \item \label{P2} If $u_0\in \dotW$, then $u_0\in L^{3m}(\R^3)$ and 
  $$\|u_0\|_{L^{3m}}\lesssim \|u_0\|_{\dotW}.$$
 \item \label{P4} If $u_0\in \dotW$, and $R>0$, then
 $$ R|u_0(R)|^m +\int_R^{+\infty} |\partial_r(ru_0)|^m\,dr\approx \int_R^{+\infty} |\partial_r u_0|^mr^m\,dr,$$
 where the implicit constant does not depend on $R$. 
\end{enumerate}
\end{proposition}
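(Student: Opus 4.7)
The plan is to establish (3) and (4) by one-dimensional weighted calculus, and then to deduce (1) and (2) by the substitution $w(r):=ru_0(r)$, extended as an odd function on $\RR$, which reduces the problem to a purely one-dimensional setting.

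For (3), my starting point is the pointwise radial bound $|u_0(r)|\le C\,r^{-1/m}\|u_0\|_{\dotW}$, obtained by applying H\"older's inequality with exponents $m,m'$ to the factorization $u_0'=(\rho u_0')\cdot\rho^{-1}$ in the representation $u_0(r)=-\int_r^\infty u_0'(\rho)\,d\rho$. Integration by parts (legitimate thanks to the vanishing conditions of Proposition~\ref{prop:fs}) gives $\int_0^\infty|u_0|^{3m}r^2\,dr\le m\int_0^\infty|u_0|^{3m-1}|u_0'|\,r^3\,dr$, and a further H\"older with the splitting $|u_0|^{3m-1}|u_0'|r^3=(r|u_0'|)\cdot(|u_0|^{3m-1}r^2)$ bounds the right-hand side by $C\|u_0\|_{\dotW}\bigl(\int|u_0|^{(3m-1)m'}r^{2m'}\,dr\bigr)^{1/m'}$. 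The algebraic identity $(3m-1)m'=3m+2m'$ together with the pointwise bound $|u_0|r\le Cr^{1/m'}\|u_0\|_{\dotW}$ then majorises the inner integral by $C\|u_0\|_{\dotW}^{2m'}\int|u_0|^{3m}r^2\,dr$, which allows one to solve for $\int|u_0|^{3m}r^2\,dr$ and conclude.

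For (4), set $g:=ru_0$ and let $A(R)$, $B(R)$ denote the right- and left-hand sides of the claimed equivalence. To prove $A(R)\lesssim B(R)$, write $u_0(r)=r^{-1}\bigl(g(R)+\int_R^r g'\,d\rho\bigr)$ for $r\ge R$: the first part integrates to a multiple of $R|u_0(R)|^m$, and the second is controlled by $\int_R^\infty|g'|^m\,dr$ via the classical Hardy inequality $\int_0^\infty r^{-m}\bigl(\int_0^r F\,d\rho\bigr)^m dr\le C_m\int_0^\infty F^m\,dr$ applied to $F=|g'|\indic_{[R,\infty)}$; combined with $|ru_0'|^m\lesssim|g'|^m+|u_0|^m$ this yields $A(R)\lesssim B(R)$. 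For the reverse direction, the pointwise bound of (3) at $r=R$ gives $R|u_0(R)|^m\lesssim A(R)$, while a direct integration-by-parts argument applied to $G(r):=\int_r^\infty|u_0'|\,d\rho$ produces $\int_R^\infty|u_0|^m\,dr\le m^m\int_R^\infty r^m|u_0'|^m\,dr=m^m A(R)$; inserting this into $|g'|^m\lesssim|u_0|^m+|ru_0'|^m$ gives $B(R)\lesssim A(R)$.

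For (1) and (2), I extend $w(r):=ru_0(r)$ and $w_1(r):=ru_1(r)$ to odd functions on $\RR$. Since the $3$D radial Fourier transform satisfies $|\xi|\widehat{u_0}(\xi)=c\int_0^\infty w(r)\sin(r|\xi|)\,dr$, Plancherel yields the isometric identification (up to constants)
\[\|u_0\|_{\dot H^s(\RR^3)}\approx\|w\|_{\dot H^s(\RR)}\qquad(s\in\RR),\]
and similarly for $u_1$ at regularity $s-1$. Sending $R\to 0$ in (4) (the boundary term $R|u_0(R)|^m$ vanishes by Proposition~\ref{prop:fs}), one obtains $\|u_0\|_{\dotW}^m\approx\int_0^\infty|\partial_r(ru_0)|^m\,dr=\tfrac12\|w'\|_{L^m(\RR)}^m$; likewise $\|u_1\|_{L^m(r^m dr)}\approx\|w_1\|_{L^m(\RR)}$. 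The embeddings then reduce to the one-dimensional Sobolev inclusion $\dot H^{1/2-1/m}(\RR)\hookrightarrow L^m(\RR)$, valid for $m>2$ (this gives (1), applied to both $w'$ and $w_1$), and to its dual $L^m(\RR)\hookrightarrow\dot H^{1/2-1/m}(\RR)$, valid for $1<m<2$ (this gives (2)). The most delicate point is the careful justification of the 3D/1D identification of the homogeneous Sobolev norms for all functions described by Proposition~\ref{prop:fs}; the Hardy calculations in (3)--(4), by contrast, are essentially one-line weighted-calculus steps.
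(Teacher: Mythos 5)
Your proposal is correct, and it is genuinely more self-contained than what the paper does: the paper proves only item \eqref{P1'}, by pairing $(u_0,u_1)\in\LLL^m$ against radial test functions, applying H\"older, and invoking item \eqref{P1} with the dual exponent $m'$ (so that $\dot{\HHH}^{\frac12+\frac1m}\subset\LLL^{m'}$); items \eqref{P1}, \eqref{P2} and \eqref{P4} are simply cited from \cite{KeMe11} and \cite{DuRoy15P}. Your route replaces all of this with one-dimensional weighted calculus: the pointwise bound $|u_0(r)|\lesssim r^{-1/m}\|u_0\|_{\dotW}$ plus integration by parts and a bootstrap for \eqref{P2}, the two Hardy inequalities for \eqref{P4}, and the odd extension $w=ru_0$ with the radial Fourier identity $|\xi|\widehat{u_0}(\xi)=c\,\widehat{w}(|\xi|)$ reducing \eqref{P1} and \eqref{P1'} to the 1D embeddings $\dot H^{\frac12-\frac1m}(\RR)\hookrightarrow L^m(\RR)$ ($m>2$) and its dual ($1<m<2$). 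I checked the exponent bookkeeping (e.g.\ $(3m-1)m'=3m+2m'$, the vanishing of the boundary terms via Proposition~\ref{prop:fs}, and $s_c-1=\frac12-\frac1m$) and it is all consistent. Two points deserve an explicit word in a written-up version: in \eqref{P2} the absorption step ``solve for $\int|u_0|^{3m}r^2\,dr$'' requires the a priori finiteness of that integral, which you should secure by first arguing for $u_0\in C^\infty_{0,\rad}$ and then passing to the limit using the definition of $\dotW$ as a closure; and the 3D/1D Sobolev identification for the fractional and negative orders involved should likewise be justified on $C^\infty_{0,\rad}$ and extended by density. You flag the latter yourself, and both are routine. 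What your approach buys is a proof independent of the cited lemmas; what the paper's duality argument buys is brevity, since \eqref{P1'} follows in three lines once \eqref{P1} is granted.
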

\begin{proof}
For the proofs of properties \eqref{P1}, \eqref{P2}, \eqref{P4} see \cite[Lemma 3.2]{KeMe11} and \cite[Lemma 3.2 and 3.3]{DuRoy15P}. We prove \eqref{P1'} by duality from \eqref{P1}. Assume $m\in (1,2)$ and let $m'$ be the Lebesgue dual exponent of $m$. Let $(u_0,u_1)\in \LLL^m$ and $\varphi,\psi\in C^{\infty}_{0,\rad}(\RR^3)$. Note that 
$$\int_0^{\infty} r^2\partial_r u_0\partial_r \varphi\,dr=\int_0^{\infty} \partial_r(r u_0)\partial_r(r \varphi)\,dr.$$
By H\"older's inequality and \eqref{P1},
\begin{multline*}
 \left|\int_0^{\infty} r^2\partial_r u_0\partial_r \varphi\,dr\right|+\left|\int_0^{\infty} r^2u_1\psi\,dr\right|\leq \|(u_0,u_1)\|_{\LLL^m}\|(\varphi,\psi)\|_{\LLL^{m'}}\\
\leq \|(u_0,u_1)\|_{\LLL^m}\|(\varphi,\psi)\|_{\dot{\HHH}^{\frac{1}{2}+\frac{1}{m}}}.
\end{multline*}
This yields the announced result.
\end{proof}

Let $v(t,x)$ be a solution to the Cauchy problem
\begin{equation}
\label{1}
(\partial_{t}^{2}-\Delta) v(t,x)=0\,,\,
(v,\,\partial_{t}v)|_{t=0}=(v_{0},\,v_{1})\,,\,
t\in\mathbb{R},\,x\in \mathbb{R}^{3}\,,
\end{equation}
where the initial data is in $\LLL^m$. 
Denote by $r=|x|$ and set
\begin{equation}
\label{3}
F(\sigma)=\frac12\sigma\, v_{0}(|\sigma|)+\frac12\int_{0}^{|\sigma|}r\,v_{1}(r)\,dr.
\end{equation}
An explicit computation, using
\begin{equation}
 \label{eq_v}
(\partial_t^2-\partial_r^2)(rv)=0
\end{equation} 
 yields 
\begin{equation}
\label{2} v(t,r)=\frac1r\bigl(F(t+r)-F(t-r)\bigr).
\end{equation}
We have
 \begin{equation}
 \label{def_pm}
 [v]_+(t,r)=(\partial_r+\partial_t)(rv)=2\dot{F}(t+r),\quad [v]_-(t,r)=(\partial_r-\partial_t)(rv)=2\dot{F}(t-r).
 \end{equation}
 If $(v_0,v_1)\in \LLL^m$ we denote by
 $$E_m(v_0,v_1)=\int_0^{+\infty} \left( |\partial_r(rv_0)+r v_1|^m+ |\partial_r(rv_0)-r v_1|^m\right)\,dr,$$
 so that 
 $$E_m(\vec{v}(t))=\int_0^{+\infty} |[v]_+(t,r)|^m\,dr+\int_0^{+\infty} |[v]_-(t,r)|^m\,dr.$$
\begin{proposition}
\label{P:prelimLW}
Assume $1<m<+\infty$. 
Let $(v_0,v_1)\in \LLL^m$ and $v(t,r)$ be given by \eqref{1}.
 \begin{enumerate}
\item \label{P3} \emph{Equivalence of energy and $\LLL^m$ norm}.
 $$\left\|(v_0,v_1)\right\|_{\LLL^m}\approx \int_{0}^{+\infty} \left|\partial_r(rv_0)\right|^m\,dr+\int_0^{+\infty} |rv_1|^m\,dr\approx E_m(v_0,v_1),$$
\item \label{P5} \emph{Energy conservation}. $E_m(\vec{v})$ is independent of time. We call $E_m$ the $\LLL^m$-modified energy for equation \eqref{LW}.
 \item \label{P6} \emph{Exterior energy bound}. If $R>0$, the following holds for all $t\geq 0$ or for all $t\leq 0$:
 $$\int_{R}^{+\infty} |\partial_r(rv_0)|^m+|rv_1|^m\,dr\lesssim \int_{R+|t|}^{+\infty} |\partial_r(rv)|^m+|\partial_t(rv)|^m\,dr.$$
 \end{enumerate}
\end{proposition}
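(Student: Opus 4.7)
\emph{Plan.} The proofs of (1) and (2) are essentially computational. For (1), I would start from the identity $\partial_r(rv_0)=v_0+r\partial_rv_0$, apply Proposition~\ref{P:prelim}(4), and let $R\to 0^{+}$: the boundary term $R|v_0(R)|^m$ vanishes by Proposition~\ref{prop:fs}, which gives $\int_0^{+\infty}|\partial_r(rv_0)|^m\,dr \approx \|v_0\|_{\dotW}^m$; the $v_1$-part of the norm is unchanged. The equivalence with $E_m(v_0,v_1)$ then follows from the elementary convexity inequality $|a|^m+|b|^m\approx |a+b|^m+|a-b|^m$ (valid for all $m>1$ with constants depending only on $m$), applied pointwise with $a=\partial_r(rv_0)$ and $b=rv_1$. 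For (2), I would use \eqref{def_pm} and substitute $s=t\pm r$ separately in each integral:
\begin{equation*}
\int_0^{+\infty}|[v]_+(t,r)|^m\,dr=\int_t^{+\infty}|2\dot F(s)|^m\,ds,\q \int_0^{+\infty}|[v]_-(t,r)|^m\,dr=\int_{-\infty}^t|2\dot F(s)|^m\,ds,
\end{equation*}
so that $E_m(\vec v(t))=\int_{-\infty}^{+\infty}|2\dot F(s)|^m\,ds$, which is manifestly time-independent.

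For (3), I would work again in $F$-variables and set $A:=\int_R^{+\infty}|2\dot F(s)|^m\,ds$, $B:=\int_{-\infty}^{-R}|2\dot F(s)|^m\,ds$. By (1) applied at $t=0$, the left-hand side of the claimed inequality is equivalent to $A+B$. By (1) applied to $\vec v(t)$ on the exterior region and the same substitutions as in (2) restricted to $r\ge R+|t|$, the right-hand side is, for $t\ge 0$, equivalent to $\int_{R+2t}^{+\infty}|2\dot F|^m\,ds+B$, and for $t\le 0$, equivalent to $A+\int_{-\infty}^{-R-2|t|}|2\dot F|^m\,ds$. In particular the right-hand side is $\gtrsim B$ for every $t\ge 0$ and $\gtrsim A$ for every $t\le 0$. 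I would then conclude by dichotomy: if $A\le B$, the sign choice $t\ge 0$ works because $A+B\le 2B\lesssim \mathrm{RHS}$ for all $t\ge 0$; otherwise the sign choice $t\le 0$ works because $A+B\le 2A\lesssim \mathrm{RHS}$ for all $t\le 0$.

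The main (and essentially the only) subtle point is this forced choice of sign in (3): outside the light cone, the ``incoming'' contribution $B$ (values of $\dot F$ on $(-\infty,-R)$) is visible only for $t\ge 0$, while the ``outgoing'' contribution $A$ (values on $(R,+\infty)$) is visible only for $t\le 0$. Since no single sign of $t$ can control both $A$ and $B$ uniformly, the statement must be formulated with an ``or'' between $t\ge 0$ and $t\le 0$ rather than with a quantifier over all $t$; comparing $A$ and $B$ then tells us which sign to select. Parts (1) and (2) are routine once the boundary decay in Proposition~\ref{prop:fs} and the $\dot F$-representation \eqref{def_pm} are in hand.
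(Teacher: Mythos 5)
Your proposal is correct and follows the same route the paper (very tersely) indicates: everything reduces, via the d'Alembert representation \eqref{2} and the identities \eqref{def_pm}, to the single density $|2\dot F(s)|^m\,ds$ on the line, with the pointwise equivalence $|a|^m+|b|^m\approx|a+b|^m+|a-b|^m$ handling the passage between $(\partial_r(rv),\partial_t(rv))$ and $[v]_\pm$, Proposition~\ref{P:prelim}\,\eqref{P4} plus the vanishing in Proposition~\ref{prop:fs} handling part \eqref{P3}, and the $A$-versus-$B$ dichotomy giving the forced choice of time direction in \eqref{P6} exactly as in the channels-of-energy literature. The only cosmetic caveat is the homogeneity mismatch in the statement of \eqref{P3} (a norm compared with $m$-homogeneous quantities), which you implicitly resolve correctly by proving the equivalence at the level of $m$-th powers.
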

Property \eqref{P5} follows from direct computation, and the formula \eqref{eq_v}. Let us mention that the notation $E_m$ has a slightly different meaning in \cite{DuRoy15P}.
\begin{remark}
Note that:
\begin{equation}
\label{13}E_{2}(v(t))=\int_{\mathbb R^{3}}|\nabla v(t,x)|^{2}dx+\int_{\mathbb R^{3}}|\partial_{t} v(t,x)|^{2}dx\,,
\end{equation}
which coincides (up to a constant) with the standard energy functional for \eqref{1}. 
Moreover, from \eqref{2} we know for any $m\in (1,+\infty)$\,, there exists $C_{m}>0$ such that
\begin{equation}
\label{11}
C_{m}^{-1} \,\|\vec{v}(0)\|_{\LLL^m}\leq\, \|\vec{v}(t)\|_{\LLL^m}\leq\,C_m \|\vec{v}(0)\|_{\LLL^m},\quad\forall \; t\,.
\end{equation}
Thus $\|\vec{v}(t)\|_{\LLL^m}$ enjoys the pseudo-conservation law, namely \eqref{11}, and extends the 
classical energy to the general case $m>1$.
\end{remark}
 From the conservation of the energy, we deduce the following energy estimate for the equation with a right-hand side.
\begin{corollary}
Consider the problem
\begin{equation}
\label{26}
(\partial_{t}^{2} -\Delta) u(t,x)=f(t,x)\,,\,
(u,\partial_{t}u)|_{t=0}=(u_{0},u_{1})\,,\,t\in \mathbb R,\,x\in\mathbb R^{3}\,,
\end{equation} with $(u_{0}, u_{1})\in \LLL^m$ for a fixed $m>1$, and $f$ is radial. Then  we have the following inequality as long as the righthand side is finite,
\begin{equation}
\label{energy-est}
\begin{split}
\sup_{t\in\R}\Bigl(\int_{0}^{\infty}&\Bigl[|\partial_{r}(ru)|^{m}(t)+ |\partial_{t}(ru)|^{m}(t)\Bigr]\,dr\Bigr)^{\frac1m}\\
\leq &C\Bigl(\|(u_0,u_1)\|_{\LLL^m}+\int_{-\infty}^{+\infty}\Bigl(\int_{0}^{\infty}|rf(t,r)|^{m}dr\Bigr)^{\frac{1}{m}}dt\Bigr)
\end{split}
\end{equation}
\end{corollary}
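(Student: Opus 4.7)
The plan is to combine Duhamel's principle with the pseudo-conservation law \eqref{11} and the norm equivalence in Proposition \ref{P:prelimLW}\eqref{P3}. First I would write the solution of \eqref{26} by Duhamel's formula:
$$
\vec{u}(t)=S_{\rm L}(t)(u_{0},u_{1})+\int_{0}^{t}S_{\rm L}(t-s)\bigl(0,f(s,\cdot)\bigr)\,ds.
$$
Since $\LLL^m$ is a Banach space (the closure of $(C_{0,\rad}^{\infty})^{2}$ under a genuine norm), the triangle inequality applies to the Bochner integral on the right, yielding
$$
\|\vec{u}(t)\|_{\LLL^m}\leq \|S_{\rm L}(t)(u_{0},u_{1})\|_{\LLL^m}+\int_{0}^{t}\|S_{\rm L}(t-s)\bigl(0,f(s,\cdot)\bigr)\|_{\LLL^m}\,ds.
$$

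Next I would apply the pseudo-conservation law \eqref{11} termwise: for the homogeneous part $\|S_{\rm L}(t)(u_{0},u_{1})\|_{\LLL^m}\leq C_{m}\|(u_{0},u_{1})\|_{\LLL^m}$, and for each source propagator $\|S_{\rm L}(t-s)(0,f(s,\cdot))\|_{\LLL^m}\leq C_{m}\|(0,f(s,\cdot))\|_{\LLL^m}$. The latter norm, directly from the definition of $\|\cdot\|_{\LLL^m}$, is precisely $\left(\int_{0}^{\infty}|rf(s,r)|^{m}dr\right)^{1/m}$. Bounding the $s$-integral on $[0,t]$ (or $[t,0]$ when $t<0$) by the integral over all of $\R$ of the nonnegative integrand, and then taking the supremum over $t\in\R$, gives
$$
\sup_{t\in\R}\|\vec{u}(t)\|_{\LLL^m}\leq C_{m}\|(u_{0},u_{1})\|_{\LLL^m}+C_{m}\int_{-\infty}^{+\infty}\Bigl(\int_{0}^{\infty}|rf(s,r)|^{m}dr\Bigr)^{1/m}ds.
$$
Finally, by the norm equivalence \eqref{P3} the LHS of \eqref{energy-est} is comparable to $\sup_{t}\|\vec{u}(t)\|_{\LLL^m}$, which closes the estimate up to a multiplicative constant.

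The only point that requires a little care is the rigorous meaning of the Duhamel integral when $(u_{0},u_{1})$ is merely in $\LLL^m$ and $f$ only satisfies $\int_{\R}(\int_{0}^{\infty}|rf|^{m}dr)^{1/m}dt<\infty$. I would handle this by the standard regularization: approximate $(u_{0},u_{1})$ by smooth compactly supported radial data and $f$ by a smooth compactly supported radial source, for which the wave equation \eqref{26} has a classical solution and Duhamel holds in the strong sense, apply the inequality above, and pass to the limit using the finiteness of the RHS of \eqref{energy-est} together with the density in the definition of $\LLL^m$. This approximation step is the only non-trivial obstacle; the core inequality itself is essentially a one-line consequence of Minkowski plus \eqref{11}.
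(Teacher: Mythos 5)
Your proof is correct and follows essentially the same route as the paper: split $u$ into the homogeneous part and the Duhamel term, apply Minkowski's inequality to the latter, and control each propagator term via the pseudo-conservation law \eqref{11} (equivalently, the conservation of the $\LLL^m$-modified energy), identifying $\|(0,f(s,\cdot))\|_{\LLL^m}$ with $(\int_0^\infty |rf(s,r)|^m\,dr)^{1/m}$. The regularization remark at the end is a reasonable extra precaution but does not change the substance of the argument.
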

\begin{proof}
Write $u(t,r)=u_{\rm L}(t,r)+u_{\mathcal N}(t,r)$
with 
\[u_{\rm L}(t,r)=S_{\rm L}(t)(u_{0},\,u_{1})\,,\quad
u_{\mathcal N}(t)=\int_{0}^{t}\frac{\sin(t-s)\sqrt{-\Delta}}{\sqrt{-\Delta}}f(s)\,ds\,,\]
The bound for $\|\vec{u}_{\rm L}\|_{\LLL^m}$ follows from \eqref{11} and the conservation of the $\LLL^m$ modified energy. Moreover,
$$\left\|\vec{u}_{\mathcal N}(t,r)\right\|_{\LLL^m}\leq \int_0^t\left\|\left( \frac{\sin\left((t-s)\sqrt{-\Delta}\right)}{\sqrt{-\Delta}}f(s),\cos\left((t-s)\sqrt{-\Delta}\right)f(s)\right)\right\|_{\LLL^m}\,ds,$$
and the estimate on $u_{\NNN}$ follows again from \eqref{11} and the conservation of the $\LLL^m$-modified energy. The proof is complete.
\end{proof}

\subsection{Strichartz estimates in weighted Sobolev spaces}
\label{SS:Strichartz}
Let $\Omega$ be a measurable subset of $\R_t\times (0,+\infty)$ of the form $\Omega=\bigcup_{t\in \R} \{t\}\times J_t$ where for all $t$, $J_t$ is a measurable subset of $(0,+\infty)$. If $f$ is a measurable function on $\Omega$, we let
$$\|f\|_{S(\Omega)}=\left(\int_{\R}\left(  \int_{J_t} |f(t,r)|^{(2m+1)m}r^mdr\right)^{\frac{1}{m}}dt\right)^{\frac{1}{2m+1}}.$$
If $\Omega=I\times (0,+\infty)$, where $I$ is a time interval, we will denote $S(\Omega)=S(I)$ to lightened notations:
$$\|f\|_{S(I)}=\left(\int_{I} \left(  \int_{0}^{+\infty} |f(t,r)|^{(2m+1)m}r^mdr\right)^{\frac{1}{m}}dt\right)^{\frac{1}{2m+1}}.$$
In this subsection we prove the following Strichartz estimate:
\begin{proposition}
\label{prop:key-estimate}
Let $m>1$ and assume $v(t,x)$ be the solution of the Cauchy problem \eqref{1} with radial initial data $(v_{0}, v_{1})\in \mathcal L^{m}$.
Then there exists a constant $C$ such that
\begin{equation}
\label{27}
\|v\|_{S(\RR)}
\leq C \| \vec{v}(0)\|_{\LLL^m}\,.
\end{equation}
\end{proposition}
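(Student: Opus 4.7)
The plan is to reduce the estimate to an inequality on the one-dimensional function $\dot F$ of \eqref{3}, then combine a radial Sobolev pointwise bound with the Hardy-Littlewood maximal function.

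First, from \eqref{2}, $r\,v(t,r) = F(t+r)-F(t-r) = \int_{t-r}^{t+r}\dot F(s)\,ds$, and Proposition \ref{P:prelimLW}(\ref{P3})--(\ref{P5}) yield
$$\|\dot F\|_{L^m(\R)}\;\approx\;\|\vec v(t)\|_{\LLL^m}\;\approx\;\|\vec v(0)\|_{\LLL^m}\qquad\text{uniformly in }t\in\R.$$
Thus it suffices to prove $\|v\|_{S(\R)} \lesssim \|\dot F\|_{L^m(\R)}$. Write $B := \|\dot F\|_{L^m(\R)}$ and let $A(t)$ denote the one-dimensional Hardy-Littlewood maximal function of $|\dot F|$ evaluated at $t$.

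Next, I would establish the two pointwise bounds $|v(t,r)|\leq 2\,A(t)$ and $|v(t,r)|\leq C\,r^{-1/m}B$. The first is immediate: $|v(t,r)|=\tfrac{1}{r}\bigl|\int_{t-r}^{t+r}\dot F\bigr| \leq \tfrac{2r}{r}\cdot\tfrac{1}{2r}\int_{t-r}^{t+r}|\dot F| \leq 2A(t)$. The second follows by applying the radial Sobolev pointwise bound $|u_0(r)|\lesssim r^{-1/m}\|u_0\|_{\dotW}$, which is a direct consequence of Proposition \ref{P:prelim}(\ref{P4}), to $v(t,\cdot)$, and then using conservation of the modified energy to replace $\|v(t,\cdot)\|_{\dotW}$ by a multiple of $B$.

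Then I would split the inner integral in $r$ at the crossover radius $R(t) = (B/A(t))^m$ where the two bounds above coincide, using the maximal bound on $(0,R(t))$ and the decay bound on $(R(t),\infty)$. A direct computation gives
$$\int_0^{\infty}|v(t,r)|^{(2m+1)m}\,r^m\,dr\;\lesssim\;A(t)^{m^2}\,B^{m^2+m}.$$
Taking the $1/m$-th power and integrating in $t$ yields
$$\|v\|_{S(\R)}^{2m+1}\;\lesssim\;B^{m+1}\int_{\R} A(t)^m\,dt\;\lesssim\;B^{m+1}\,\|\dot F\|_{L^m(\R)}^m\;=\;\|\dot F\|_{L^m(\R)}^{2m+1},$$
where the last estimate uses the $L^m$-boundedness of the Hardy-Littlewood maximal operator, valid for $m>1$.

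The subtle point is the uniform-in-$t$ validity of the decay bound: without conservation of the modified energy (Proposition \ref{P:prelimLW}(\ref{P5})), the decay constant would drift with $t$ and the argument would not close. The restriction $m>1$ enters precisely at the last step through the $L^m$-boundedness of $M$; at the endpoint $m=1$ only a weak-type bound is available, and the more general Strichartz estimates of \S\ref{SS:Strichartz} would have to be obtained by interpolation with the Ginibre-Velo estimate at $m=2$ as indicated in the introduction.
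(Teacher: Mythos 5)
Your proof is correct, but it follows a genuinely different and more elementary route than the paper. The paper first proves the family of symmetric estimates of Proposition \ref{prop:linear-symmetric-estimate}, i.e. $\bigl(\iint |v|^{\alpha m}r^{\alpha-2}\,dr\,dt\bigr)^{1/(\alpha m)}\lesssim \|\vec v(0)\|_{\LLL^m}$, by real interpolation between an $L^\infty$ bound and a weak-type bound for the averaging operator $\mathcal T$ (the latter resting on the weak $(1,1)$ bound for the maximal function), and then obtains \eqref{27} by complex interpolation between these symmetric estimates (applied at an auxiliary exponent $m^*$) and the Ginibre--Velo estimates \eqref{eq:GV} at the energy regularity. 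You instead work directly: the two pointwise bounds $|v(t,r)|\leq 2M|\dot F|(t)$ and $|v(t,r)|\lesssim r^{-1/m}\|\vec v(0)\|_{\LLL^m}$ (the first from \eqref{2}, the second from Proposition \ref{P:prelim}~\eqref{P4} together with the pseudo-conservation \eqref{11}), combined at the crossover radius $R(t)=(B/A(t))^m$, give $\int_0^\infty|v(t,r)|^{(2m+1)m}r^m\,dr\lesssim A(t)^{m^2}B^{m^2+m}$, and the exponent arithmetic ($(2m+1)m-m^2-m=m^2$ for the inner region, $m-(2m+1)+1=-m$ for the outer one) checks out; the strong $L^m$ bound for the maximal operator then closes the argument. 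What your approach buys is a short, self-contained, real-variable proof of \eqref{27} that needs neither Marcinkiewicz nor complex interpolation nor the classical Strichartz theory; what it does not immediately give is the full family of mixed-norm estimates \eqref{subkey-estimate}, \eqref{subkey-estimate-1} used later for the profile decomposition, for which the paper's interpolation scheme with \eqref{radial-edpt-Strichartz} is still needed. Your concluding remarks about the role of energy conservation and of the restriction $m>1$ are accurate.
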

We also have its analog for the inhomogeneous part:
\begin{proposition}
\label{prop:crucial_bis}
Let $m>1$ and $u(t,r)$ be the solution of \eqref{26} with $\vec{u}(0)=(0,0)$. Assume 
\[\|f\|_{L^{1}_{t}L^{m}_{x}(r^{m}dr)}:=\int_{-\infty}^{+\infty}\Bigl(\int_{0}^{+\infty}|f(t,r)|^{m}r^{m}dr\Bigr)^{\frac1m}dt<\infty\,.\]
Then we have 
\begin{equation}
\label{27-b}
\|u\|_{S(\RR)}\leq C \|f\|_{L^{1}_{t}L^{m}_{x}(r^{m}dr)}\,.
\end{equation}
\end{proposition}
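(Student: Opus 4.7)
The plan is to deduce this inhomogeneous estimate from the homogeneous Strichartz bound of Proposition \ref{prop:key-estimate} by combining Duhamel's representation with Minkowski's integral inequality, in the standard ``Christ--Kiselev style'' but without needing any additional endpoint trickery. Concretely, by Duhamel, the solution of \eqref{26} with zero initial data is
$$u(t) = \int_0^t \frac{\sin((t-s)D)}{D}\, f(s)\, ds.$$
For each fixed $s \in \RR$, introduce $v_s(t,r) := \frac{\sin((t-s)D)}{D} f(s)$, which solves the homogeneous wave equation with Cauchy data $(v_s,\partial_t v_s)_{\restriction t=s} = (0,f(s,\cdot))$. By assumption the inner norm $\bigl(\int_0^\infty |f(s,r)|^m r^m\,dr\bigr)^{1/m}$ is finite for a.e.\ $s$, so $(0,f(s,\cdot)) \in \LLL^m$ with norm exactly equal to this inner quantity. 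Using time-translation invariance and applying Proposition \ref{prop:key-estimate} pointwise in $s$ gives
$$\|v_s\|_{S(\RR)} \leq C \Bigl(\int_0^\infty |f(s,r)|^m r^m\,dr\Bigr)^{1/m}.$$

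The next step is to observe that the norm $\|\cdot\|_{S(\RR)}$ is a mixed Lebesgue norm: unwinding the definition gives
$$\|w\|_{S(\RR)} = \|w\|_{L^{2m+1}_t L^{(2m+1)m}_r(r^m dr)},$$
so both the time exponent $p = 2m+1$ and the space exponent $q = (2m+1)m$ are $\geq 1$ whenever $m>1$. Therefore Minkowski's integral inequality applies in both variables. Writing $u(t,r) = \int_\RR \chi(s,t)\, v_s(t,r)\, ds$ with $\chi(s,t) = \pm \mathbf{1}_{\{\min(0,t)\leq s \leq \max(0,t)\}}$ (the sign accounting for $t<0$), two successive applications of Minkowski yield
$$\|u\|_{S(\RR)} \;\leq\; \Bigl\| \int_\RR |v_s(t,r)|\, ds \Bigr\|_{L^{2m+1}_t L^{(2m+1)m}_r(r^m dr)} \;\leq\; \int_\RR \|v_s\|_{S(\RR)}\, ds.$$
Substituting the pointwise bound on $\|v_s\|_{S(\RR)}$ obtained above completes the proof.

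There is no real obstacle here beyond verifying the validity of Minkowski in both slots, which is why the argument must fail at $m=1$ but works for every $m>1$. The only subtle bookkeeping is that the Duhamel integral runs over $[0,t]$ rather than all of $\RR$; this is harmless since $\chi$ is uniformly bounded by $1$, and in any case extending the $s$-integration to $\RR$ only increases the right-hand side. Throughout, we implicitly use a density argument so that the formal manipulations make sense: one may start with $f \in C^\infty_{c,\rad}(\RR \times \RR^3)$ and then pass to general $f$ by the \emph{a priori} estimate just derived, since the space $L^1_t L^m_x(r^m dr)$ has $C^\infty_c$ as a dense subset.
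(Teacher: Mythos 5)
Your proof is correct, but it follows a genuinely different route from the paper's. The paper obtains \eqref{27-b} by repeating the interpolation scheme used for the homogeneous estimate: it first proves the symmetric estimate \eqref{24} for the inhomogeneous propagator directly from the explicit kernel (Proposition \ref{symmetric-estimate-for-nonhmg}, via the Hardy--Littlewood maximal function and Minkowski), and then interpolates that with the classical Ginibre--Velo inhomogeneous Strichartz estimate \eqref{eq:GV}, ``using the same argument'' as in the proof of Proposition \ref{prop:key-estimate}. You instead reduce the inhomogeneous bound to the homogeneous one by Duhamel plus the generalized Minkowski inequality, which is legitimate here precisely because the forcing is measured in $L^1_t$ of an $\LLL^m$-type norm: no Christ--Kiselev lemma is needed, the truncation of the $s$-integral to $[0,t]$ is absorbed by the pointwise bound $|u(t,r)|\leq\int_{\RR}|v_s(t,r)|\,ds$, and $(0,f(s,\cdot))\in\LLL^m$ for a.e.\ $s$ with norm equal to the inner integral, so Proposition \ref{prop:key-estimate} applies slice by slice. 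Your argument is shorter, renders Proposition \ref{symmetric-estimate-for-nonhmg} unnecessary for this purpose, and is in fact the same mechanism the paper already uses to prove the energy estimate \eqref{energy-est}; the paper's route has the merit of running exactly parallel to the homogeneous case and of recording the symmetric estimate \eqref{24} in its own right. One small inaccuracy in your commentary: the obstruction at $m=1$ is not the Minkowski step (the exponents $2m+1$ and $(2m+1)m$ are still $\geq 1$ there) but the homogeneous input itself, whose proof via weak-type $(1,1)$ bounds and Marcinkiewicz interpolation degenerates at the endpoint; this does not affect the validity of your proof for $m>1$.
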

We start by proving auxiliary symmetric Strichartz-type estimates
in \S \ref{SS:sym_Stri}, using the weak continuity in $L^1$ of the Hardy-Littlewood maximal function. In \S \ref{SS:key_Stri} we will interpolate these estimates with standard Strichartz inequalities to obtain the key estimates \eqref{27} and \eqref{27-b}.

\subsubsection{A family of symmetric Strichartz estimates}
\label{SS:sym_Stri}
With the explicit expression \eqref{2}, we are ready to deduce a crucial estimate
for the linear wave equation \eqref{1} with  $\vec{v}(0)\in \LLL^m$.
\begin{proposition}
\label{prop:linear-symmetric-estimate}
Let $v(t,x)=S_{\rm L}(t)(v_0,v_1)$ be a radial solution of \eqref{1}.
Then for any $m\in\,(1,+\infty)$ and $\alpha\in\,(1,+\infty)$\,, 
there is a constant $C$ such that the following \emph{a priori} estimate is valid 
\begin{equation}
\label{12}
\Bigl(\int_{\mathbb R}\int_{0}^{+\infty}
\bigl |v(t,r)\bigr|^{\alpha m}r^{\alpha-2}drdt\Bigr)^{\frac{1}{\alpha m}}\leq \,C\,\|\vec{v}(0)\|_{\LLL^m}.
\end{equation}
\end{proposition}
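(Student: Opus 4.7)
The plan is to reduce the estimate to a one-dimensional integral via the explicit radial representation \eqref{2}. Setting $g:=\dot F$ (so that \eqref{2} and \eqref{def_pm} give $v(t,r) = \frac{1}{r}\int_{t-r}^{t+r} g(s)\,ds$ and $[v]_\pm(t,r) = 2g(t\pm r)$), Proposition~\ref{P:prelimLW}\eqref{P3} yields $\|\vec v(0)\|_{\LLL^m} \approx \|g\|_{L^m(\R)}$, so it suffices to prove
\[\int_{\R}\int_0^{+\infty} |v(t,r)|^{\alpha m}\, r^{\alpha-2}\,dr\,dt \lesssim \|g\|_{L^m}^{\alpha m}.\]

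For the inner integral at fixed $t$, I would use two complementary pointwise bounds on $|v(t,r)|$: the averaging bound $|v(t,r)| \leq 2\,Mg(t)$, where $M$ denotes the Hardy--Littlewood maximal operator on $\R$, and H\"older's inequality applied to the defining integral, giving $|v(t,r)| \lesssim r^{-1/m}\|g\|_{L^m}$. Using the first bound on $(0,r_0)$ and the second on $(r_0,+\infty)$, both integrands become integrable on their respective ranges (thanks to $\alpha>1$ at the origin and the resulting $r^{-2}$ decay at infinity), and an explicit computation gives
\[\int_0^{+\infty} |v(t,r)|^{\alpha m}\, r^{\alpha - 2}\,dr \lesssim Mg(t)^{\alpha m}\, r_0^{\alpha-1} + \|g\|_{L^m}^{\alpha m}\, r_0^{-1}.\]
Balancing the two terms by choosing $r_0 \sim (\|g\|_{L^m}/Mg(t))^m$ produces the pointwise (in $t$) estimate
\[\int_0^{+\infty} |v(t,r)|^{\alpha m}\, r^{\alpha - 2}\,dr \lesssim \|g\|_{L^m}^{m(\alpha-1)}\, Mg(t)^m.\]

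Integrating in $t$ and invoking the strong $(m,m)$ bound for $M$, which for $m>1$ follows by Marcinkiewicz interpolation from the weak $L^1\to L^1_w$ estimate singled out in Subsection~\ref{SS:Strichartz} as the paper's key ingredient, one obtains $\|Mg\|_{L^m}^m \lesssim \|g\|_{L^m}^m$ and hence
\[\int_\R\int_0^{+\infty} |v|^{\alpha m}\, r^{\alpha-2}\,dr\,dt \lesssim \|g\|_{L^m}^{\alpha m}.\]
Taking the $(\alpha m)$-th root concludes the proof.

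The main obstacle is that neither of the two pointwise bounds is by itself integrable on $(0,+\infty)$ against the weight $r^{\alpha-2}$: the maximal-function bound diverges at $r=\infty$, and the H\"older bound diverges at $r=0$. The scale-dependent splitting at $r_0$ and its optimization is the mechanism that couples the two estimates, and it is the only genuinely nontrivial step; once in place, the proof reduces to the classical $L^m$-boundedness of the Hardy--Littlewood maximal function.
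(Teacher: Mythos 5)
Your proof is correct. The reduction to the one-dimensional representation $v(t,r)=\frac1r\int_{t-r}^{t+r}g(s)\,ds$ with $\|g\|_{L^m(\R)}\approx\|\vec v(0)\|_{\LLL^m}$ is exactly the paper's starting point, the two pointwise bounds are valid ($|v(t,r)|\le 2Mg(t)$ by averaging, $|v(t,r)|\lesssim r^{-1/m}\|g\|_{L^m}$ by H\"older), and the optimization $r_0\sim(\|g\|_{L^m}/Mg(t))^m$ correctly yields $\int_0^\infty|v|^{\alpha m}r^{\alpha-2}\,dr\lesssim\|g\|_{L^m}^{m(\alpha-1)}Mg(t)^m$, after which the strong $(m,m)$ maximal inequality closes the argument. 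The only point worth flagging is that $\alpha>1$ is used twice (integrability of $r^{\alpha-2}$ at the origin and the exponent balance), which matches the hypothesis.

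Where you genuinely diverge from the paper is in how the interpolation is executed. The paper introduces the averaging operator $\mathcal T G(t,r)=\frac{1}{2r}\int_{t-r}^{t+r}G(s)\,ds$, proves the trivial bound $\mathcal T:L^\infty\to L^\infty$ together with a weak-type endpoint $\mathcal T:L^1\to L^\alpha_w(\R\times\R_+;r^{\alpha-2}\,dr\,dt)$ (the latter via the observation that on the superlevel set one has both $r<\|G\|_{L^1}/\lambda$ and $\mathcal M G(t)>\lambda$, hence the weak $(1,1)$ maximal estimate), and then cites the Marcinkiewicz-type interpolation theorem for operators into weighted weak Lebesgue spaces (Theorem 5.3.2 in Bergh--L\"ofstr\"om). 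You never form a weak-type endpoint: your second pointwise bound lives at the exponent $m$ itself, and the $t$-dependent splitting of the $r$-integral at the scale $r_0(t)$ plays the role that real interpolation plays in the paper. Your version is more self-contained -- it avoids the abstract interpolation theorem in the weighted weak-space setting entirely -- at the modest cost of invoking the strong $(m,m)$ bound for $M$ rather than only its weak $(1,1)$ bound; it also treats the $v_0$ and $v_1$ contributions simultaneously through $g=\dot F$, whereas the paper handles them separately. Both arguments rest on the same core mechanism, namely that the spherical mean is dominated by the Hardy--Littlewood maximal function of the one-dimensional trace.
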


%

\begin{proof}
We assume $v_{1}\equiv 0$ first. Then from \eqref{3} and the fundamental theorem of calculus, 
\begin{equation}
\label{14}
v(t,r)=\frac{1}{2r}\int_{t-r}^{t+r}\partial_{s}\bigl(s\, v_{0}(|s|)\bigr)\,ds,\quad r=|x|\,.
\end{equation}
Let us consider the operator 
\begin{equation}
\label{T}\mathcal T:\, G(s)\mapsto \frac{1}{2r}\int_{t-r}^{t+r}G(s)\,ds.\end{equation}
First, it is clear that 
\begin{equation}
\label{15}
\sup_{(t,r)\in \mathbb R\times \mathbb{R}_{+}}
|\mathcal TG(t,r)|\leq\,\bigl\|G\bigr\|_{L^{\infty}(\mathbb R;\,ds)}.
\end{equation}
Next, we demonstrate the following weak type estimate
\begin{equation}
\label{16}
\bigl\|\mathcal T G\bigr\|_{L^{\alpha}_{w}(\mathbb R\times\mathbb R_{+}; \,r^{\alpha-2}drdt)}\leq C\|G\|_{L^{1}(\mathbb R;\,ds)}\,,
\end{equation}
or equivalently, there is $C>0$ such that for any $\lambda>0$ we have
\begin{equation}
\label{17}
\iint_{\mathcal E_{\lambda}}r^{\alpha-2}drdt\leq C\Bigl(\frac{\|G\|_{L^{1}}}{\lambda}\Bigr)^{\alpha},
\end{equation}
where $\mathcal E_{\lambda}=\{(t,r)\in \mathbb R\times\mathbb R_{+}: |\mathcal T G(t,r)|>\lambda\}$.
\vskip0.2cm

Given this, we have, interpolating between \eqref{15} and \eqref{16} ( see Theorem 5.3.2 in \cite{BeLo76B} )
\begin{equation}
\label{good-estimate}
\Bigl(\int_{\mathbb R}\int_{0}^{+\infty}|\mathcal T G(t,r)|^{\alpha m}r^{\alpha-2}drdt\Bigr)^{\frac{1}{\alpha}}\leq C\, \int_{\mathbb R}|G(s)|^{m}ds.
\end{equation}
The estimate \eqref{12} with $v_{1}\equiv0$ now follows by using \eqref{good-estimate} with 
\[G(s)=\partial_{s}\bigl(s\,v_{0}(|s|)\bigr).\]
\vskip0.2cm

To show \eqref{17}, one observes that on $\mathcal E_{\lambda}$, 
$$0<r<\frac{\|G\|_{L^{1}}}{\lambda}\quad
\text{and}\quad(\mathcal M G)(t)>\lambda\,, $$ 
where $\mathcal M$ denotes the Hardy-Littlewood maximal function. Therefore, we can bound
the left hand side of \eqref{17} as follows
\begin{align}
 \int^{\|G\|_{L^{1}}/2\lambda}_{0}r^{\alpha-2}dr\int_{\{t\in\R\mid (\mathcal M G)(t)>\lambda\}}dt\leq C\Bigl(\frac{\|G\|_{L^{1}}}{\lambda}\Bigr)^{\alpha},
\end{align}
where we have used the weak estimate $\mathcal M:\,L^{1}(\mathbb R)\rightarrow L^{1}_{w}(\mathbb R)$.
\vskip0.2cm

The case $v_{0}\equiv 0$ follows from the same argument. Indeed, in this case we have:
\begin{equation}
\label{19}
v(t,r)=\frac{1}{2r}\int_{t-r}^{t+r}sv_{1}(|s|)\,ds\,.
\end{equation}
Letting $G(s)=sv_{1}(|s|)$ and applying \eqref{good-estimate} we are done.
\end{proof}
Let $u(t,x)$ be a solution to the nonhomogeneous Cauchy problem (\ref{26}),
where $f(t,x)$ is radial in the space variable and locally integrable. 
If we set 
\begin{equation}
\label{6}
g(t,\rho)=\rho\,f(t,|\rho|),
\end{equation}
then we have
\begin{equation}
\label{5}
u(t,r)=\frac{1}{2r}\int_{0}^{t}\int_{\tau-r}^{\tau+r}g(t-\tau,\,\sigma)\,d\sigma d\tau.
\end{equation}
After a change of variables, we obtain 
\begin{equation}
\label{7}
u(t,r)=\frac{1}{2r}\int_{t-r}^{t+r}G(t,\rho)\,d\rho\,,
\end{equation}
with 
\[G(t,\rho)=\int_{0}^{t}g(s,\rho-s)\,ds\,.\]
A proof very close to the one of Proposition \ref{prop:linear-symmetric-estimate} yields symmetric Strichartz estimates for the nonhomogeneous equation:
\begin{proposition}
\label{symmetric-estimate-for-nonhmg}
Let $u(t,x)$ be a radial solution of the problem \eqref{26} with initial data $\vec{u}(0)=(0,0)$. Then for any $m\in \,(1,+\infty)$  and $\alpha\in \,(1,+\infty)$\, there is a constant $C$ such that we have 
\begin{equation}
\label{24}
\Bigl(\int_{\mathbb R}\int_{0}^{+\infty} |u(t,r)|^{\alpha m}r^{\alpha-2}drdt\Bigr)^{\frac{1}{\alpha m}}
\leq C\,\int_{\mathbb R}\Bigl(\int_{0}^{+\infty}|rf(t,r)|^{m}dr\Bigr)^{\frac1m}dt.
\end{equation}
\end{proposition}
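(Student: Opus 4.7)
The plan is to deduce this inhomogeneous estimate directly from Proposition \ref{prop:linear-symmetric-estimate} by a Duhamel-type argument followed by Minkowski's integral inequality. Starting from the representation \eqref{7} and swapping the order of the $s$ and $\rho$ integrations, the change of variable $\sigma=\rho-s$ yields, for $t>0$,
\[
u(t,r) = \int_{0}^{t}\tilde u_{s}(t,r)\,ds,\qquad \tilde u_{s}(t,r):=\frac{1}{2r}\int_{(t-s)-r}^{(t-s)+r}g(s,\sigma)\,d\sigma,
\]
and symmetrically for $t<0$ (with $s$ ranging over $[t,0]$). Comparing with the formula \eqref{19}, one recognizes $\tilde u_{s}(t,\cdot)=S_{\rm L}(t-s)(0,f(s,\cdot))$: it is simply the homogeneous linear wave evolution of $(0,f(s,\cdot))$ issued from time $s$.

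Next, since the weight $r^{\alpha-2}dr\,dt$ on $\R\times\R_{+}$ is invariant under time translations, Proposition \ref{prop:linear-symmetric-estimate} applied with $(v_{0},v_{1})=(0,f(s,\cdot))$ gives, for each fixed $s$,
\[
\Bigl(\int_{\R}\int_{0}^{+\infty}|\tilde u_{s}(t,r)|^{\alpha m}r^{\alpha-2}\,dr\,dt\Bigr)^{\frac{1}{\alpha m}}\lesssim \Bigl(\int_{0}^{+\infty}|rf(s,r)|^{m}\,dr\Bigr)^{\frac{1}{m}}.
\]
Because $\alpha>1$ and $m>1$, the outer exponent $\alpha m$ exceeds $1$, so Minkowski's integral inequality applies in the variable $s$ to the pointwise bound $|u(t,r)|\le\int_{\R}|\tilde u_{s}(t,r)|\,ds$ (with $\tilde u_{s}$ understood to be extended by zero in $s$ outside the relevant interval), and directly yields \eqref{24}.

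Thus the main input of the proof is really Proposition \ref{prop:linear-symmetric-estimate}; there is no genuine obstacle, only routine bookkeeping involving the sign of $t$ and the elementary identity $\|sf(|s|)\|_{L^{m}(\R,ds)}^{m}=2\|rf(r)\|_{L^{m}(\R_{+},dr)}^{m}$ used when passing from $g(s,\sigma)=\sigma f(s,|\sigma|)$ back to $f$. Alternatively, one could mimic the proof of Proposition \ref{prop:linear-symmetric-estimate} directly by establishing an $L^{\infty}\to L^{\infty}$ and an $L^{1}\to L^{\alpha}_{w}$ bound for the two-variable averaging operator $G\mapsto \frac{1}{2r}\int_{t-r}^{t+r}G(t,\rho)\,d\rho$, and then interpolating to obtain a bound from $L^{1}_{t}L^{m}_{\rho}$ into $L^{\alpha m}(r^{\alpha-2}dr\,dt)$; this route leads to exactly the same inequality.
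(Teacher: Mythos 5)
Your proof is correct and is essentially the paper's argument: the paper likewise reduces \eqref{24} to the homogeneous estimate \eqref{good-estimate} combined with Minkowski's integral inequality, the only cosmetic difference being that it bounds $|u|$ pointwise by $\mathcal T\widetilde G$ with $\widetilde G(\rho)=\int_{-\infty}^{+\infty}|g(s,\rho-s)|\,ds$ and applies Minkowski in $L^m(d\rho)$ on the right-hand side, whereas you decompose $u$ as a Duhamel superposition of time-translated free waves and apply Minkowski in the space-time norm $L^{\alpha m}(r^{\alpha-2}\,dr\,dt)$ on the left. Both orderings are legitimate since $m>1$ and $\alpha m>1$, so no further comment is needed.
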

\begin{proof}
In view of \eqref{7}, we have
\[|u(t,r)|\leq \mathcal T \widetilde G(t,r)\,,\]
where $\mathcal T$ is defined as in \eqref{T} and 
\[\widetilde G(\rho)=\int_{-\infty}^{+\infty}\bigl|g(s,\rho-s)\bigr|ds\,,\]
with $g$ given by \eqref{6}. 
Noting that $m>1$, we obtain \eqref{24} by using  \eqref{good-estimate} and Minkowski's inequality.
\end{proof}
\begin{remark}
Notice that from \eqref{14} and \eqref{19}, one may deduce the following end-point Strichartz estimate
for linear wave equations in three dimensions with radial
initial data
\begin{equation}
\label{radial-edpt-Strichartz}
\bigl\|S_{\rm L}(t)(v_{0},v_{1})\bigr\|_{L^{2}(\mathbb R_{t},\,L^{\infty}(\mathbb R^{3}_{x}))}
\leq C\,\bigl(\|v_{0}\|_{\dot H^{1}(\R^{3})}+\|v_{1}\|_{L^{2}(\R^{3})}\bigr)\,,
\end{equation}
where $(v_{0},v_{1})\in \dot H^{1}_{rad}(\mathbb R^{3})\times L^{2}_{rad}(\mathbb R^{3})$.
In fact, we may assume without loss of generality that 
$(v_{0}, v_{1})$ belongs to the Schwartz class. Then \eqref{radial-edpt-Strichartz} follows from \eqref{14}
and \eqref{19} by using the $L^{2}$-boundedness 
of the Hardy-Littlewood maximal function and integration by parts.
\end{remark}

\subsubsection{Proof of the key Strichartz inequality}
\label{SS:key_Stri}
We prove here Propositions \ref{prop:key-estimate} and \ref{prop:crucial_bis}. Let us first recall
the following classical Strichartz estimates for wave equations (see \cite{GiVe95}).
\begin{theorem}
 \label{theo:GV}
Consider $v(t,x)$ the solution of the linear Cauchy problem
\begin{equation}
\label{eq:linear ondes}
\begin{cases}
&(\partial_{t}^{2}-\Delta)v=h(t,x)\qquad\quad (x,t)\in\R^{3}\times \R\\
&v\big|_{t=0}=v_{0}\in \dot H^{1}(\R^{3})\\
&\partial_{t}v\big|_{t=0}=v_{1}\in L^{2}(\R^{3})
\end{cases}
\end{equation}
so that 
\[v(t)=S_{\rm L}(t)(v_{0},v_{1})+\int_{0}^{t}\frac{\sin(t-s)\sqrt{-\Delta}}{\sqrt{-\Delta}}h(s)\,ds\,.\]
Let $2\leq q,\sigma\leq \infty$ and let the following conditions be satisfied
\[\frac1q+\frac1\sigma\leq \frac12,\quad (q,\sigma)\neq (2,\infty),\quad \frac1q+\frac3\sigma=\frac12\,.\]
Then there exists $C>0$,  such that $v$ satisfies the estimate
\begin{equation}
\label{eq:GV}
\|v\|_{L^{q}(\mathbb R,\, L^{\sigma}(\R^{3}))}
\leq C
\bigl(\|v_{0}\|_{\dot H^{1}(\R^{3})}+\|v_{1}\|_{L^{2}(\R^{3})}+\|h\|_{L^{1}(\mathbb R\,;\,L^{2}(\mathbb R^{3})}\bigr).
\end{equation}
\end{theorem}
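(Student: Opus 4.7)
The plan is to follow the classical Ginibre--Velo strategy, building on frequency-localized dispersive estimates and the $TT^*$ method. First I would reduce the inhomogeneous estimate to the homogeneous one by Minkowski's inequality applied to the Duhamel integral: each slice $\frac{\sin((t-s)\sqrt{-\Delta})}{\sqrt{-\Delta}} h(s)$ can be viewed as a free solution with data $(0, h(s))$ of $L^2$-size $\|h(s)\|_{L^2}$, so the homogeneous bound controls its $L^q_tL^\sigma_x$ norm by $\|h(s)\|_{L^2}$, and integration in $s$ yields the $L^1_tL^2_x$ contribution claimed in \eqref{eq:GV}.

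For the homogeneous estimate $\|S_{\rm L}(t)(v_0, v_1)\|_{L^q_tL^\sigma_x} \lesssim \|(v_0, v_1)\|_{\dot H^1 \times L^2}$, I would decompose dyadically in frequency via Littlewood--Paley projectors $P_N$ and prove the bound for each piece. The cornerstone is the three-dimensional dispersive decay
\begin{equation*}
\|e^{\pm it\sqrt{-\Delta}} P_N f\|_{L^\infty_x} \lesssim N(1+N|t|)^{-1} \|P_N f\|_{L^1_x},
\end{equation*}
a consequence of stationary phase applied to the half-wave propagator (equivalently, Kirchhoff's formula together with strong Huygens' principle). Interpolating with $L^2$-unitarity gives, for $2 \leq \sigma \leq \infty$,
\begin{equation*}
\|e^{\pm it\sqrt{-\Delta}} P_N f\|_{L^\sigma_x} \lesssim N^{1-2/\sigma} |t|^{-(1-2/\sigma)} \|P_N f\|_{L^{\sigma'}_x}.
\end{equation*}

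The $TT^*$ method then converts this into a space-time estimate: the composition $e^{it\sqrt{-\Delta}}P_N(e^{is\sqrt{-\Delta}}P_N)^*$ is convolution in time with a spatial operator of norm $\lesssim N^{1-2/\sigma}|t-s|^{-(1-2/\sigma)}$ from $L^{\sigma'}_x$ to $L^\sigma_x$, and the Hardy--Littlewood--Sobolev inequality in the time variable produces the frequency-localized Strichartz bound $\|e^{it\sqrt{-\Delta}}P_N f\|_{L^q_tL^\sigma_x} \lesssim N \|P_N f\|_{L^2_x}$ when the scaling relation $1/q + 3/\sigma = 1/2$ is matched to the $\dot H^1$ regularity. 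The admissibility condition $1/q + 1/\sigma \leq 1/2$ with $(q,\sigma)\neq (2,\infty)$ places us in the valid HLS range; pairs strictly below the sharp line $1/q + 1/\sigma = 1/2$ can alternatively be obtained from sharp pairs via Sobolev embedding in the spatial variable. Square-summation over dyadic $N$, legitimate for $q, \sigma \geq 2$ by Littlewood--Paley theory combined with Minkowski's inequality, upgrades the frequency-localized estimate to the full $\dot H^1 \times L^2$ bound.

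The main obstacle in this framework is always the endpoint $q = 2$, where the $TT^*$ + HLS argument degenerates and Keel--Tao's bilinear interpolation is needed; however in three dimensions this endpoint is precisely $(q, \sigma) = (2, \infty)$, which is explicitly excluded from the hypothesis. Thus the dispersive + $TT^*$ + HLS argument suffices for all admissible pairs in the statement, and the remaining verifications are routine.
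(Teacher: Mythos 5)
The paper does not actually prove Theorem \ref{theo:GV}: it is quoted as a known result of Ginibre and Velo (\cite{GiVe95}, see also \cite{LiSo95}) and then used as one endpoint of the interpolation that yields the new weighted Strichartz estimates. Your proposal supplies the standard dispersive-estimate/$TT^*$/Littlewood--Paley proof, which is indeed the canonical way to establish it from scratch; the reduction of the Duhamel term by Minkowski's inequality (legitimate here because the source is measured in $L^1_tL^2_x$, so no Christ--Kiselev lemma is needed) and the observation that the only problematic endpoint $(2,\infty)$ is excluded by hypothesis are both correct.

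Two quantitative points in your sketch need repair before the argument closes. First, the frequency-localized dispersive estimate in three dimensions reads
\begin{equation*}
\bigl\|e^{\pm it\sqrt{-\Delta}}P_Nf\bigr\|_{L^\infty_x}\lesssim N^{3}\,(1+N|t|)^{-1}\,\|P_Nf\|_{L^1_x},
\end{equation*}
not $N(1+N|t|)^{-1}$: the projector $P_N$ alone already has $L^1\to L^\infty$ norm $\sim N^3$. After interpolation with $L^2$-unitarity the $L^{\sigma'}\to L^\sigma$ constant is therefore $N^{2(1-2/\sigma)}|t|^{-(1-2/\sigma)}$, not $N^{1-2/\sigma}|t|^{-(1-2/\sigma)}$. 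Second, the $TT^*$ plus Hardy--Littlewood--Sobolev step with time kernel $|t-s|^{-(1-2/\sigma)}$ mapping $L^{q'}_t\to L^q_t$ requires exactly $1-\frac{2}{\sigma}=\frac{2}{q}$, i.e.\ the \emph{sharp} admissibility line $\frac1q+\frac1\sigma=\frac12$; but under the gap condition $\frac1q+\frac3\sigma=\frac12$ the only pair on that line is precisely the excluded endpoint $(2,\infty)$. So the route you describe as an ``alternative'' --- prove the sharp-admissible estimate for $(q,\tilde\sigma)$ with $\frac1q+\frac1{\tilde\sigma}=\frac12$, then apply Bernstein's inequality $\|P_Ng\|_{L^\sigma}\lesssim N^{3(1/\tilde\sigma-1/\sigma)}\|P_Ng\|_{L^{\tilde\sigma}}$ at fixed frequency --- is in fact the necessary route for \emph{every} nontrivial pair in the statement. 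The bookkeeping does work out: the sharp estimate gives $N^{2/q}\|f\|_{L^2}$ and Bernstein contributes $N^{1-2/q}$, for a total of $N\|P_Nf\|_{L^2}\sim\|P_Nf\|_{\dot H^1}$, after which square-summation over dyadic $N$ (valid since $2\le q,\sigma<\infty$ for all non-excluded pairs) finishes the proof.
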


We are now ready to prove Proposition \ref{prop:key-estimate}
\begin{proof}
Since \eqref{27} is classical when $m=2$,
it suffices to consider below the cases for $m>2$ and $1<m<2$ separately.
\medskip

If $m>2$, we denote by $m^{*}=2m$ and take 
$\alpha=\frac{4}{3}(2m+1)$. Then we have from \eqref{12}
\begin{equation}
\label{inter-1}
\Bigl(\int_{-\infty}^{+\infty}\int_{0}^{+\infty}|v(t,r)\,r^{\gamma_{1}}|^{\alpha m^{*}}r^{\gamma_{2}}drdt\Bigr)^{\frac{1}{\alpha m^{*}}}
\leq C\,\|\vec{v}(0)\|_{\LLL^{m*}},
\end{equation}
where
\[\gamma_{1}=\frac{5m-2}{5m(2m+1)},\quad \gamma_{2}=\frac{2}{5},\]
so that $\gamma_1\alpha m^*+\gamma_2=\alpha-2$.
 Let
\[q=\frac{8m(2m+1)}{8m^{2}-11m+6},\; \sigma=\frac{8m(2m+1)}{5m-2}\,.\]
Then \eqref{eq:GV} yields
\begin{equation}
\label{inter-2}
\Bigl(\int_{-\infty}^{+\infty}\Bigl(\int_{0}^{+\infty}|v(t,r)\,r^{\gamma_{1}}|^{\sigma}r^{\gamma_{2}}dr\Bigr)^{\frac q\sigma}dt\Bigr)^{\frac1q}
\leq C \|\vec{v}(0)\|_{\LLL^2}\,.
\end{equation}
In view of 
\[\frac{1}{m}=\frac{\theta}{2}+\frac{1-\theta}{m^{*}}\,,\,
\frac{1}{2m+1}=\frac{\theta}{q}+\frac{1-\theta}{\alpha m^{*}}\,,\,
\frac{1}{m(2m+1)}=\frac{\theta}{\sigma}+\frac{1-\theta}{\alpha m^{*}}\,,\,\theta=\frac{1}{m-1}\,,\]
and the fact that $\gamma_{1}m(2m+1)+\gamma_{2}=m\,$,
we obtain \eqref{27} by interpolating \eqref{inter-1} and \eqref{inter-2} (see Theorem 5.1.2 in \cite{BeLo76B}). 
\medskip

If $1<m<2$, we set 
\[
m^{*}=\frac{m+1}{2}\,,\;
\alpha=\frac{8(2m+1)}{3m+5}\,,\;\theta=\frac{2(m-1)}{m(3-m)}\,,\;
q=\frac{8(2m+1)}{10-m}\,,\;\sigma=\frac{8(2m+1)}{3m-2}\,,\]
\[
\gamma_{1}=\frac{3m-2}{6m^{2}+11m+4}=\frac{3m-2}{(2m+1)(3m+4)}\,,\quad\gamma_{2}=\frac{6m}{3m+4}\,.
\]
One can verify that \eqref{inter-1} and \eqref{inter-2} along with the interpolation relations as in the first case remain valid.
This completes the proof.
\end{proof}

Using the same argument as above and \eqref{24}, we obtain Proposition \ref{prop:crucial_bis}.
\medskip

We conclude this subsection by some additional Strichartz-type estimates that will be useful in the construction of the profile decomposition in Section \ref{S:profiles} and follow from Proposition \ref{prop:linear-symmetric-estimate} and 
\eqref{radial-edpt-Strichartz}.
\begin{proposition}
\label{prop:subkey-estimate}
Assume $m>2$ and $v(t,x)$ is the solution of the Cauchy problem \eqref{1} with radial initial data $(v_{0},v_{1})\in\mathcal L^{m}$. Let 
\[a=\frac{2m(m-1)(m+2)}{m^{2}+3m-2},\;
b=\frac{2m(m-1)(m+2)}{m-2}\,.\]
Then there exists a constant $C$ such that 
\begin{equation}
\label{subkey-estimate}
\Bigl(\int_{-\infty}^{+\infty}\Bigr(\int_{0}^{+\infty}|v(t,r)|^{b}r^{m}dr\Bigr)^{\frac ab}dt\Bigr)^{\frac{1}{a}}\leq C\, \|\vec{v}(0)\|_{\LLL^m}
\end{equation}
\end{proposition}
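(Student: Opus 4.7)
I would obtain \eqref{subkey-estimate} by complex interpolation between the radial endpoint Strichartz estimate \eqref{radial-edpt-Strichartz} and a well-chosen instance of Proposition \ref{prop:linear-symmetric-estimate}. Identifying $\dot{\HHH}^{1}_{\rm rad}\simeq \LLL^{2}$, the estimate \eqref{radial-edpt-Strichartz} becomes
\[
\|v\|_{L^{2}_{t}L^{\infty}_{x}}\leq C\,\|\vec{v}(0)\|_{\LLL^{2}},
\]
and applying Proposition \ref{prop:linear-symmetric-estimate} with the parameter $m$ there replaced by $2m$ and $\alpha=m+2$ yields
\[
\Bigl(\int_{\R}\int_{0}^{+\infty}|v(t,r)|^{2m(m+2)}\,r^{m}\,dr\,dt\Bigr)^{1/(2m(m+2))}\leq C\,\|\vec{v}(0)\|_{\LLL^{2m}}.
\]
The choice $\alpha=m+2$ is dictated by the requirement $\alpha-2=m$, so that this weight matches the target weight in \eqref{subkey-estimate}; the choice $2m$ for the $\LLL$-parameter is what will eventually produce $\LLL^{m}$ as the interpolation space. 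Since an $L^{\infty}_{r}$ norm does not see the weight, I would rewrite both range spaces in the common form $L^{p}_{t}L^{q}_{r}(r^{m}\,dr)$.

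Viewing $(v_{0},v_{1})\mapsto v$ as a linear map, I have bounded operators $T:\LLL^{2}\to L^{2}_{t}L^{\infty}_{r}(r^{m}\,dr)$ and $T:\LLL^{2m}\to L^{2m(m+2)}_{t}L^{2m(m+2)}_{r}(r^{m}\,dr)$. Because the $\LLL^{p}$ norm is equivalent to a standard weighted $L^{p}$ norm on $(0,\infty)$ (Proposition \ref{P:prelimLW}), complex interpolation of weighted Lebesgue spaces gives $[\LLL^{2},\LLL^{2m}]_{\theta}=\LLL^{m}$ for $\theta=\frac{m-2}{m-1}\in(0,1)$, determined by $\frac{1}{m}=\frac{1-\theta}{2}+\frac{\theta}{2m}$. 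Applying complex interpolation of mixed weighted Lebesgue spaces on the range with this same $\theta$ (as in Theorem 5.1.2 of \cite{BeLo76B}), I would obtain
\[
\|v\|_{L^{a}_{t}L^{b}_{r}(r^{m}\,dr)}\leq C\,\|\vec{v}(0)\|_{\LLL^{m}},
\]
with $\frac{1}{a}=\frac{1-\theta}{2}+\frac{\theta}{2m(m+2)}$ and $\frac{1}{b}=\frac{\theta}{2m(m+2)}$. A routine algebraic check confirms that these values agree with the $a$ and $b$ stated in the proposition.

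The main delicate point is the range interpolation, which combines a mixed-norm structure, an $L^{\infty}_{r}$ endpoint and the nontrivial radial weight $r^{m}$. This is of the same flavour as the interpolation already carried out in the proof of Proposition \ref{prop:key-estimate} (between \eqref{inter-1} and \eqref{inter-2}), and once both endpoints are expressed relative to the common reference measure $r^{m}\,dr$ it is covered by the standard interpolation theorem for weighted mixed $L^{p}$ spaces; the remaining work is then purely arithmetic bookkeeping of the exponents.
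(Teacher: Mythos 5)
Your proposal is correct and follows exactly the paper's proof: the paper also derives the estimate \eqref{E1} (which is \eqref{12} with the $\LLL$-parameter $2m$ and $\alpha=m+2$, so that $\alpha-2=m$) and then complex-interpolates it with the radial endpoint estimate \eqref{radial-edpt-Strichartz}. Your exponent bookkeeping ($\theta=\frac{m-2}{m-1}$ and the resulting $a$, $b$) checks out, and you in fact spell out the interpolation parameters more explicitly than the paper does.
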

\begin{proof}
Indeed, from \eqref{12}, we have 
\begin{equation}
\label{E1}
\Bigl(\int_{-\infty}^{+\infty}\int_{0}^{+\infty}|v(t,r)|^{2m(m+2)}r^{m}drdt\Bigr)^{\frac{1}{2m(m+2)}}\leq C\,\|\vec{v}(0)\|_{\LLL^{2m}}\,.
\end{equation}
Interpolating \eqref{E1} with \eqref{radial-edpt-Strichartz}, we are done.
\end{proof}
The choice of $(a,b)$ above is not suitable in the case $m<2$, where we will use the following estimates:
\begin{proposition}
\label{prop:subkey-estimate-1}
Assume $1<m<2$ and $v(t,x)$ is the solution of the Cauchy problem \eqref{1} with radial initial data $(v_{0},v_{1})\in\mathcal L^{m}$. Let 
\[a=\frac{m(m+2)(3-m)}{m^{2}-m+2},\;
b=\frac{m(m+2)(3-m)}{2(2-m)}\,.\]
Then there exists a constant $C$ such that 
\begin{equation}
\label{subkey-estimate-1}
\Bigl(\int_{-\infty}^{+\infty}\Bigr(\int_{0}^{+\infty}|v(t,r)|^{b}r^{m}dr\Bigr)^{\frac ab}dt\Bigr)^{\frac{1}{a}}\leq C\, \|\vec{v}(0)\|_{\LLL^m}
\end{equation}
\end{proposition}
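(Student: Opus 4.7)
The argument closely parallels that of Proposition \ref{prop:subkey-estimate}, but with the auxiliary regularity $m^{*}$ chosen \emph{below} $m$ rather than above, reflecting the constraint $1<m<2$. The natural choice is $m^{*}=(m+1)/2\in(1,m)$, which places $m$ strictly between the two endpoint regularities $m^{*}$ and $2$, making the interpolation admissible; one can check that $\theta=\frac{2(m-1)}{m(3-m)}\in(0,1)$ throughout this range.

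First I apply the symmetric estimate \eqref{12} with the regularity parameter replaced by $m^{*}$ and the parameter $\alpha=m+2$ (chosen so that $\alpha-2=m$ matches the spatial weight in \eqref{subkey-estimate-1}). This yields
\[
\Bigl(\int_{\R}\int_{0}^{+\infty}|v(t,r)|^{p_{0}}\,r^{m}\,dr\,dt\Bigr)^{\frac{1}{p_{0}}}\leq C\,\|\vec{v}(0)\|_{\LLL^{m^{*}}},\qquad p_{0}=\tfrac{(m+1)(m+2)}{2}.
\]
Second, I invoke the radial endpoint Strichartz inequality \eqref{radial-edpt-Strichartz}, namely $\|v\|_{L^{2}_{t}L^{\infty}_{x}}\leq C\|\vec{v}(0)\|_{\LLL^{2}}$ (the $\dot H^{1}\times L^{2}$ and $\LLL^{2}$ norms coincide up to a constant in the radial case). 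Interpolating these two inequalities (Theorem 5.1.2 of \cite{BeLo76B}) at parameter $\theta$ produces \eqref{subkey-estimate-1}: the three identities
\[
\tfrac{1}{a}=\tfrac{\theta}{2}+\tfrac{1-\theta}{p_{0}},\qquad \tfrac{1}{b}=\tfrac{1-\theta}{p_{0}},\qquad \tfrac{1}{m}=\tfrac{\theta}{2}+\tfrac{1-\theta}{m^{*}}
\]
reduce to the stated values of $a$ and $b$ after a direct algebraic computation strictly parallel to the one behind Proposition \ref{prop:subkey-estimate}.

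The only technical point to justify is the interpolation on the data side, i.e.\ that $[\LLL^{m^{*}},\LLL^{2}]_{\theta}$ embeds continuously into $\LLL^{m}$. The cleanest route is to recast both symmetric inequalities through the one-dimensional representation \eqref{2}--\eqref{def_pm} as mapping properties of the single linear operator $\dot F\mapsto v$ acting from the scalar $L^{p}(\R,d\sigma)$ scale into the appropriate weighted mixed-norm space. Since Proposition \ref{P:prelimLW}\eqref{P3} gives $\|\dot F\|_{L^{p}(\R)}\simeq \|\vec v(0)\|_{\LLL^{p}}$ for every $p\in(1,\infty)$ (via the relations $\dot F(\pm r)=\tfrac12[v]_{\pm}(0,r)$), classical Riesz--Thorin interpolation on $L^{p}(\R)$ applies verbatim and yields \eqref{subkey-estimate-1}. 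Once this observation is in place, the remainder of the argument is bookkeeping.
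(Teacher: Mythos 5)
Your proposal is correct and follows exactly the route of the paper's (very terse) proof: take $m^{*}=(m+1)/2$, apply the symmetric estimate \eqref{12} with $\alpha=m+2$ to get the bound with exponent $p_{0}=(m+1)(m+2)/2$ against $\|\vec v(0)\|_{\LLL^{m^*}}$, and complex-interpolate with \eqref{radial-edpt-Strichartz}; your verification of the exponent identities and your justification of the data-side interpolation via the representation $\dot F\mapsto v$ and the equivalence $\|\dot F\|_{L^{p}(\R)}\approx\|\vec v(0)\|_{\LLL^{p}}$ supply details the paper leaves implicit.
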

\begin{proof}
Let $m^{*}=(m+1)/2$.  From \eqref{12}, we have 
\begin{equation}
\label{E1-1}
\Bigl(\int_{-\infty}^{+\infty}\int_{0}^{+\infty}|v(t,r)|^{m^{*}(m+2)}r^{m}drdt\Bigr)^{\frac{1}{(m+2)m^*}}\leq C\,\|\vec{v}(0)\|_{\LLL^{m^*}}\,.
\end{equation}
Interpolating \eqref{E1-1} with \eqref{radial-edpt-Strichartz}, we are done.
\end{proof}
\begin{remark}
In both propositions, we have:
$m<a<2m+1<b/m<\infty$.
\end{remark}

\begin{remark}	
The interpolations we used in the above two propositions are based on the complex method.
In fact, we used Theorem 5.1.1 and Theorem 5.1.2 in \cite{BeLo76B}. 
\end{remark}
\begin{remark}
Notice that when $m=2$, $(a,b)=(2,\infty)$ coincides with the end-point Strichartz estimate \eqref{radial-edpt-Strichartz}.
\end{remark}

\subsection{Local well-posedness}

Consider here the Cauchy problem for the nonlinear wave
equations \eqref{NLW}, \eqref{ID}, with $(u_0,u_1)\in \LLL^m$, $m>1$. 
In this subsection, we prove the following small-data well-posedness statement, which implies Theorem \ref{T:LWP}:
\begin{proposition}
\label{prop:LWP}
There exists $\delta_0>0$ such that if $0\in I\subset \mathbb R$ is an interval and 
\begin{equation}
\label{local-small}
\bigl\|S_{\rm L}(t)(u_{0},u_{1})\bigr\|_{S(I)}
=\delta\leq \delta_0\,,
\end{equation}
then there exists a unique solution $u\in S(I)$ such that 
$\vec{u}\in C^0(I,\LLL^m)$ to the Cauchy problem \eqref{NLW}, \eqref{ID} for $t\in I$. Moreover:
\begin{equation}
\label{solution-est-1}
\|u\|_{S(I)}\leq 2\delta
\end{equation}
and
we have
\begin{equation}
\label{solution-est-2}
\sup_{t\in I}\|\vec{v}(t)\|_{\LLL^m}\leq C_{m}(\|(u_{0},u_{1})\|_{\mathcal L^{m}}+\delta^{2m+1})\,.
\end{equation}
\end{proposition}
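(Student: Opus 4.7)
The plan is to run the standard Picard iteration on the Duhamel formulation in the Strichartz norm $S(I)$, using Proposition~\ref{prop:key-estimate} for the linear evolution, Proposition~\ref{prop:crucial_bis} for the inhomogeneous piece, and the energy estimate \eqref{energy-est} to transfer the $S(I)$-control to $\LLL^m$. The argument is efficient because the norm $S(I)$ is perfectly tailored to the nonlinearity $|u|^{2m}u$: a direct computation gives the key identity
$$\bigl\||u|^{2m}u\bigr\|_{L^1_t L^m_x(r^m\,dr)} = \|u\|_{S(I)}^{\,2m+1}.$$

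Define the Duhamel operator
$$\Phi(u)(t) := S_{\rm L}(t)(u_0,u_1) + \iota\int_0^t \frac{\sin\bigl((t-s)\sqrt{-\Delta}\bigr)}{\sqrt{-\Delta}}\bigl(|u|^{2m}u\bigr)(s)\,ds,$$
and work on the complete metric space $B := \{u\in S(I):\|u\|_{S(I)}\leq 2\delta\}$ with distance $d(u,v) = \|u-v\|_{S(I)}$. Applying Proposition~\ref{prop:crucial_bis} to $f = \iota|u|^{2m}u$ together with the identity above,
$$\|\Phi(u)\|_{S(I)} \leq \|S_{\rm L}(t)(u_0,u_1)\|_{S(I)} + C\|u\|_{S(I)}^{\,2m+1} \leq \delta + C(2\delta)^{2m+1},$$
which is at most $2\delta$ as soon as $\delta_0$ is small enough that $C(2\delta_0)^{2m} \leq 1/2$.

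For the contraction, the elementary bound $\bigl||u|^{2m}u - |v|^{2m}v\bigr| \lesssim (|u|^{2m}+|v|^{2m})|u-v|$ combined with a H\"older inequality in the weighted space $L^m(r^m\,dr)$ (with conjugate exponents $\tfrac{2m+1}{2m}$ and $2m+1$ applied to $|u|^{2m^2}$ and $|u-v|^m$) and then a H\"older in time with the same exponents produces
$$\bigl\||u|^{2m}u-|v|^{2m}v\bigr\|_{L^1_t L^m_x(r^m dr)} \lesssim \bigl(\|u\|_{S(I)}^{\,2m}+\|v\|_{S(I)}^{\,2m}\bigr)\|u-v\|_{S(I)}.$$
Feeding this into Proposition~\ref{prop:crucial_bis} and shrinking $\delta_0$ once more yields $d(\Phi(u),\Phi(v)) \leq \tfrac12\, d(u,v)$ on $B$. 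Banach's fixed-point theorem then produces a unique $u\in B$ solving the Duhamel equation, which gives \eqref{solution-est-1}; uniqueness inside $C^0(I,\LLL^m)\cap S(I)$ is obtained by partitioning $I$ into consecutive sub-intervals on which the $S$-norm is small and reapplying the same contraction.

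Estimate \eqref{solution-est-2} is then immediate: plugging $f=\iota|u|^{2m}u$ into the energy inequality \eqref{energy-est} and invoking the identity displayed above,
$$\sup_{t\in I}\|\vec{u}(t)\|_{\LLL^m} \lesssim \|(u_0,u_1)\|_{\LLL^m} + \|u\|_{S(I)}^{\,2m+1} \lesssim \|(u_0,u_1)\|_{\LLL^m}+\delta^{2m+1}.$$
Continuity $\vec u\in C^0(I,\LLL^m)$ follows by approximating the source $|u|^{2m}u$ by smooth compactly supported functions (for which the Duhamel part is manifestly continuous in time with values in $\LLL^m$) and passing to the limit via \eqref{energy-est}, combined with strong continuity of the linear propagator $t\mapsto S_{\rm L}(t)(u_0,u_1)$ in $\LLL^m$. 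There is no serious obstruction in this scheme; the only slightly delicate point is the H\"older bookkeeping for the nonlinear difference, where the exponents $\tfrac{2m+1}{2m}$ and $2m+1$ in both space and time are forced by the requirement that both factors ultimately carry the weight $r^m$ and the exponent $m(2m+1)$ appearing in the definition of $\|\cdot\|_{S(I)}$.
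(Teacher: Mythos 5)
Your proposal is correct and follows essentially the same route as the paper: a contraction mapping argument on the ball $\{\|u\|_{S(I)}\leq 2\delta\}$ for the Duhamel operator, using Proposition~\ref{prop:crucial_bis} together with the identity $\||u|^{2m}u\|_{L^1_tL^m_x(r^m dr)}=\|u\|_{S(I)}^{2m+1}$ and H\"older's inequality for the difference estimate, then the energy inequality \eqref{energy-est} for \eqref{solution-est-2}. Your H\"older bookkeeping with exponents $\tfrac{2m+1}{2m}$ and $2m+1$ checks out, and the additional remarks on uniqueness and time-continuity are standard supplements consistent with the paper's (terser) treatment.
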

\begin{remark}
From the assumption on the initial data and Strichartz type inequality \eqref{27}, we see that for each $(u_0,u_1)\in \LLL^m$ and $\delta>0$, there is an interval $I=I(u_0,u_1,\delta)$ such that \eqref{local-small} holds. Using this observation and standard arguments, it is easy to construct  from Proposition \ref{prop:LWP} a maximal solution of \eqref{NLW}, \eqref{ID} that satisfies the conclusion of Theorem \ref{T:LWP}.
\end{remark}

\begin{proof}
Let $C_{0}$ be the constant in the estimates \eqref{27} and \eqref{27-b} .
Consider 
\[\mathfrak X=\{v\;\text{on} \; \mathbb R\times \mathbb R^{3}\mid v(t,x)=v(t,|x|)\,,\,
\|v\|_{S(I)}\leq 2\delta\}\,,\]
where
\[0<\delta<\min(C_{0}^{-\frac{1}{p-1}}2^{-\frac{p}{p-1}},2^{-\frac{p+2}{p-1}}(pC_{0})^{-\frac{1}{p-1}}),\,
p=2m+1\,.\]
Define
\begin{equation}
\label{35}
\Phi_{(u_{0},u_{1})}(v)=S_{\rm L}(t)(u_{0},u_{1})
+\iota\int_{0}^{t}\frac{\sin (t-s)\sqrt{-\Delta}}{\sqrt{-\Delta}}|v|^{2m}v(s)\,ds\,.
\end{equation}
If $v,w \in \mathfrak X$, we have from \eqref{27-b}
\begin{equation*}
\label{36}
\bigl\|\Phi_{(u_{0},u_{1})}(v)\bigr\|_{S(I)}
\leq \delta+C_{0}(2\delta)^{p}\leq 2\delta\,,
\end{equation*}
and by H\"older inequality
\begin{align*}
\bigl\|\Phi_{(u_{0},u_{1})}(v)-\Phi_{(u_{0},v_{0})}(w)
\bigr\|_{S(I)}
\leq&\,2\,p\,C_{0}(\|v\|^{p-1}_{S(I)}+\|w\|^{p-1}_{S(I)})
\|v-w\|_{S(I)}\\
\leq &\,4\,p\,C_{0}(2\delta)^{p-1}
\|v-w\|_{S(I)}\\
\leq&\,\frac12\, \|v-w\|_{S(I)}\,,
\end{align*}
for all $v, \, w\in \mathfrak X$ . Thus, there exists a unique fixed point $u\in \mathfrak X$ such that 
$$u=\Phi_{u_{0}, u_{1}}(u)\,.$$
Notice that \eqref{solution-est-1} follows from the construction and \eqref{solution-est-2} follows from the energy estimates and \eqref{solution-est-1}.
\end{proof}
\subsection{Exterior long-time perturbation theory}
We conclude this section by a long-time perturbation theory result for equation \eqref{NLW} with initial data in $\LLL^m$. Taking into account the finite speed of propagation, we will give a statement that works as well when the estimates are restricted to the exterior $\{r>A+|t|\}$ of a wave cone. This generalization will be very useful when using the channels of energy arguments.
\begin{lemma}
 \label{L:ELTPT}
 Let $M>0$. There exists $\eps_M>0$, $C_M>0$ with the following properties.
 Let $T\in (0,+\infty]$, $u,\tilde{u}\in S((0,T))$ such that $\vec{u}, \vec{\tilde{u}}\in C^0([0,T),\LLL^m)$. Assume that $u$ is a solution of \eqref{NLW}, \eqref{ID} on $[0,T)$ and that\footnote{in the sense that $\tilde{u}$ satisfies the usual integral equation}
 \begin{equation}
 \label{eq_tildeu}
  \left\{\begin{aligned}
\partial_t^2\tilde{u}-\Delta\tilde{u}=\iota\indic_{\{r\geq (A+|t|)_+\}} |\tilde{u}|^{2m}\tilde{u}+e\\
\tilde{u}_{\restriction t=0}=(\tilde{u}_0,\tilde{u}_1),
         \end{aligned}\right.
 \end{equation} 
 where $e\in L^{1}_tL^m_x(r^m\,dr)$, $A\in \RR\cup\{-\infty\}$. Let 
 $$R_{\rm L}(t)=S_{\rm L}(t)\left( (u_0,u_1)-(\tilde{u}_0,\tilde{u}_1) \right).$$
 Assume
 \begin{gather}
\label{LTPT_bound_M}
 \left\|\tilde{u}\right\|_{S\left(\{t\in (0,T),\; r\geq (A+|t|)_+\}\right)}\leq M\\
 \label{LTPT_small}
 \int_0^T\left( \int_{(A+|t|)_+}^{+\infty} |r\,e|^m\,dr\right)^{\frac 1m}\,dt+\left\|R_{\rm L}\right\|_{S(\left\{t\in [0,T),\; r\geq (A+|t|)_+\right\})}=\eps\leq \eps_M.
 \end{gather}
 Then $u(t)=\tilde{u}(t)+R_{\rm L}(t)+\epsilon(t)$ with 
 $$ \left\|\epsilon\right\|_{S\left( \{t\in[0,T),\; r\geq (A+|t|)_+\} \right)}+\sup_{t\in [0,T)} \int_{(A+|t|)_+} |r\partial_{t,r}\epsilon|^m\,dr\leq C_M\eps.$$
\end{lemma}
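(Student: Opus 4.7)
The plan is to run a standard long-time perturbation argument adapted to the exterior of the wave cone $\{r > (A+|t|)_+\}$: I partition $[0,T)$ into finitely many subintervals on which the $S$-norm of $\tilde u$ restricted to the cone exterior is small, then iterate a small-data estimate coming from the Strichartz bounds of Propositions \ref{prop:key-estimate}, \ref{prop:crucial_bis} together with the energy estimate \eqref{energy-est}. Setting $\epsilon = u - \tilde u - R_{\rm L}$, I note that on $\{r > (A + |t|)_+\}$,
\begin{equation*}
 (\partial_t^2 - \Delta)\epsilon = \iota\bigl(|u|^{2m}u - |\tilde u|^{2m}\tilde u\bigr) - e,\qquad \vec\epsilon(0) = 0,
\end{equation*}
with $u = \tilde u + R_{\rm L} + \epsilon$ there. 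The pointwise bound $\bigl||u|^{2m}u - |\tilde u|^{2m}\tilde u\bigr| \lesssim (|u|^{2m} + |\tilde u|^{2m})|u - \tilde u|$ combined with H\"older's inequality and the identity $\||f|^{2m+1}\|_{L^m_x(r^m dr)} = \|f\|_{L^{(2m+1)m}_x(r^m dr)}^{2m+1}$ gives, on any $\Omega \subset \{r > (A + |t|)_+\}$,
\begin{equation*}
 \bigl\|\,|u|^{2m}u - |\tilde u|^{2m}\tilde u\,\bigr\|_{L^1_tL^m_x(r^m dr;\,\Omega)} \lesssim \bigl(\|\tilde u\|_{S(\Omega)}^{2m} + \|R_{\rm L}\|_{S(\Omega)}^{2m} + \|\epsilon\|_{S(\Omega)}^{2m}\bigr)\bigl(\|R_{\rm L}\|_{S(\Omega)} + \|\epsilon\|_{S(\Omega)}\bigr).
\end{equation*}

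Next I fix $\delta_0 > 0$ small, depending only on the Strichartz constant of Proposition \ref{prop:crucial_bis}, and, using \eqref{LTPT_bound_M}, split $[0,T)$ into $N = N(M)$ consecutive intervals $I_j = [t_j, t_{j+1}]$ with $\|\tilde u\|_{S(I_j \cap \{r > (A + |t|)_+\})} \leq \delta_0$. On each $I_j$, finite speed of propagation allows me to extend the restriction of $\vec\epsilon(t_j)$ to $\{r > A + t_j\}$ to the full space by a bounded operator in $\LLL^m$---such an extension exists thanks to the local character of the $\LLL^m$ norm captured by Proposition \ref{P:prelim}(4)---and to solve the corresponding full-space wave equation with source $\indic_{\{r > (A + |t|)_+\}}$ times the right-hand side of the equation for $\epsilon$ above. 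The resulting full-space solution coincides with $\epsilon$ on $I_j \cap \{r > (A+|t|)_+\}$, so Proposition \ref{prop:crucial_bis} and the energy estimate \eqref{energy-est}, combined with \eqref{LTPT_small} and the nonlinear bound above, give, letting $N_j := \|\epsilon\|_{S(I_j \cap \{r > (A+|t|)_+\})} + \sup_{t \in I_j}\|\vec\epsilon(t)\|_{\LLL^m(\{r > (A + |t|)_+\})}$,
\begin{equation*}
 N_j \leq C\bigl(N_{j-1} + \eps\bigr) + C\bigl(\delta_0^{2m} + N_j^{2m}\bigr)(\eps + N_j),
\end{equation*}
with the convention $N_{-1} := 0$. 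For $\delta_0$, $\eps_M$ (hence $\eps$) and $N_j$ small, a continuity bootstrap starting from $\vec\epsilon(0) = 0$ yields $N_j \leq 2C(N_{j-1} + \eps)$, and iterating over the $N(M)$ subintervals produces $N_{N-1} \leq C_M \eps$, which is the desired conclusion.

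The only genuinely delicate step is the finite-speed-of-propagation reduction to a full-space estimate, since the Strichartz bounds of \S\ref{SS:key_Stri} are stated on $\RR^3$ rather than on exterior cones; this is where Proposition \ref{P:prelim}(4), expressing the local nature of $\LLL^m$ on exteriors $\{r > R\}$, plays the role taken by Sobolev extension operators in the $\dot{\HHH}^{s_c}$ perturbation theory. Everything else---the H\"older bound on the nonlinear difference and the continuity bootstrap---is routine once the correct exterior-restricted function spaces are set up.
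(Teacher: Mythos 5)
Your proposal is correct, and the setup (defining $\epsilon=u-\tilde u-R_{\rm L}$, writing its equation on the cone exterior, and bounding the nonlinear difference by H\"older through the $S$-norms of $\tilde u$, $R_{\rm L}$ and $\epsilon$) coincides with the paper's. The divergence is in how the long-time estimate is closed. The paper works on the whole interval $[0,\theta]$ at once: combining the exterior Strichartz/energy bound with the H\"older estimate it arrives at the integral inequality
\begin{equation*}
\|\mfkE\|_{L^{2m+1}(0,\theta)}\leq  C\Bigl(\eps+\eps^{2m+1}+M^{2m}\eps+\|\mfkE\|_{L^{2m+1}(0,\theta)}^{2m+1}+\int_0^{\theta}\mfkE(t)\,\mfktU(t)^{2m}\,dt\Bigr),
\end{equation*}
where $\mfkE,\mfktU$ are the exterior $L^{(2m+1)m}_x(r^m dr)$ norms of $\epsilon$ and $\tilde u$, and then invokes an abstract nonlinear Gr\"onwall lemma (Lemma 8.1 of \cite{FaXiCa11}) to absorb the term $\int_0^\theta \mfkE\,\mfktU^{2m}$ at the price of a factor $\Phi(CM^{2m})=2\Gamma(3+2CM^{2m})$, finishing with a single bootstrap in $\theta$. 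You instead use the classical subdivision of $[0,T)$ into $N(M)$ intervals on which $\|\tilde u\|_{S}$ is below a small threshold $\delta_0$, running a small-data contraction on each and propagating the data through the endpoints. Both routes are standard and both yield $C_M\eps$; the Gr\"onwall route gives an explicit Gamma-function dependence of $C_M$ on $M$ and only needs the finite-speed-of-propagation/extension reduction once, whereas your subdivision is more elementary (no external Gr\"onwall lemma) but requires re-extending $\vec\epsilon(t_j)$ from $\{r>A+t_j\}$ at each of the $N(M)$ endpoints --- which, as you correctly note, is legitimate because $\TTT_R$ is bounded on $\LLL^m$ by \eqref{norm_TA} and Proposition \ref{P:prelim}\,(4). (Minor remark: your sign $-e$ in the equation for $\epsilon$ is the correct one; the paper's proof writes $+e$, which is immaterial since only $|e|$ enters the estimates.)
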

In the lemma, we have denoted by $(A+|t|)_+=\max(0,A+|t|)$. By convention, if $A=-\infty$, this quantity equals $0$ for all $t$. Note that the case $A=-\infty$ corresponds to the usual long-time perturbation theory statement (see e.g. \cite{TaVi05})\footnote{traditionally the ``linear part'' of the solution $R_{\rm L}(t)$ is incorporated into $\tilde{u}$. For convenience we preferred to distinguish between these two components}.
\begin{proof}[Sketch of the proof]
 We let, for $t\in [0,T)$.
 \begin{align*}
 \mfkE(t)&=\left( \int_{(A+|t|)_+}^{+\infty}\left|\epsilon(t,r)\right|^{(2m+1)m}r^m\,dr \right)^{\frac{1}{(2m+1)m}}\\
\mfktU(t)&=\left( \int_{(A+|t|)_+}^{+\infty}\left|\tilde{u}(t,r)\right|^{(2m+1)m}r^m\,dr \right)^{\frac{1}{(2m+1)m}}\\
\mfkR(t)&=\left( \int_{(A+|t|)_+}^{+\infty}\left|R_{\rm L}(t,r)\right|^{(2m+1)m}r^m\,dr \right)^{\frac{1}{(2m+1)m}}.
 \end{align*}
 By the assumptions \eqref{LTPT_bound_M}, \eqref{LTPT_small}, 
 $$\|\mfktU\|_{L^{2m+1}(0,T)}\leq M,\quad \|\mfkR\|_{L^{2m+1}(0,T)}\leq \eps.$$
 Since 
 $$(\partial_t^2-\Delta)\epsilon=\iota\left(|u|^{2m}u-|\tilde{u}|^{2m}\tilde{u}\right)+e,$$
 we obtain by \eqref{energy-est}, Strichartz estimates and finite speed of propagation that for all $\theta\in [0,T)$,
 \begin{multline}
 \label{Str_LTPT}
 \sup_{t\in [0,\theta]}\left[\left(\int_{(A+|t|)_+}^{+\infty}\left|r\partial_{t,r}\epsilon\right|^m\,dr\right)^\frac{1}{m}+
\|\vec{\epsilon}(t)\|_{\LLL^m}+\|\mfkE(t)\|_{L^{2m+1}}\right]\\
 \leq C\int_0^{\theta}\left(\int_{(A+|t|)_+}^{+\infty}\left(\left||\tilde{u}|^{2m}\tilde{u}-|u|^{2m}u\right|^m+|e|^m\right)r^mdr\right)^{\frac 1m}\,dt.
 \end{multline}
 We have 
 $$\int_0^{\theta}\left(\int_{(A+|t|)_+}^{+\infty}|e|^mr^mdr\right)^{\frac 1m}\,dt \leq \eps$$
 and, using H\"older's inequality 
 \begin{multline*}
\int_0^{\theta}\left(\int_{(A+|t|)_+}^{+\infty}\left||\tilde{u}|^{2m}\tilde{u}-|u|^{2m}u\right|^mr^mdr\right)^{\frac 1m}\,dt 
 \\ \lesssim \int_0^{\theta}\left( \mfkE(t)+\mfkR(t) \right)\left( \mfktU(t)^{2m}+\mfkR(t)^{2m}+\mfkE(t)^{2m} \right)\,dt\\
 \leq C\left(\|\mfkE\|_{L^{2m+1}(0,\theta)}^{2m+1}+\|\mfkR\|_{L^{2m+1}(0,\theta)}^{2m+1}+\int_0^{\theta}\mfkR(t)\mfktU(t)^{2m}\,dt+\int_0^{\theta}\mfkE(t)\mfktU(t)^{2m}\,dt\right)\\
 \leq C\left(\|\mfkE\|_{L^{2m+1}(0,\theta)}^{2m+1}+\eps^{2m+1}+M^{2m}\eps+\int_0^{\theta}\mfkE(t)\mfktU(t)^{2m}\,dt\right).
 \end{multline*}
Collecting the above, we obtain, for all $\theta\in [0,T)$,
\begin{equation*}
\|\mfkE\|_{L^{2m+1}(0,\theta)}\leq  C\left(\eps+\eps^{2m+1}+M^{2m}\eps+\|\mfkE\|_{L^{2m+1}(0,\theta)}^{2m+1}+\int_0^{\theta}\mfkE(t)\mfktU(t)^{2m}\,dt\right)
\end{equation*}
This is a Gr\"onwall-type inequality classical in this context. Using e.g. Lemma 8.1 in \cite{FaXiCa11}, we deduce that for all $\theta\in [0,T)$,
\begin{equation*}
 \|\mfkE\|_{L^{2m+1}(0,\theta)}\leq C\left(\eps+\eps^{2m+1}+M^{2m}\eps+\|\mfkE\|_{L^{2m+1}(0,\theta)}^{2m+1}\right)\Phi(CM^{2m}),
\end{equation*}
where $\Phi(s)=2\Gamma(3+2s)$, and $\Gamma$ is the usual Gamma function. Using a standard bootstrap argument, we deduce, assuming that $\eps\leq \eps_M$ for some small $\eps_M$
$$\|\mfkE\|_{L^{2m+1}(0,\theta)}\leq C_M\eps,$$
and going back to \eqref{Str_LTPT} and the computations that follow this inequality, we obtain also the desired bound on the $\LLL^m$ norm of $\epsilon$. The proof of the lemma is complete.
\end{proof}

\section{Profile decomposition}
\label{S:profiles}
\subsection{Linear profile decomposition}
The main result of this section is the following:
\begin{theorem}
\label{T:profiles}
 Let $(u_{{\rm L},n})_n$ be a sequence of radial solutions of \eqref{LW} such that $(\vec{u}_{{\rm L},n}(0))_n$ is bounded in $\LLL^m$. Then there exists a subsequence of $(u_{{\rm L},n})_n$ {\rm (}still denoted by $(u_{{\rm L},n})_n${\rm)} and, for all $j\geq 1$, a solution $U^j_{\rm L}$ of \eqref{LW} with initial data $(U_0^j,U_1^j)$ in $\LLL^m$ and sequences $(\lambda_{j,n})_n\in(0,\infty)^{\NN}$, $(t_{j,n})_n\in \RR^{\NN}$ such that the following properties hold.
 \begin{itemize}
  \item Pseudo-orthogonality: for all $j,k\geq 1$, one has 
  \begin{equation}
   \label{P-O} j\neq k\Longrightarrow \lim_{n\to\infty}\frac{\lambda_{j,n}}{\lambda_{k,n}}+\frac{\lambda_{k,n}}{\lambda_{j,n}}+\frac{|t_{j,n}-t_{k,n}|}{\lambda_{j,n}}=+\infty,
  \end{equation} 
  \item Weak convergence: for all $j\geq 1$,
  \begin{equation}
   \label{WeakCV}
  \left( \lambda_{j,n}^{\frac{1}{m}} u_{{\rm L},n}(t_{j,n},\lambda_{j,n}\cdot), \lambda_{j,n}^{\frac{1}{m}+1} \partial_t u_{{\rm L},n}(t_{j,n},\lambda_{j,n}\cdot)\right)\xrightharpoonup[n\to\infty]{}\left( U_0^j,U_1^j \right),
  \end{equation} 
   weakly in $\LLL^m$.
  \item Bessel-type inequality: for all $J\geq 1$,
  \begin{equation}
   \label{Pythagorean}
\lim_{n\to\infty} E_m(u_{0,n},u_{1,n})-\sum_{j=1}^J E_m\left( \vec{U}^j_{\rm L}(0) \right)\geq 0,
  \end{equation} 
 \item Vanishing in the dispersive norm:
 \begin{equation}
  \label{dispersive}
  \lim_{J\to\infty} \lim_{n\to\infty} \|w_n^J\|_{S(\RR)}=0,
 \end{equation} 
\end{itemize}
 where 
 \begin{gather}
\label{wnJ}
 w_n^J(t,x)=u_{{\rm L},n}(t,x)-\sum_{j=1}^J U_{{\rm L},n}^j(t,x),\\
\label{ULnJ}
 U_{{\rm L},n}^j(t,x)=\frac{1}{\lambda_{j,n}^{\frac{1}{m}}}U^j_{\rm L}\left( \frac{t-t_{j,n}}{\lambda_{j,n}},\frac{x}{\lambda_{j,n}} \right).
 \end{gather}
\end{theorem}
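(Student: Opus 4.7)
The plan is to adapt the Bahouri--G\'erard profile extraction \cite{BaGe99} to the non-Hilbertian space $\LLL^m$. The two main ingredients are: a refined Sobolev embedding of Chamorro type \cite{Chamorro11} adapted to $\LLL^m$ via the mixed-norm Strichartz estimates of Propositions \ref{prop:key-estimate}, \ref{prop:subkey-estimate}, and \ref{prop:subkey-estimate-1}, which serves to detect bubbles of concentration and extract profiles; and the pair of null derivatives $[v]_\pm(t,r)=2\dot F(t\pm r)$ from \eqref{def_pm} whose $L^m(dr)$-norms control the modified energy $E_m$. Because each $U^j_{{\rm L},n}$ is a translation-dilation of the fixed solution $U^j_{\rm L}$, its null derivatives at $t=0$ are essentially supported on translates/dilates of a fixed compact set, and the parameter orthogonality \eqref{P-O} is exactly the geometric statement that those supports separate as $n\to\infty$; this will act as a substitute for Hilbert-space orthogonality.

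Combining Proposition \ref{prop:subkey-estimate} (for $m>2$) or Proposition \ref{prop:subkey-estimate-1} (for $1<m<2$) with a trivial pointwise control and real interpolation yields a refined inequality of the form
\[
\|v\|_{S(\R)}\leq C\,\|\vec v(0)\|_{\LLL^m}^{\theta}\,
\Bigl(\sup_{\tau\in\R,\,\mu>0}\mu^{1/m}\bigl|v(\tau,\mu)\bigr|\Bigr)^{1-\theta},
\]
for some $\theta\in(0,1)$. If $\limsup_n \|u_{{\rm L},n}\|_{S(\R)}>0$, this produces sequences $(t_{1,n},\lambda_{1,n})\in\R\times(0,\infty)$ at which $\lambda_{1,n}^{1/m}u_{{\rm L},n}(t_{1,n},\lambda_{1,n})$ is bounded below. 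By reflexivity of $\LLL^m$ (which embeds into $L^m(r^m\,dr)^2$ with $m\in(1,\infty)$) the rescaled data admits, after extraction, a weak limit $(U_0^1,U_1^1)\neq 0$, yielding the first profile; the construction is iterated on the remainder $w_n^1=u_{{\rm L},n}-U_{{\rm L},n}^1$. Property \eqref{P-O} is obtained by the standard contradiction argument: if two parameter families fail to be pseudo-orthogonal, the weak limit defining the later profile \eqref{WeakCV} collapses to zero.

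The technical core, and the main obstacle, is the Bessel inequality \eqref{Pythagorean}, which replaces the usual Pythagorean expansion. Using \eqref{def_pm} together with the definition \eqref{ULnJ} one computes
\[
[U^j_{{\rm L},n}]_\pm(0,r)
=2\,\lambda_{j,n}^{-1/m}\,\dot F_j\!\left(\frac{\pm r - t_{j,n}}{\lambda_{j,n}}\right),
\]
so by a density argument one may assume each $\dot F_j$ is compactly supported; then $[U^j_{{\rm L},n}]_\pm(0,\cdot)$ lives in an interval $I_{j,n}^\pm\subset(0,\infty)$ of length $O(\lambda_{j,n})$. The orthogonality \eqref{P-O} is equivalent to the geometric fact that $|I_{j,n}^\pm\cap I_{k,n}^\pm|=o(\min(\lambda_{j,n},\lambda_{k,n}))$ for $j\neq k$ and $n$ large. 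On each $I_{j,n}^\pm$, the weak convergence \eqref{WeakCV} combined with local compact embeddings yields a.e.\ convergence of the rescaled $[u_{{\rm L},n}]_\pm(0,\cdot)$ to $[U^j_{\rm L}]_\pm(0,\cdot)$, and the Brezis--Lieb lemma gives
\[
\int_{I_{j,n}^\pm}\!\bigl|[u_{{\rm L},n}]_\pm(0,r)\bigr|^m dr
= \int\!\bigl|[U^j_{\rm L}]_\pm(0,\rho)\bigr|^m d\rho + \int_{I_{j,n}^\pm}\!\bigl|[w_n^J]_\pm(0,r)\bigr|^m dr + o(1).
\]
Summing over $1\leq j\leq J$, using the almost-disjointness of the $I_{j,n}^\pm$, discarding the non-negative contributions from $[w_n^J]_\pm$ on the exceptional sets, and invoking Proposition \ref{P:prelimLW} to convert back to $E_m$, yields \eqref{Pythagorean}. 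I expect this Brezis--Lieb bookkeeping (at many different scales simultaneously) to be the delicate point, since the approximation of $\dot F_j$ by compactly supported functions must be uniform in $n$.

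Finally, \eqref{dispersive} follows from \eqref{Pythagorean} combined with the refined embedding. Since $\sum_j E_m(\vec U^j_{\rm L}(0))$ is bounded by $\limsup_n E_m(\vec u_{{\rm L},n}(0))<\infty$, one has $E_m(\vec U^j_{\rm L}(0))\to 0$ as $j\to\infty$. If \eqref{dispersive} were to fail along some subsequence, the refined embedding applied to $w_n^J$ would produce, by the same extraction procedure, a $(J+1)$-st profile whose $\LLL^m$-norm (and hence $E_m$) is bounded below by a positive power of $\limsup_n\|w_n^J\|_{S(\R)}$ uniformly in $J$, contradicting the decay of $E_m(\vec U^j_{\rm L}(0))$. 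This completes the strategy.
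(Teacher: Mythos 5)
Your overall architecture (refined Sobolev embedding to detect concentration, reduction of $E_m$ to the null derivatives $[v]_\pm(0,r)=2\dot F(\pm r)$, decoupling of profiles via separation of the supports of the $\dot F^j$ after a density reduction) matches the paper's, and your extraction step — replacing the Besov norm by the weighted sup $\sup_{\tau,\mu}\mu^{1/m}|v(\tau,\mu)|$, which indeed dominates the $\dot B^{-1/m}_{\infty,\infty}(\RR^{[m]+1})$ norm of a radial function and localizes the concentration at radius $\approx\lambda_n$ — is a legitimate variant that even sidesteps the translation parameter $x_n$ the paper has to handle in Cases 1 and 2 of the proof of Proposition \ref{P:wLn}. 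The genuine gap is in your proof of the Bessel-type inequality \eqref{Pythagorean}. The Brezis--Lieb lemma requires \emph{almost everywhere} convergence of the rescaled $[u_{{\rm L},n}]_\pm(0,\cdot)$ to $[U^j_{\rm L}]_\pm(0,\cdot)$, and this does not follow from the weak convergence \eqref{WeakCV}: the quantities $[u_{{\rm L},n}]_\pm(0,r)=\partial_r(ru_{0,n})\pm ru_{1,n}$ are merely weakly convergent in $L^m(dr)$, and there is no compact embedding producing pointwise convergence of these (first-order) quantities from weak convergence of the data in $\LLL^m$. A remainder of the form $\sin(n r)\chi(r)$ added to $ru_{1,n}$ is weakly null after every admissible rescaling yet converges nowhere, and it changes $\int|[u_{{\rm L},n}]_\pm|^m\,dr$ by a fixed positive amount; your identity on $I_{j,n}^\pm$ fails for such sequences. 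This is not a repairable bookkeeping issue: if Brezis--Lieb were applicable one would obtain an asymptotic Pythagorean \emph{equality}, which the paper points out is false for $m\neq 2$ (see the reference to the example on p.~387 of \cite{Jaffard99}).

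The paper's route around this is a duality argument that uses only weak convergence. One introduces the dual objects $\Phi^j_{n,{\rm L}}$ with $[\Phi^j_{n,{\rm L}}]_\pm(0)=|[U^j_{{\rm L},n}]_\pm(0)|^{m-2}[U^j_{{\rm L},n}]_\pm(0)\in L^{m'}$, writes $E_m(\vec U^j_{\rm L}(0))=\sum_\pm\int[\Phi^j_{n,{\rm L}}]_\pm(0)[U^j_{{\rm L},n}]_\pm(0)\,dr$, and replaces $U^j_{{\rm L},n}$ by $u_{{\rm L},n}$ up to $o(1)$ using \eqref{H:Pythagorean} (the pairing of a concentrating $L^{m'}$ function against the weakly null remainder vanishes) together with the decoupling of Step 1 to kill the cross terms $j\neq k$. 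H\"older's inequality and the decoupling of the $\Phi^j_{n,{\rm L}}$ in $E_{m'}$ then give $\sum_{j\le J}E_m(\vec U^j_{\rm L}(0))\le\bigl(\sum_{j\le J}E_m(\vec U^j_{\rm L}(0))\bigr)^{1/m'}\bigl(\limsup_n E_m(\vec u_{{\rm L},n}(0))\bigr)^{1/m}$, which is exactly \eqref{Pythagorean} and nothing more. You should replace your Brezis--Lieb step by an argument of this type (or otherwise justify strong/a.e.\ convergence, which is not available here). The remaining components of your proposal — pseudo-orthogonality by the standard contradiction, and \eqref{dispersive} from the decay of $E_m(\vec U^j_{\rm L}(0))$ plus the refined embedding — are sound once \eqref{Pythagorean} is established.
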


Theorem \ref{T:profiles} generalizes (in the radial setting) the profile decomposition of Bahouri and G\'erard \cite{BaGe99} to sequences that are bounded in $\LLL^m$ instead of the classical energy space. The only difference between the two decomposition is the fact that the Pythagorean expansion proved in \cite{BaGe99} is replaced by the weaker property \eqref{Pythagorean}. One cannot hope, in this context, to have an exact Pythagorean expansion: see the example p.387 of \cite{Jaffard99}.
\medskip

The proof of Theorem \ref{T:profiles} is based on the following two propositions that we will prove in Subsection \ref{SS:0Strichartz} and \ref{SS:Bessel} respectively.

\begin{proposition}
\label{P:wLn}
Let $(u_{{\rm L},n})_{n}$ be a sequence of radial solutions to
the linear wave equation and denote by $(u_{0,n}, u_{1,n})=\vec{u}_{{\rm L},n}(0)$.
Assume for $m\in(1,+\infty)$,  the sequence $\bigl(\vec{u}_{{\rm L},n}(0)\big)_{n}$ is bounded in $\LLL^{m}$ and that for all sequences $(\lambda_{n})_{n}\in (0,\infty)^{\mathbb N}$
and $(t_{n})_{n}\in\mathbb R^{\mathbb N}$,
\begin{equation}
\label{asmp}
\Biggl(\frac{1}{\lambda_{n}^{1/m}}u_{{\rm L},n}\Bigl(\frac{-t_{n}}{\lambda_{n}},\frac{\cdot}{\lambda_{n}}\Bigr),
\frac{1}{\lambda_{n}^{1+1/m}}\partial_{t}u_{{\rm L},n}\Bigl(\frac{-t_{n}}{\lambda_{n}},\frac{\cdot}{\lambda_{n}}\Bigr)\Biggr)_{n}\,,
\end{equation}
converges weakly to $0$ in $\LLL^{m}$ as $n\rightarrow+\infty$.
Then 
\begin{equation}
\label{str-norm-cvg}
\lim_{n\rightarrow+\infty}\|u_{{\rm L},n}\|_{S(\RR)}=0\,,
\end{equation}
\end{proposition}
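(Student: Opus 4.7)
The plan is to prove Proposition \ref{P:wLn} by a concentration--compactness argument of Bahouri--G\'erard type, suitably adapted to the non-Hilbertian space $\LLL^m$. The first ingredient is a \emph{refined Strichartz inequality} of the form
$$\|u_{\rm L}\|_{S(\R)} \lesssim \|\vec{u}_{\rm L}(0)\|_{\LLL^m}^{\theta}\,\NNN(u_{\rm L})^{1-\theta}$$
for some $\theta\in(0,1)$, where $\NNN$ is a weak, scale-invariant ``concentration'' quantity. A natural candidate, suggested by the pointwise Morrey-type bound $|v_0(r)|\lesssim r^{-1/m}\|v_0\|_{\dotW}$ that follows from Proposition \ref{P:prelim}\eqref{P4}, is
$$\NNN(u_{\rm L}) := \sup_{(\tau,\rho)\in\R\times(0,\infty)} \rho^{1/m}\,|u_{\rm L}(\tau,\rho)|.$$
To establish this refined inequality I would interpolate between the Strichartz bound \eqref{27} of Proposition \ref{prop:key-estimate} and the auxiliary estimates \eqref{subkey-estimate}/\eqref{subkey-estimate-1} of Propositions \ref{prop:subkey-estimate}--\ref{prop:subkey-estimate-1}; the intermediate role of the exponents $(a,b)$ with $m<a<2m+1<b/m<\infty$ (cf.\ the remark after \eqref{subkey-estimate-1}) is precisely what one needs in order to insert, via a Chamorro-type dyadic decomposition of the one-dimensional profile $\partial_r(rv_0)$, a weak Besov-type factor controlled by $\NNN$.

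Once the refined inequality is in place, the uniform $\LLL^m$ bound on $(\vec{u}_{{\rm L},n}(0))_n$ reduces the proof of \eqref{str-norm-cvg} to showing $\NNN(u_{{\rm L},n})\rta 0$. I would argue by contradiction: assuming that along a subsequence $\NNN(u_{{\rm L},n})\geq c_0>0$, I select near-maximizers $(\tau_n,\rho_n)$ with $\rho_n^{1/m}|u_{{\rm L},n}(\tau_n,\rho_n)|\geq c_0/2$. Setting $\lambda_n:=\rho_n$ and $t_n:=-\tau_n$, the rescalings
$$U_n(\tau,y) := \frac{1}{\lambda_n^{1/m}}\, u_{{\rm L},n}\!\left(\frac{\tau-t_n}{\lambda_n},\,\frac{y}{\lambda_n}\right)$$
are again solutions of \eqref{LW}, uniformly bounded in $\LLL^m$, and satisfy $\vec{U}_n(0)\rightharpoonup 0$ in $\LLL^m$ by the hypothesis of the proposition applied to the sequences $(\lambda_n)$ and $(t_n)$. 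Proposition \ref{P:prelim}\eqref{P4} ensures that $(r U_n(0,r))_n$ is uniformly H\"older continuous of exponent $1-1/m$ on compact subsets of $(0,+\infty)$; Arzel\`a--Ascoli combined with the weak convergence to $0$ in $\LLL^m$ then forces $U_n(0,\cdot)\rta 0$ pointwise. A parallel argument applied to the d'Alembertian representation \eqref{2} and to the generating functions $F_n$ attached to $U_n$ gives $U_n(0,1)\rta 0$, and since $U_n(0,1)=\rho_n^{1/m}u_{{\rm L},n}(\tau_n,\rho_n)$, this contradicts the lower bound $\geq c_0/2$.

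The hard part is the refined Strichartz inequality itself: in the Hilbertian case $m=2$, its proof rests on the Plancherel identity and on the $\ell^2$-orthogonality of Littlewood--Paley blocks, both of which are unavailable for $m\neq 2$. My plan is to bypass these obstacles via the one-dimensional reduction intrinsic to the radial setting, using the maximal-function machinery already exploited in the proof of Proposition \ref{prop:linear-symmetric-estimate} together with Chamorro's non-Hilbertian refinement \cite{Chamorro11} applied at the level of the variable $\partial_r(rv_0)\in L^m(dr)$. A secondary subtlety is that weak convergence in $\LLL^m$ does not by itself give pointwise convergence at a single scale-normalized point; the 1D uniform H\"older control described above closes this gap, but should the extraction step require more precision, one can replace $\NNN$ by a genuinely dyadic Besov-type variant at the cost of a slightly more technical compactness argument.
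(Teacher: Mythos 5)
Your strategy is viable and genuinely different from the paper's, and in fact, once its two incomplete steps are filled in correctly, it is \emph{simpler} than the published argument. The paper also argues by contradiction from a lower bound on $\|u_{{\rm L},n}\|_{S(\R)}$ and also passes through the $L^\infty_t L^{m(m+1)}_x(r^{m}dr)$ norm via H\"older against \eqref{subkey-estimate}/\eqref{subkey-estimate-1}; but at that point it invokes Chamorro's weighted refined Sobolev inequality (Lemma \ref{lem:refined-Soblev-w}) with the Besov norm $\dot B^{-1/m}_{\infty,\infty}(\R^{[m]+1})$, which forces a lift to $\R^{[m]+1}$, the $A_p$-weight machinery, and then a two-case analysis of the spatial translation parameters $y_n$ (the sup over $x$ in the Besov norm) in the extraction step. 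Your Morrey-type functional $\NNN(u)=\sup_{\tau,\rho}\rho^{1/m}|u(\tau,\rho)|$ eliminates the translation parameter entirely, so the extraction reduces to pure rescaling/time-translation, which is exactly what hypothesis \eqref{asmp} controls.

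Two corrections to your plan. First, you have misidentified the hard part: the refined inequality with $\NNN$ needs no Littlewood--Paley decomposition, no Plancherel, and no Chamorro-type machinery. Pointwise, $|u|^{m(m+1)}r^m=|u|^{m^2}|u|^m r^m\leq \bigl(\sup_\rho \rho^{1/m}|u(\rho)|\bigr)^{m^2}|u|^m$, and the one-dimensional Hardy inequality applied to $f=ru_0$ (with $f(0)=0$ by Proposition \ref{prop:fs}) gives $\int_0^\infty|u|^m\,dr\lesssim\int_0^\infty|\partial_r(ru)|^m dr\approx\|u\|_{\dotW}^m$; hence $\|u(t)\|_{L^{m(m+1)}(r^mdr)}\lesssim \NNN(u)^{\frac{m}{m+1}}\|u(t)\|_{\dotW}^{\frac{1}{m+1}}$, and combining with energy conservation \eqref{11} and the H\"older step in time/space (valid for all $m>1$ since $m<a<2m+1<b/m$) yields your refined Strichartz inequality directly. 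Second, the Arzel\`a--Ascoli detour and the ``parallel argument via $F_n$'' are superfluous: by Proposition \ref{P:prelim}\eqref{P4}, $|u_0(\rho_0)|\leq C\rho_0^{-1/m}\|u_0\|_{\dotW}$, so point evaluation at a fixed $\rho_0>0$ \emph{is} a bounded linear functional on $\dotW$, and the weak convergence of the first component of \eqref{asmp} alone forces $U_n(0,1)\to0$ --- your stated worry that weak convergence does not see a single point is unfounded in this space. (Also fix the bookkeeping: to have $U_n(0,1)=\rho_n^{1/m}u_{{\rm L},n}(\tau_n,\rho_n)$ you need $\lambda_n=1/\rho_n$ and $t_n=-\tau_n/\rho_n$, not $\lambda_n=\rho_n$, $t_n=-\tau_n$; this is harmless since \eqref{asmp} quantifies over all sequences.) With these repairs the argument closes completely.
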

\begin{proposition}
 \label{P:Pythagorean}
 Let $J\geq 1$ and $(U^j_{\rm L})_{j=1\ldots J}$ be solutions of the linear wave equations with initial data in $\LLL^m$. For all $j=1,\ldots, J$, we let $(\lambda_{j,n})_n\in (0,\infty)^{\NN}$ and $(t_{j,n})_n\in \RR^{\NN}$ be sequences of parameters that satisfy the pseudo-orthogonality property \eqref{P-O}. Let $(u_{{\rm L},n})$ be a sequence of solutions of the linear wave equation with initial data in $\LLL^m$. Let $w_n^J$ be defined by \eqref{wnJ}, \eqref{ULnJ}
 and assume that for all $j\in \{1,\ldots, J\}$,
 \begin{equation}
  \label{H:Pythagorean}
 \left( \lambda_{j,n}^{\frac 1m}w_{n}^J\left(t_{j,n},\lambda_{j,n}\cdot \right),\lambda_{j,n}^{\frac 1m+1}\partial_t w_{n}^J\left(t_{j,n},\lambda_{j,n}\cdot \right)\right) \xrightharpoonup[n\to\infty]{}0\text{ weakly in }\LLL^m. 
 \end{equation} 
 Then the Bessel-type inequality \eqref{Pythagorean} holds.
\end{proposition}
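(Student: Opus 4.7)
The guiding idea is that the modified energy $E_m$ is, via the representation \eqref{2}--\eqref{def_pm}, an $L^m$-norm on the real line in disguise. More precisely, writing $\dot{F}_v$ for the derivative of the function $F$ defined in \eqref{3} from the data $\vec{v}(0)$, one has $[v]_{\pm}(t,r)=2\dot{F}_v(t\pm r)$, so a change of variables gives $E_m(\vec{v}(t))=2^m\|\dot{F}_v\|_{L^m(\mathbb{R})}^m$. The map $v\mapsto \dot{F}_v$ is a linear isomorphism from $\LLL^m$ onto $L^m(\mathbb{R})$, conjugating the symmetry $v(t,x)\mapsto \lambda^{-1/m}v((t-t_0)/\lambda,x/\lambda)$ into the $L^m$-isometry $\sigma_{\lambda,t_0}^m:f\mapsto \lambda^{-1/m}f((\cdot-t_0)/\lambda)$. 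Setting $h_n:=\dot{F}_{u_{{\rm L},n}}$, $f_j:=\dot{F}_{U^j_{\rm L}}$, $f_{j,n}:=\sigma_{\lambda_{j,n},t_{j,n}}^m f_j$ and $g_n:=\dot{F}_{w_n^J}$, the decomposition of $u_{{\rm L},n}$ becomes $h_n=\sum_{j=1}^J f_{j,n}+g_n$ in $L^m(\mathbb{R})$, with $\|f_{j,n}\|_{L^m}=\|f_j\|_{L^m}$; hypothesis \eqref{H:Pythagorean} reads $(\sigma_{\lambda_{j,n},t_{j,n}}^m)^{-1}g_n\rightharpoonup 0$ weakly in $L^m(\mathbb{R})$ for every $j$, and \eqref{Pythagorean} is equivalent to the $L^m$-Bessel inequality
\[
\liminf_{n\to\infty}\|h_n\|_{L^m(\mathbb{R})}^m\,\geq\,\sum_{j=1}^J\|f_j\|_{L^m(\mathbb{R})}^m.
\]

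To prove this I use duality. Given $\varepsilon>0$, pick (by density) $\phi_j\in C_c^\infty(\mathbb{R})$ with $\|\phi_j\|_{L^{m'}}\leq 1$ and $\langle f_j,\phi_j\rangle\geq \|f_j\|_{L^m}-\varepsilon$, and define the $L^{m'}$-isometric rescalings $\phi_{j,n}:=\sigma_{\lambda_{j,n},t_{j,n}}^{m'}\phi_j$. A change of variable gives $\langle h_n,\phi_{j,n}\rangle=\langle(\sigma_{\lambda_{j,n},t_{j,n}}^m)^{-1}h_n,\phi_j\rangle$. The point is that $(\sigma_{\lambda_{j,n},t_{j,n}}^m)^{-1}h_n\rightharpoonup f_j$ weakly in $L^m(\mathbb{R})$: the diagonal term contributes exactly $f_j$; for $k\neq j$, the cross term $(\sigma_{\lambda_{j,n},t_{j,n}}^m)^{-1}f_{k,n}$ is a rescaling-translation of $f_k$ whose parameters $(\lambda_{k,n}/\lambda_{j,n},(t_{k,n}-t_{j,n})/\lambda_{j,n})$ diverge by \eqref{P-O}, which, after approximating $f_k$ by compactly supported functions, forces pointwise a.e.\ convergence to $0$ and hence weak convergence to $0$ in $L^m$ by boundedness; the remaining contribution from $g_n$ vanishes by hypothesis. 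Consequently $\lim_n\langle h_n,\phi_{j,n}\rangle=\langle f_j,\phi_j\rangle\geq \|f_j\|_{L^m}-\varepsilon$.

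For arbitrary coefficients $c_1,\dots,c_J\in\mathbb{R}$, set $\phi_n:=\sum_j c_j\phi_{j,n}$; the previous step gives $\lim_n\langle h_n,\phi_n\rangle=\sum_j c_j\langle f_j,\phi_j\rangle$. To control the denominator I establish the asymptotic $L^{m'}$-Pythagorean identity
\[
\lim_{n\to\infty}\|\phi_n\|_{L^{m'}(\mathbb{R})}^{m'}=\sum_{j=1}^J|c_j|^{m'}\|\phi_j\|_{L^{m'}(\mathbb{R})}^{m'},
\]
by iterating the Brezis--Lieb lemma: applying the $L^{m'}$-isometry $(\sigma_{\lambda_{1,n},t_{1,n}}^{m'})^{-1}$ sends $\phi_n$ to $c_1\phi_1+R_n^1$, where $R_n^1\to 0$ pointwise a.e.\ (compact support of the $\phi_j$'s combined with the dichotomy in \eqref{P-O}), so Brezis--Lieb peels off the $|c_1|^{m'}\|\phi_1\|_{L^{m'}}^{m'}$ contribution, and one iterates over $j=2,\dots,J$. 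Combining everything with the duality inequality $\|h_n\|_{L^m}\|\phi_n\|_{L^{m'}}\geq \langle h_n,\phi_n\rangle$ and passing to $\liminf$ yields
\[
\liminf_{n\to\infty}\|h_n\|_{L^m}\,\geq\,\frac{\sum_{j=1}^J c_j(\|f_j\|_{L^m}-\varepsilon)}{\left(\sum_{j=1}^J|c_j|^{m'}\right)^{1/m'}},
\]
and the H\"older equality case $c_j=(\|f_j\|_{L^m}-\varepsilon)^{m-1}$ followed by $\varepsilon\to 0$ produces the desired $\liminf_n\|h_n\|_{L^m}^m\geq\sum_j\|f_j\|_{L^m}^m$. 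The main technical obstacle is the iterated Brezis--Lieb step for the test functions, where the compactness of their supports must be used together with the three-case pseudo-orthogonality dichotomy from \eqref{P-O} to extract the pointwise a.e.\ convergences; this is exactly the place where the non-Hilbertian nature of $\LLL^m$ prevents a sharp Pythagorean equality for the $h_n$ themselves and forces only the weaker Bessel-type inequality \eqref{Pythagorean}.
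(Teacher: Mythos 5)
Your proof is correct and is essentially the paper's argument: both establish the Bessel inequality by duality, pairing $u_{{\rm L},n}$ against rescaled near-extremizers attached to each profile and using the pseudo-orthogonality \eqref{P-O} to kill the cross terms and to decouple the $L^{m'}$-norm of the sum of test functions (your iterated Brezis--Lieb step is the paper's ``Step 1'' decoupling applied to the duals). The only cosmetic differences are that you make the reduction to $L^m(\mathbb{R})$ via $v\mapsto\dot F_v$ explicit and optimize over coefficients $c_j$ at the end, whereas the paper works directly with $[\,\cdot\,]_{\pm}$ and uses the exact dual elements $|[U^j_{{\rm L},n}]_{\pm}|^{m-2}[U^j_{{\rm L},n}]_{\pm}$, which builds the optimal coefficients in from the start.
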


\begin{proof}[Proof of the theorem]
The proof of Theorem \ref{T:profiles} assuming Proposition \ref{P:wLn} and \ref{P:Pythagorean} is quite standard, at least in the Hilbertian setting. We give it for the sake of completeness. We mainly need to check that it is harmless
that we have only a Bessel-type inequality \eqref{Pythagorean}  in the $\LLL^m$ setting, which is not Hilbertian, instead of a more precise Pythagorean expansion.\medskip

We construct the profiles $U^j_{\rm L}$ and the parameters $\lambda_{j,n}$, $t_{j,n}$ by induction.

Let $J\geq 1$ and assume that for $1\leq j\leq J-1$, we have constructed profiles $U^j_{\rm L}$ such that \eqref{P-O} and \eqref{WeakCV} holds after extraction of a subsequence in $n$ (if $J=1$ we do not assume anything and set $w_n^0=u_{{\rm L},n}$). Note that it implies \eqref{Pythagorean} by Proposition \ref{P:Pythagorean}.  Let $\AAA_J$ be the set of $(U_0,U_1)\in \LLL^m$ such that there exist sequences $(\lambda_n)_n$, $(t_n)_n$ of parameters such that, after extraction of a subsequence:
$$ \left( \lambda_n^{\frac 1m}w_{n}^{J-1}(t_n,\lambda_n\cdot), \lambda_n^{\frac{1}{m}+1}\partial_tw_{n}^{J-1}(t_n,\lambda_n\cdot) \right)\xrightharpoonup[n\to\infty]{}(U_0,U_1),$$
weakly in $\LLL^m$, where $w_n^{J-1}$ is defined by \eqref{wnJ}.
We distinguish two cases.

\EMPH{Case 1} $\AAA_J=\Big\{(0,0)\Big\}$. In this case we stop the process and let $U_{\rm L}^j=0$ for all $j\geq J$. 

\EMPH{Case 2} There exists a nonzero element in $\AAA_J$. In this case, we choose $(U_0^J,U_1^J)\in \AAA_J$ such that 
\begin{equation}
 \label{choice_UJ}
 E_m\left( U_0^J,U_1^J \right)\geq \frac 12 \sup_{(U_0,U_1)\in \AAA_J} E_m(U_0,U_1),
\end{equation} 
and we choose sequences $(\lambda_{J,n})_n$ and $(t_{J,n})_n$ such that, (after extraction of subsequences in $n$),
\begin{equation}
 \label{def_param}
  \left( \lambda_{J,n}^{\frac{1}{m}} w_{n}^{J-1}(t_{J,n},\lambda_{J,n}\cdot), \lambda_{J,n}^{\frac{1}{m}+1} \partial_t w_{n}^{J-1}(t_{J,n},\lambda_{J,n}\cdot)\right)\xrightharpoonup[n\to\infty]{}\left( U_0^J,U_1^J \right)
\end{equation}
weakly in $\LLL^m$.
Note that  \eqref{WeakCV} holds for $j=J$ thanks to \eqref{def_param}. Furthermore \eqref{P-O} for $j\in \{1,\ldots J-1\}$, $k=J$ follows from \eqref{WeakCV} (for $j\in \{1,\ldots, J-1\}$), \eqref{def_param} and the fact that $(U_0^J,U_1^J)\neq (0,0)$. Finally, as already observed, \eqref{Pythagorean} is a consequence of \eqref{P-O}, \eqref{WeakCV} and Proposition \ref{P:Pythagorean}.\medskip

If there exists a $J\geq 1$ such that Case 1 above holds, then we are done: indeed, in this case, $w_n^J$ does not depend on $J$ for large $n$, and \eqref{dispersive} is an immediate consequence of the definition of $\AAA_J$ and Proposition \ref{P:wLn}. 

Next assume that Case 2 holds for all $J\geq 1$. Using a diagonal extraction argument, we obtain, for all $j\geq 1$,  profiles $U^j_{\rm L}$, and sequences of parameters $(\lambda^j_n)_n$ and $(t_n^j)_n$ such that \eqref{P-O}, \eqref{WeakCV} and \eqref{Pythagorean} hold for all $j,k,J$. It remains to prove \eqref{dispersive}. In view of Proposition \ref{P:wLn}, it is sufficient to prove:
\begin{equation*}
 \lim_{J\to\infty} \sup_{(A_0,A_1)\in \AAA_J} \left\|(A_0,A_1)\right\|_{\LLL^m}=0.
\end{equation*} 
This follows from \eqref{choice_UJ}, the equivalence between $E_m^{1/m}$ and the $\LLL^m$ norm, and the fact that, by \eqref{Pythagorean}, 
$$\lim_{J\to\infty} E_m\left( U_0^J,U_1^J \right)=0.$$
The proof is complete.
\end{proof}
\subsection{Convergence to $0$ of the Strichartz norm}
\label{SS:0Strichartz}
First of all, let us introduce the notation $\dot B^{s}_{\infty,\infty}(\mathbb R^{d})$ for the homogeneous Besov space on $\mathbb R^{d}$, which is defined as follows.
Let $\psi\in C^{\infty}_{0}(\mathbb R^{d})$ be a radial function, supported in $\{\xi\in\mathbb R^{d}: 1/2\leq |\xi|\leq 2\}$ and such that 
\[\sum_{j\in \mathbb Z}\psi(2^{-j}\xi)=1\,,\;\xi\in\mathbb R^{3}\setminus\{0\}\,.\]
We denote by $\dot \Delta_{j}$ the Littlewood-Paley projector 
\[\dot \Delta_{j}f(x)=\Bigl(\psi(2^{-j}\cdot)\widehat{f}(\cdot)\Bigr)^{\vee}(x)\;,\;j\in\mathbb Z\,,\]
where
\[\widehat{f}(\xi)=\int_{\mathbb R^{d}}f(x)e^{-ix\cdot\xi}dx\,,\]
is the Fourier transform on $\mathbb R^{d}$
and we use
\[g^{\vee}(x)=\frac{1}{(2\pi)^{d}}\int_{\mathbb R^{d}}
g(\xi)e^{ix\cdot\xi}d\xi\,,\]
to denote the inverse Fourier transform.
For a tempered distribution $f$ on $\mathbb{R}^d$, we set 
\[\|f\|_{\dot B^{s}_{\infty,\infty}(\mathbb R^{d})}:=
\sup_{j\in\mathbb Z}2^{js}\bigl\|\dot\Delta_{j}f\bigr\|_{L^{\infty}(\mathbb R^{d})}\;.\]
If $\|f\|_{\dot B^{s}_{\infty,\infty}}<+\infty$, we say  $f$
belongs to $ \dot B^{s}_{\infty,\infty}$.\medskip

We have the following refined Sobolev inequality in weighted norms.
\begin{lemma}
\label{lem:refined-Soblev-w}
Let $\omega(x)\in A_{p}$ with $1<p<+\infty$, {\emph i.e.}
\begin{equation}
\label{eq:Ap}
\sup_{B}\Bigl(\frac{1}{|B|}\int_{B}\omega(x)\,dx\Bigr)
\Bigl(\frac{1}{|B|}\int_{B}\omega(x)^{-\frac{1}{p-1}}\,dx\Bigr)^{p-1}<+\infty\,
\end{equation}
where the supremum is taken over all balls  $B$ in $\mathbb{R}^{d}$.
If $\nabla f\in L^{p}(\mathbb R^{d},\omega(x)dx)$ and $f\in \dot B^{-\beta}_{\infty,\infty}(\mathbb R^{d})$,
then
\begin{equation}
\label{eq:refined-Sobolev-w}
\|f\|_{L^{q}(\mathbb R^{d},\omega)}\leq C
\|\nabla f\|^{\theta}_{L^{p}(\mathbb R^{d},\omega)}
\|f\|_{\dot B^{-\beta}_{\infty,\infty}(\mathbb R^{d})}^{1-\theta}\,,
\end{equation}
where $1<p<q<+\infty$, $\theta=p/q$, $\beta=\theta/(1-\theta)$.
\end{lemma}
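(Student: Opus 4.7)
The plan is to establish the inequality via a pointwise estimate on $f(x)$ obtained through Littlewood--Paley decomposition, followed by an application of the weighted maximal function inequality that characterizes Muckenhoupt weights.

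First I would decompose $f=\sum_{j\in\mathbb{Z}}\dot{\Delta}_{j}f$ and, for a cutoff $N\in\mathbb{Z}$ to be optimized pointwise, split
\[
|f(x)|\leq \sum_{j\leq N}|\dot{\Delta}_{j}f(x)|+\sum_{j>N}|\dot{\Delta}_{j}f(x)|.
\]
For the low-frequency piece, the Besov norm gives directly $\|\dot{\Delta}_{j}f\|_{L^{\infty}}\leq 2^{j\beta}\|f\|_{\dot{B}^{-\beta}_{\infty,\infty}}$, and summing the geometric series (legitimate since $\beta>0$) yields $\sum_{j\leq N}|\dot{\Delta}_{j}f(x)|\lesssim 2^{N\beta}\|f\|_{\dot{B}^{-\beta}_{\infty,\infty}}$. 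For the high-frequency piece I would use the standard fact that, since the spectrum of $\dot{\Delta}_{j}f$ lies in an annulus of size $2^{j}$, one can factor out a derivative: writing $\dot{\Delta}_{j}f=2^{-j}\sum_{k}h_{j,k}\ast\partial_{k}f$ with convolution kernels $h_{j,k}$ whose $L^{1}$ norm is uniformly bounded, one gets $|\dot{\Delta}_{j}f(x)|\lesssim 2^{-j}\mathcal{M}(\nabla f)(x)$, where $\mathcal{M}$ denotes the Hardy--Littlewood maximal function. Summing in $j>N$ yields $\sum_{j>N}|\dot{\Delta}_{j}f(x)|\lesssim 2^{-N}\mathcal{M}(\nabla f)(x)$.

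Combining both contributions and optimizing by choosing $N$ so that $2^{N(1+\beta)}\approx \mathcal{M}(\nabla f)(x)/\|f\|_{\dot{B}^{-\beta}_{\infty,\infty}}$ gives the key pointwise bound
\[
|f(x)|\lesssim \bigl(\mathcal{M}(\nabla f)(x)\bigr)^{\frac{\beta}{1+\beta}}\|f\|_{\dot{B}^{-\beta}_{\infty,\infty}}^{\frac{1}{1+\beta}}=\bigl(\mathcal{M}(\nabla f)(x)\bigr)^{\theta}\|f\|_{\dot{B}^{-\beta}_{\infty,\infty}}^{1-\theta},
\]
where the final equality uses the relations $\beta=\theta/(1-\theta)$, so that $\beta/(1+\beta)=\theta$ and $1/(1+\beta)=1-\theta$.

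It remains to raise to the power $q$, integrate against $\omega(x)\,dx$, and use $\theta q=p$ to convert the exponent on the maximal function:
\[
\int_{\mathbb{R}^{d}}|f(x)|^{q}\omega(x)\,dx\lesssim \|f\|_{\dot{B}^{-\beta}_{\infty,\infty}}^{(1-\theta)q}\int_{\mathbb{R}^{d}}\bigl(\mathcal{M}(\nabla f)(x)\bigr)^{p}\omega(x)\,dx.
\]
At this last step I would invoke the Muckenhoupt theorem: the hypothesis $\omega\in A_{p}$ is exactly the condition under which $\mathcal{M}$ is bounded on $L^{p}(\mathbb{R}^{d},\omega)$, so the right-hand side is controlled by $\|f\|_{\dot{B}^{-\beta}_{\infty,\infty}}^{(1-\theta)q}\|\nabla f\|_{L^{p}(\mathbb{R}^{d},\omega)}^{p}$. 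Taking $q$-th roots yields \eqref{eq:refined-Sobolev-w}. The main (and really only nontrivial) points are the pointwise derivative-smoothing bound $|\dot{\Delta}_{j}f|\lesssim 2^{-j}\mathcal{M}(\nabla f)$ on high frequencies, and the need for the $A_{p}$ hypothesis to preserve the $L^{p}$ bound for $\mathcal{M}$ in the weighted setting; both are classical and the rest is bookkeeping of the exponents $\theta,\beta,p,q$.
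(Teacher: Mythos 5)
Your argument is correct, and it is genuinely different from what the paper does: the paper does not prove this lemma at all, but simply quotes it from Chamorro's work on refined Sobolev inequalities in stratified Lie groups (specializing to $\mathbb{R}^{d}$ and noting that the unweighted case $\omega\equiv 1$ recovers G\'erard--Meyer--Oru). What you have written is essentially the classical G\'erard--Meyer--Oru proof adapted to the weighted setting, and all the exponent bookkeeping checks out: the low-frequency sum costs $2^{N\beta}\|f\|_{\dot B^{-\beta}_{\infty,\infty}}$, the high-frequency sum costs $2^{-N}\mathcal{M}(\nabla f)(x)$, the optimization in $N$ gives the pointwise interpolation bound with exponents $\beta/(1+\beta)=\theta$ and $1/(1+\beta)=1-\theta$, and $\theta q=p$ turns the $L^{q}(\omega)$ integral of $\mathcal{M}(\nabla f)^{\theta}$ into the $L^{p}(\omega)$ norm of $\mathcal{M}(\nabla f)$, which Muckenhoupt's theorem controls precisely under $\omega\in A_{p}$. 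What your approach buys is a short, self-contained Euclidean proof that makes transparent why the $A_{p}$ hypothesis is exactly what is needed; what the citation buys the authors is generality (stratified Lie groups) without reproducing the argument. Two minor points worth stating explicitly if you write this up: the identity $f=\sum_{j}\dot\Delta_{j}f$ for a tempered distribution holds only modulo polynomials, and the hypotheses must be used to dispose of that ambiguity (standard, since $f\in\dot B^{-\beta}_{\infty,\infty}$ with $\beta>0$ and $\nabla f\in L^{p}(\omega)$); and the pointwise bound $|h_{j,k}\ast\partial_{k}f(x)|\lesssim \mathcal{M}(\nabla f)(x)$ requires not merely a uniform $L^{1}$ bound on $h_{j,k}$ but domination by a radially decreasing integrable majorant, which holds here because the $h_{j,k}$ are $L^{1}$-normalized rescalings of a fixed Schwartz function.
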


The refined Sobolev inequality \eqref{eq:refined-Sobolev-w} 
in weighted norms
was proved in \cite{Chamorro11},
where the author considered more general situations with the underlying 
domain $\mathbb R^{d}$
replaced by stratified Lie groups. 
The above lemma follows immediately since 
Euclidean spaces $\mathbb R^{d}$ with its natural group structure
is an example of a stratified Lie group.
Notice that $1\in A_{p}$, and one recovers the classical result 
on the refined Sobolev inequalities 
established first in  \cite{MeyerGerardOru97}.
\medskip
%

With Lemma \ref{lem:refined-Soblev-w} at hand, we are ready 
to prove the Proposition \ref{P:wLn}.

\begin{proof}
Since $\big((u_{0,n},u_{1,n})\big)_n$ is bounded in $\LLL^m$, there exists $A>0$ such that 
\[\int_0^{+\infty}|r\partial_{r} u_{0,n}(r)|^{m}dr
+\int_{0}^{+\infty}|ru_{1,n}(r)|^{m}dr\leq A<+\infty\,,\]
for all $n$.

Assuming \eqref{str-norm-cvg} fails, 
we have for some constant $c_{0}$ having the property that $0<c_{0}\leq C\,A^{1/m}$ such that
\begin{equation}
\label{limsup}
\limsup_{n\rightarrow\infty}\|u_{{\rm L},n}\|_{L^{2m+1}_{t}\left(\mathbb{R},\,L^{m(2m+1)}_{x}(\mathbb{R}^3,r^{m-2})\right)}=c_{0}\,,
\end{equation} 
where $C$ is the constant in \eqref{27}, \eqref{subkey-estimate} and  \eqref{subkey-estimate-1}.
From \eqref{subkey-estimate},  \eqref{subkey-estimate-1}  and H\"older's inequality, 
we know that up to a subsequence, there exists some $\theta\in(0 ,\,1)$ such that
\begin{equation}
\label{end-point-power-bd}
\lim_{n\rightarrow \infty}
\|u_{{\rm L},n}\|_{L^{\infty}_{t}\left(\mathbb{R},\,L^{m(m+1)}_{x}(\mathbb{ R}^3,r^{m-2})\right)}
\geq \biggl(\frac{c_{0}}{(CA^{1/m})^{\theta}}\biggr)^{\frac{1}{1-\theta}}\,.
\end{equation}
For $m>1$, we denote by $[m]$ the greatest integer lesser or equal to $m$
and by $\{ m\}:=m-[m]$ the fractional part of $m$. Notice that $\{m\}\in[0,1)$
and $\{m\}=0$ if and only if $m\in\mathbb N$.\medskip

Let $d=[m]+1$ and $\omega(x)=|x|^{\gamma}$ with $\gamma=\{m\}$, $x\in\mathbb{R}^d$.
It is easy to see that $\omega\in A_{m}$ (see for example \cite{Grafakos14B}) and we have the following 
refined Sobolev inequality in view of Lemma \ref{lem:refined-Soblev-w}
%
\begin{equation}
\label{refined-sobolev}
\|f\|_{L^{m(m+1)}(\mathbb R^{d},|x|^{\gamma})}
\leq C_{0}
\|\nabla f\|^{\frac{1}{m+1}}_{L^{m}(\mathbb R^{d},|x|^{\gamma})}
\| f\|^{\frac{m}{m+1}}_{\dot B^{-\frac{1}{m}}_{\infty,\infty}(\mathbb R^{d})}\,.
\end{equation}
If we apply \eqref{refined-sobolev}
to functions $u_{{\rm L},n}(t,|x|)$ with respect to the spatial 
variable $x\in \mathbb R^{[m]+1}$, we obtain
by transferring the formula into polar coordinates
\begin{equation}
\label{extension-to-higher-dim}
\begin{split}
\int_{0}^{+\infty} \bigl|u_{{\rm L},n}(t,r)\bigr|^{m(m+1)}r^{m}dr
&\leq C_{0}^{m(m+1)}\int_{0}^{+\infty} \bigl|r\partial_{r}u_{{\rm L},n}(t,r)\bigr|^{m}dr\\
\times\sup_{j\in\mathbb Z}\sup_{x\in\mathbb R^{[m]+1}}\Bigl(2^{-j/m}&
\int_{\mathbb R^{[m]+1}} \psi^{\vee}(y)\,u_{{\rm L},n}(t,|x-2^{-j}y|)\,dy\Bigr)^{m^{2}}\,.
\end{split}
\end{equation}
In view of the conservation of the $\LLL^m$-energy, and the fact that the norms $\|\cdot\|_{\LLL^m}$ and $(E_m)^{\frac 1m}$ are equivalent, there exists some $N>0$ such that if $n\geq N$
\begin{equation}
\label{Lower-bound-besov}
\sup_{t\in\mathbb R}\;\sup_{j\in\mathbb Z}\,\sup_{x\in\mathbb R^{[m]+1}}\;
\Bigl|\int_{\mathbb R^{[m]+1}}
2^{-j/m}\,u_{{\rm L},n}\bigl(t,|x-2^{-j}y|\bigr)\,
\psi^{\vee}(y)\,dy\Bigr|
\geq \delta_{0}\,,
\end{equation}
where 
\[\delta_{0}=\frac12c_{0}^{\frac{m+1}{(1-\theta)m}}
\Bigl(C^{\frac{\theta}{1-\theta}}C_{0}\Bigr)^{-\frac{m+1}{m}}
C_{m}^{-1/m}A^{-\frac{m+1}{m^{2}}
\bigl(\frac{\theta}{1-\theta}+\frac{1}{m+1}\bigr)}>0\,,\]
and $C_m$ is the constant in \eqref{11}.

As a result of \eqref{Lower-bound-besov}, we have a family of $(t^{0}_{n})_{n}$ in 
$\mathbb R^{\mathbb N}$, a sequence of $(j_{n})_{n}\in\mathbb Z^{\mathbb N}$ and $(x_{n})_{n}$ 
in $(\mathbb R^{[m]+1})^{\mathbb N}$such that 
\[
\Bigl|\int_{\mathbb R^{[m]+1}}
2^{-j_{n}/m}\,
u_{{\rm L},n}\bigl(t^{0}_{n},|x_{n}-2^{-j_{n}}y|\bigr)\,
\psi^{\vee}(y)\,dy\Bigr|
\geq \delta_{0}/2\,,\;n\geq N\,.\]
Denote by $\varphi(\cdot)=\psi^{\vee}(\cdot)$,
$\lambda_{n}=2^{j_{n}}$, 
$t_{n}=-t^{0}_{n}\lambda_{n}$,  and $y_{n}=\lambda_{n} x_{n}$,
we will obtain a contradiction by letting $n\rightarrow \infty$ provided, up to some subsequences,
\begin{equation}
\label{reduced}
\int_{\mathbb R^{[m]+1}}
\frac{1}{\lambda_{n}^{1/m}}u_{{\rm L},n}\Bigl(\frac{-t_{n}}{\lambda_{n}},\frac{|y-y_{n}|}{\lambda_{n}}\Bigr)\varphi(y)\,dy\longrightarrow0\,,\;n\rightarrow+\infty\,.
\end{equation}
To prove this, we
 divide the argument
into two cases.\medskip

$\bullet$ {\bf Case 1.}
$\displaystyle\limsup_{n\to\infty}|y_{n}|= +\infty$.
Up to a subsequence, we may assume 
\begin{equation}
\label{LM}
0<|y_{1}|\ll |y_{2}| \ll \cdots \ll | y_{n}| \ll |y_{n+1}| \cdots\longrightarrow+\infty\,,\,n\rightarrow +\infty.
\end{equation}
Denote by
\[V_{n}(y)=\frac{1}{\lambda_{n}^{1/m}}u_{{\rm L},n}\Bigl(\frac{-t_{n}}{\lambda_{n}},\frac{|y|}{\lambda_{n}}\Bigr)\,.\]
Note that $V_n$ is a radial function on $\RR^{[m]+1}$.
Then from the radial Sobolev embedding (see \eqref{P4} in Proposition \ref{P:prelim}), we have 
\begin{equation}
\label{radial-sobolev}
|V_{n}(y)|\leq 
\frac{1}{|y|^{1/m}}\Bigl(\int_{0}^{+\infty}|r\partial_{r}u_{{\rm L},n}(-t_{n}/\lambda_{n},r)|^{m}dr\Bigr)^{\frac1m}
\leq C_{m}\Bigl(\frac{A}{|y|}\Bigr)^{1/m}\,,
\end{equation}
for all $n$. As a consequence, \eqref{reduced} is bounded by
\begin{equation}
\label{LL}
c_{n}:=\int_{\mathbb R^{[m]+1}}
|y-y_{n}|^{-1/m}|\varphi(y)|\,dy
\,,
\end{equation}
and it suffices to show 
\begin{equation}
\label{eq:KK}
\lim_{n\rightarrow+\infty}c_{n}=0\,.
\end{equation}
We write
$$c_n= \int_{|y-y_n|\leq 1}
|y-y_{n}|^{-1/m}|\varphi(y)|\,dy+\int_{|y-y_n|\geq 1}
|y-y_{n}|^{-1/m}|\varphi(y)|\,dy.$$
The first term is bounded by 
$$ \left(\sup_{|y-y_n|\leq 1}|\varphi(y)|\right)\int_{|z|\leq 1}|z|^{-\frac 1m}\underset{n \to \infty}{\longrightarrow}0,$$
while the second one goes to zero by  dominated convergence.
Hence \eqref{eq:KK}.
\medskip

$\bullet$ {\bf Case 2.} There exists $c>0$ such that
$|y_{n}|\leq c<+\infty$ for all $n$. 
We have, up to some subsequences, $y_{n}\rightarrow y_{*}$ as $n\rightarrow \infty$, where $y_{*}\in \mathbb R^{[m]+1}$ such that $|y_{*}|\leq c$. 
Denoting by $\tau_{n}\varphi(\cdot)=\varphi(\cdot+y_{n}) $ and $\tau_{*}\varphi(\cdot)=\varphi(\cdot+y_{*})$ , we have
\begin{equation}
\label{cvg-of-phi}
\tau_{n}\varphi\longrightarrow \tau_{*}\varphi\,,\;
n\rightarrow+\infty\,,\;\text{in}\;\mathcal S(\mathbb R^{[m]+1})\,.
\end{equation}
From the the condition that
\eqref{asmp} converges weakly to zero in $\LLL^m$, 
we have
\[
\lim_{n\rightarrow+\infty}
\int_{\mathbb R^{[m]+1}}
V_{n}(x)\,\tau_{*}\varphi(x)dx= 0\,.\]
In fact, considered as a function on $\RR^3$, we have, by \eqref{P2} in Proposition \ref{P:prelim},
$$V_n\xrightharpoonup[n\to\infty]{} 0\text{ weakly in }L^{3m}(\RR^3).$$
Furthermore,
\begin{multline*}
\int_{\mathbb R^{[m]+1}}
V_{n}(x)\,\tau_{*}\varphi(x)dx=\int_0^{+\infty} \int_{S^{[m]}} \tau_*\varphi(r\omega)\,d\sigma(\omega)\,V_n(r)r^{[m]}dr\\
=\int_0^{+\infty} \underbrace{\left(\int_{S^{[m]}} \tau_*\varphi(r\omega)\,d\sigma(\omega)\,r^{[m]-2}\right)}_{:=\Psi(r)}V_n(r) r^{2}dr \underset{n\to\infty}{\longrightarrow}0,
\end{multline*}
since $\Psi(r)$ can be considered as a radial function in $L^{(3m)'}(\RR^3)$ for $1<m<+\infty$.
%

%
%
On the other hand, we have
by the fundamental theorem of calculus and integration by parts
\begin{align*}
&\int_{\mathbb R^{[m]+1}}V_{n}(|y|)\,\bigl(\tau_{n}\varphi(y)-\tau_{*}\varphi(y)\bigr)dy\\
=&\int_{0}^{1}\int_{\mathbb R^{[m]+1}}\left\langle\nabla V_{n}(y)\,,
(y_{*}-y_{n})\varphi\bigl(y+s(y_{n}-y_{*})+y_{*}\bigr)\right\rangle dyds\,.
\end{align*}
After using H\"older's inequality and the energy estimate, we see the term on the righthand side is bounded by
\[
C_{m}A^{\frac1m}|y_{n}-y_{*}|\int_{0}^{1}
\Bigl(\int_{\mathbb R^{[m]+1}}
\bigl|\varphi(y+s(y_{n}-y_{*})+y_{*})\bigr|^{\frac{m}{m-1}}
|y|^{-\frac{m-[m]}{m-1}}dy\Bigr)^{\frac{m-1}{m}}ds\,.
\]
Notice that $\varphi\in \mathcal S(\mathbb R^{[m]+1})$, $|y_{*}|\leq c$
and $|y|^{-(m-[m])/(m-1)}$ is integrable near the origin of $\mathbb R^{[m]+1}$
when $m>1$. We have
\[
\lim_{n\rightarrow\infty}\int_{\mathbb R^{[m]+1}}V_{n}(y)\,\bigl(\tau_{n}\varphi(y)-\tau_{*}\varphi(y)\bigr)dy=0\,.
\]
\end{proof}

\subsection{Bessel-type inequality}
\label{SS:Bessel}
In this subsection we prove Proposition \ref{P:Pythagorean}.

We let $\{u_{{\rm L},n}\}_{n\in\mathbb N}$, $J\geq 1$, and, for $j\in\{1,\ldots,J\}$, $U^j_{\rm L}$, $(\lambda_{j,n},t_{j,n})_n$ be as in Proposition \ref{P:Pythagorean}, and define $U^j_{{\rm L},n}$ by \eqref{ULnJ} and $w_{n}^J$ by \eqref{wnJ}.

First of all, 
we have the explicit formula for 
$\bigl[ U^{j}_{\rm L} \bigr]_{\pm}(t,r)$:
\begin{equation}
\label{eq:U+-}
\bigl[ U^{j}_{\rm L} \bigr]_{+}(t,r)
=2\dot{F}^{j}(t+r)\,,\;
\bigl[ U^{j}_{\rm L} \bigr]_{-}(t,r)
=2\dot{F}^{j}(t-r)\,,
\quad j\geq 1\,,
\end{equation}
with 
\[
F^{j}(\sigma)=\frac12\sigma U_{0}^{j}(|\sigma|)
+\frac12\int_{0}^{|\sigma|}\varrho \, U_{1}^{j}(\varrho)\,d\varrho.
\]
In view of \eqref{def_pm},
 one easily verifies that
\[
\bigl[U^{j}_{{\rm L},\,n}\bigr]_{\pm}(t,r)
=\frac{1}{\lambda_{j,\,n}^{\frac1m}}
\bigl[U^{j}_{\rm L}\bigr]_{\pm}
\Bigl(\frac{t-t_{j,n}}{\lambda_{j,n}},
\frac{r}{\lambda_{j,n}}\Bigr)\,.
\]
Up to subsequences, we may assume, after translating in time
and rescaling $U^{j}_{{\rm L}}$ if necessary
\begin{equation}
\label{eq:reduced-paramtrs}
j\geq 1,\,\lim_{n\rightarrow\infty}-\frac{t_{j,n}}{\lambda_{j,n}}=\pm\infty\quad
\text{or}\quad
\forall \,n,\; t_{j,n}=0\,.
\end{equation}
\subsubsection*{Step 1. Decoupling of linear profiles} In this step, we prove
\begin{equation}
\label{eq:decoupling of profiles}
\lim_{n\rightarrow+\infty}
E_m\left(\sum_{j=1}^{J}\overrightarrow{U}^j_{{\rm L},n}(0)\right)=\sum_{j=1}^{J}E_m\left(\overrightarrow{U}^j_{\rm L}(0)\right)
\end{equation}
Recall that for any solution $u$ of the linear wave equation, we have 
$$E_m(\vec{u}(0))=E_m(\vec{u}(t))= \sum_{\pm}\int_0^{+\infty} \left| [u]_{\pm}(t,r)\right|^m\,dr,$$
where $[u]_{\pm}$ is defined in (\ref{def_pm}). Hence (for constants $C>0$ that depend on $J$ and $m$, but not on $n$)
\begin{multline*}
\left| E_m\left( \sum_{j=1}^J U^j_{{\rm L},n}(0)\right) -\sum_{j=1}^J E_m\left(U^j_{\rm L}(0)\right)\right|
\\
=\left| E_m\left( \sum_{j=1}^J U^j_{{\rm L},n}(0)\right) -\sum_{j=1}^J E_m\left(U^j_{{\rm L},n}(0)\right)\right|\\
\leq C\sum_{\substack{ j\neq k\\ \pm}} \int_{0}^{+\infty} \left| \left[ U^j_{{\rm L},n}\right]_{\pm}(0,r)\right|^{m-1}\left| \left[ U^k_{{\rm L},n}\right]_{\pm}(0,r)\right|\,dr\\
\leq C\sum_{\substack{ j\neq k\\ \pm}} \underbrace{\int_{0}^{+\infty} \left|\frac{1}{\lambda_{j,n}^{\frac{1}{m}}} \dot{F}^j\left(\frac{-t_{j,n}\pm r}{\lambda_{j,n}}\right) \right|^{m-1}\left|\frac{1}{\lambda_{k,n}^{\frac{1}{m}}} \dot{F}^k\left(\frac{-t_{k,n}\pm r}{\lambda_{k,n}}\right)  \right|\,dr}_{I^{\pm}_{j,k,n}}
\end{multline*}
We are thus reduced to proving that each of the term $I_{j,k,n}^{\pm}$ ($j\neq k$) goes to $0$ as $n$ goes to infinity. 
By density we may assume
$$
U^{j}_{0},\, U^{j}_{1},\, U^{k}_{0},\, U^{k}_{1}\in C^{\infty}_{0},
$$
and thus $\dot{F}^j,\,\dot{F}^k\in C^{\infty}_0$. We will only consider $I_{j,k,n}^+$, whereas the proof for $I_{j,k,n}^-$ is the same. Extracting subsequences and arguing by contradiction, we can distinguish without loss of generality between the following three cases.

\EMPH{Case 1} We assume
$\lim_{n\to\infty} \frac{\lambda_{k,n}}{\lambda_{j,n}}=0.$
By the change of variable $s=\frac{-t_{k,n}+r}{\lambda_{k,n}}$, we obtain
\begin{multline}
\label{Ijkn}
I_{j,k,n}^{+}=\int_{-t_{k,n}/\lambda_{k,n}}^{+\infty} \left(\frac{\lambda_{k,n}}{\lambda_{j,n}}\right)^{1-\frac 1m} \left| \dot{F}^j\left(\frac{\lambda_{k,n}s+t_{k,n}-t_{j,n}}{\lambda_{j,n}}\right)\right|^{m-1} \left|\dot{F}^k(s)\right|\,ds\\
\lesssim \left(\frac{\lambda_{k,n}}{\lambda_{j,n}}\right)^{1-\frac{1}{m}},
\end{multline}
where we have used that $\dot{F}^j$ and $\dot{F}^k$ are bounded and compactly supported. Since $\frac{\lambda_{k,n}}{\lambda_{j,n}}$ goes to $0$ as $n$ goes to infinity, we are done.

\EMPH{Case 2} We assume
$\lim_{n\to\infty} \frac{\lambda_{j,n}}{\lambda_{k,n}}=0.$
 We argue similarly by using the change of variable $s=\frac{-t_{j,n}+r}{\lambda_{j,n}}$.

\EMPH{Case 3} We assume that the sequence $\left(\lambda_{j,n}/\lambda_{k,n}+\lambda_{k,n}/\lambda_{j,n}\right)_n$ is bounded. By the pseudo-orthogonality condition (\ref{P-O}) and formula (\ref{Ijkn}), we see that $I_{j,k,n}^+$ is $0$ for large $n$, which concludes Step 1.

\subsubsection*{Step 2. End of the proof}
For $1<m<+\infty$, we introduce the notation
\begin{align*}
\Phi^{j}_{n,0}(r)=&\frac{1}{2r}\sum_{\pm}\int_{0}^{r}
\bigl|\bigl[U^{j}_{{\rm L},\,n}\bigr]_{\pm}(0,s)\bigr|^{m-2}
\bigl[U^{j}_{{\rm L},\,n}\bigr]_{\pm}(0,s)\,ds
\\
\Phi^{j}_{n,1}(r)=&\frac{1}{2r}\sum_{\pm}
\pm \bigl|\bigl[U^{j}_{{\rm L},\,n}\bigr]_{\pm}(0,r)\bigr|^{m-2}
\bigl[U^{j}_{{\rm L},\,n}\bigr]_{\pm}(0,r)
\end{align*}
and let $\Phi^{j}_{n,\rm L}(t)$ be the  solution of the linear wave equations with initial data $(\Phi^{j}_{n,0},\Phi^{j}_{n,1})\in \mathcal{L}^{m'}$ where $m'=\frac{m}{m-1}$.
Then we have
\begin{align*}
\bigl[\Phi^{j}_{n,\rm L}\bigr]_{\pm}(0,r)=&\bigl|\bigl[ U^j_{{\rm L},n}\bigr]_\pm(0,r)\bigr|^{m-2}\bigl[U^j_{{\rm L},n}\bigr]_\pm(0,r),\\
\end{align*}
and note that:
\begin{equation}
\label{eq:duality-n}
\begin{split}
E_m(\overrightarrow{U}^j_{\rm L}(0))
=E_m(\overrightarrow{U}^j_{{\rm L},n}(0))
=\int_{0}^{+\infty}\sum_{\pm}
\bigl[\Phi^{j}_{n,\rm L}\bigr]_\pm(0)\bigl[U^j_{{\rm L},n}\bigr]_\pm(0)\,
dr.
\end{split}
\end{equation}
From the weak convergence condition satisfied by the remainder term $w^J_n$, we have
by time translation and changing  variables
\begin{align*}
&\int_{0}^{+\infty}
\Bigl(
\bigl[\Phi^{j}_{n,\rm L}\bigr]_{+}(0,r)\,
\bigl[w^{J}_{n}\bigr]_{+}(0,r)
+
\bigl[\Phi^{j}_{n,\rm L}\bigr]_{-}(0,r)\,
\bigl[w^{J}_{n}\bigr]_{-}(0,r)\Bigr)\,dr
\\
=&\int_{0}^{+\infty}
\bigl|\bigl[U^{j}_{\rm L}\bigr]_{+}(0,r)\bigr|^{m-2}
\bigl[U^{j}_{\rm L}\bigr]_{+}(0,r)\,
\lambda_{j,n}^{\frac1m}\bigl[w^{J}_{n}\bigr]_{+}
(t_{j,n},\,\lambda_{j,n}\,r)\,dr\\
&\qquad+
\int_{0}^{+\infty}
\bigl|\bigl[U^{j}_{\rm L}\bigr]_{-}(0,r)\bigr|^{m-2}
\bigl[U^{j}_{\rm L}\bigr]_{-}(0,r)\,
\lambda_{j,n}^{\frac1m}\bigl[w^{J}_{n}\bigr]_{-}
(t_{j,n},\,\lambda_{j,n}\,r)\,dr\,,
\end{align*}
which goes to zero as $n\rightarrow+\infty$ for $1\leq j\leq J$. Furthermore,
$$ \int_0^{+\infty} \left| \left[\Phi_{n,\rm L}^{j}\right]_{\pm}(0,r)  \left[ U_{{\rm L},n}^k\right]_{\pm}(0,r)\right|dr= \int_0^{+\infty} \left| \left[U_{{\rm L},n}^{j}\right]_{\pm}(0,r) \right|^{m-1} \left|\left[ U_{{\rm L},n}^k\right]_{\pm}(0,r)\right|dr,$$
and, by Step 1, this goes to $0$ as $n$ goes to infinity if $j\neq k$.
Hence  from \eqref{eq:duality-n}, we have 
\begin{align*}
\sum_{j=1}^{J}E_m(\overrightarrow{U}^j_{\rm L}(0))
=&\lim_{n\rightarrow+\infty}\Biggl[
\int_{0}^{+\infty}
[u_{{\rm L},n}]_+(0,r)\Bigl(\sum_{j=1}^{J}\bigl[\Phi^{j}_{n,\rm  L}\bigr]_{+}(0,r)\Bigr)
dr\\
&\qquad\qquad+\int_{0}^{+\infty}
[u_{{\rm L},n}]_-(0,r)\Bigl(\sum_{j=1}^{J}\bigl[\Phi^{j}_{n,\rm  L}\bigr]_{-}(0,r)\Bigr)
dr\Biggr],
\end{align*}
which is bounded after using H\"older's inequality by
\[
\left[\lim_{n\rightarrow+\infty} E_{m'}\left(\sum_{j=1}^{J}\overrightarrow{\Phi^{j}_{n,\rm  L}}(0,r)\right) \right]^{1/m'}
\left[\limsup_{n\rightarrow+\infty}
E_m(\overrightarrow{u}_{{\rm L},n}(0))\right]^{1/m}.
\]
Furthermore, by the decoupling property proved in Step 1 we obtain
\[\lim_{n\rightarrow+\infty} E_{m'}\left(\sum_{j=1}^{J}\overrightarrow{\Phi^{j}_{n,\rm  L}}(0,r)\right)=\sum_{j=1}^{J}E_{m'}(\overrightarrow{\Phi_{n,\rm L}^{j}}(0)) =\sum_{j=1}^{J}E_{m}(\overrightarrow{U}^j_{\rm L}(0))\]
and this concludes the result.

\subsection{Approximation by sum of profiles}
\label{SS:approx}
We next write a lemma approximating a nonlinear solution by a sum of profiles outside a wave cone. This type of approximation is only available in space-time slabs where the $S$ norm of all the profiles remain finite. To satisfy this assumption, we will work outside a sufficiently large wave cone. 

Let $\big((u_{0,n},u_{1,n})\big)_n$ be a sequence of functions in $\LLL^m$ that has a profile decomposition with profiles $(U^j_0, U^j_1)$ and parameters $(\lambda_{j,n},t_{j,n})_n$, $j \geq 1$. Extracting subsequences and time-translating the profiles, we can assume that for all $j\geq 1$ one of the following holds:
\begin{gather}
 \label{profile_infty}
 \lim_{n\to\infty} -t_{j,n}/\lambda_{j,n}\in \{\pm \infty\}\text{ or}\\
 \label{profile_0}
 \forall n,\quad t_{j,n}=0.
\end{gather}
We will denote by $\JJJ_{\infty}$ the set of indices $j$ such that \eqref{profile_infty} holds and $\JJJ_0$ the set of indices such that \eqref{profile_0} holds. We assume
\begin{enumerate}
 \item \label{I:j0} There exists $j_0\geq 1$, $A>0$ and a global solution $U^{j_0}$ of 
 $$\left\{
 \begin{aligned}
 \partial_t^2U^{j_0}-\Delta U^{j_0}&=\iota |U^{j_0}|^{2m}U^{j_0}\indic_{\{r\geq |t|+A\}}\\
  \vec{U}^{j_0}(0,r)&=\vec{U}^{j_0}_{\rm L}(0,r),\quad r\geq A.
 \end{aligned}
\right. 
 $$
 such that $\vec{U}^{j_0}(0)\in \LLL^m$ and $\|U^{j_0}\|_{S(\{r\geq |t|+A\})}<\infty$.
 \item \label{I:JJJ0}If $j\in \JJJ_0\setminus \{j_0\}$, then the solution of \eqref{NLW} with initial $\vec{U}^j_{\rm L}(0)$ scatters in both time direction or
 $$\lim_{n\to\infty} \frac{\lambda_{j,n}}{\lambda_{j_{0,n}}}=0.$$
\end{enumerate}
For $j\geq 1$, we define $U^j$ as follows:
\begin{itemize}
 \item $U^{j_0}$ is defined as in point \eqref{I:j0} above.
 \item if $j\in \JJJ_0$ and $\lim_{n\to\infty} \frac{\lambda_{j,n}}{\lambda_{j_{0,n}}}=0$, then $U^j$ is the solution of \eqref{NLW} with initial data $\vec{U}^j_{\rm L}(0)$.
 \item if $j\in \JJJ_0$ and $\lim_{n\to\infty} \frac{\lambda_{j,n}}{\lambda_{j_{0,n}}}=\infty$, then $U^j=0$. 
 \item if $j\in \JJJ_{\infty}$, then $U^j=U^j_{\rm L}$.
\end{itemize}
We let $U^j_n$ be the corresponding modulated profiles:
$$U^j_n(t,x)=\frac{1}{\lambda_{j,n}^{1/m}}U^j\left( \frac{t-t_{j,n}}{\lambda_{j,n}},\frac{x}{\lambda_{j,n}} \right).$$
\begin{lemma}
\label{L:approx}
Assume that points \eqref{I:j0} and \eqref{I:JJJ0} above hold, let $u_n$ be the solution of \eqref{NLW} with initial data $(u_{0,n},u_{1,n})$, and $I_n$ be its maximal interval of existence. Then 
$$u_n(t,x)=\sum_{j=1}^J U^j_{n}(t,x)+w_n^J(t,x)+\eps_n^J(t,x),$$
where 
$$\lim_{J\to\infty}\limsup_{n\to\infty}\left(\left\|\eps^J_n\right\|_{S(\{t\in I_n,\; r\geq A\lambda_{j_0,n}+|t|\})}+\sup_{t\in I_n}\int_{A\lambda_{j_0,n}+|t|}^{+\infty}\left|r\partial_{t,r}\vec{\eps}_n^J(t,r)\right|^m\,dr\right)=0.$$
\end{lemma}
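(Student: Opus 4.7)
The plan is to apply the exterior long-time perturbation Lemma~\ref{L:ELTPT} to the approximate solution
$$\tilde{u}_n^J := \sum_{j=1}^{J} U^j_n + w_n^J,$$
with the cone parameter $A$ in the lemma replaced by $A\lambda_{j_0,n}$. Set $E_n := \{t\in I_n,\; r\geq A\lambda_{j_0,n}+|t|\}$. First I verify the initial-data matching: by construction $\vec{U}^j(0) = \vec{U}^j_{\rm L}(0)$ for every $j\in\JJJ_\infty\cup(\JJJ_0\setminus\{j_0\})$ (those indices where $U^j$ is either linear or the full nonlinear solution with the given Cauchy data), while $\vec{U}^{j_0}(0,r) = \vec{U}^{j_0}_{\rm L}(0,r)$ for $r\geq A$. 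After rescaling and summing, this yields $\vec{\tilde{u}}_n^J(0,r) = (u_{0,n},u_{1,n})(r)$ on $\{r\geq A\lambda_{j_0,n}\}$, so the linear evolution $R_{\rm L}$ of the initial-data difference is supported in the cone $\{r\leq A\lambda_{j_0,n}+|t|\}$ and does not contribute on $E_n$.

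The first substantive step is to bound $\|\tilde{u}_n^J\|_{S(E_n)}$ uniformly in $J$ for large $n$. Using Theorem~\ref{T:Strichartz} and \eqref{dispersive} for $w_n^J$, and changing variables to each profile's frame, the contribution of each $\|U^j_n\|_{S(E_n)}$ is estimated separately: for $j=j_0$ by the standing hypothesis $\|U^{j_0}\|_{S(\{r\geq |t|+A\})}<\infty$; for scattering profiles by their global finite $S$-norm; for concentrated profiles with $\lambda_{j,n}/\lambda_{j_0,n}\to 0$ the rescaled lower limit $A\lambda_{j_0,n}/\lambda_{j,n}\to+\infty$ together with the radial decay from Proposition~\ref{P:prelim}\eqref{P4} forces $\|U^j_n\|_{S(E_n)}\to 0$; and for $j\in\JJJ_\infty$ by Theorem~\ref{T:Strichartz}. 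For indices above some $J_0$, the Bessel inequality \eqref{Pythagorean} ensures that $\|\vec{U}^j_{\rm L}(0)\|_{\LLL^m}$ is small enough for Proposition~\ref{prop:LWP} to give $\|U^j\|_{S(\R)}\lesssim\|\vec{U}^j_{\rm L}(0)\|_{\LLL^m}$, so the tail $\sum_{j\geq J_0}\|U^j\|^{2m+1}_{S(\R)}$ is finite uniformly in $J$, bounded only in terms of $\limsup_n\|\vec{u}_{{\rm L},n}(0)\|_{\LLL^m}$.

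The main obstacle is to control the source term
$$e_n^J := (\partial_t^2-\Delta)\tilde{u}_n^J - \iota\,|\tilde{u}_n^J|^{2m}\tilde{u}_n^J\,\indic_{E_n}.$$
Since each nonlinear $U^j_n$ solves \eqref{NLW} on $E_n$ (globally if $U^j$ is a full nonlinear profile, on the exterior cone if $j=j_0$), while the linear profiles and $w_n^J$ satisfy the linear wave equation, on $E_n$ we have
$$e_n^J = \iota\sum_{j:\,U^j\text{ nonlinear}} |U^j_n|^{2m}U^j_n \;-\; \iota\,|\tilde{u}_n^J|^{2m}\tilde{u}_n^J.$$
I would split this into three pieces. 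The pseudo-orthogonal nonlinear error $\sum_{j=1}^{J}|U^j_n|^{2m}U^j_n - |\sum_{j=1}^{J} U^j_n|^{2m}\sum_{j=1}^{J} U^j_n$ is handled by the standard argument: cross products $|U^j_n|^{k}\,|U^l_n|^{2m+1-k}$ with $j\neq l$ vanish in $L^1_t L^m_x(r^m dr)$ as $n\to\infty$ thanks to \eqref{P-O} and change of variables, using that $|U^j_n|$ and $|U^l_n|$ cannot concentrate on overlapping regions in the joint rescaled frame. The surplus linear-profile contributions $|U^j_n|^{2m}U^j_n$ for $j\in\JJJ_\infty$ give negligible $L^1_t L^m_x(r^m dr)$ norm on $E_n$ because the shift $|t_{j,n}|/\lambda_{j,n}\to\infty$ pushes the effective time window to $\pm\infty$, where $\|U^j_{\rm L}\|_{S}\to 0$ by dominated convergence from $\|U^j_{\rm L}\|_{S(\R)}<\infty$. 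Finally, the mixed terms involving $w_n^J$ are bounded by H\"older's inequality, the uniform $S$-bound on $\sum_j U^j_n$, and \eqref{dispersive}, producing a quantity that is $o_J(1)$ after letting $n\to\infty$ and then $J\to\infty$. Collecting, for $J,n$ large the smallness hypothesis of Lemma~\ref{L:ELTPT} is verified, and its conclusion delivers both the $S$-norm and the weighted $\LLL^m$-type energy control of $\eps_n^J$ stated in the lemma.
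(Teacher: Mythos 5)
Your overall strategy coincides with the paper's: the printed proof is a single line, ``apply Lemma~\ref{L:ELTPT} with $\tilde{u}_n=\sum_{j\in\JJJ_0}U^j_n$'', and your proposal fleshes out the standard details that the authors omit. The only structural difference is that you place the linear components ($w_n^J$ and the profiles with $j\in\JJJ_\infty$) inside the approximate solution $\tilde{u}_n^J$ rather than into the linear remainder $R_{\rm L}$ of Lemma~\ref{L:ELTPT}; both splittings are viable, since on the exterior region the missing nonlinearity of those pieces is controlled exactly by the pointwise vanishing of the rescaled indicator functions that you invoke, together with \eqref{dispersive} and the pseudo-orthogonality \eqref{P-O}. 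Your uniform-in-$J$ bound on the $S$-norm via the Bessel inequality and small-data theory is also the standard (correct) argument.

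There is, however, one concrete error in the data-matching step. You assert that $\vec{U}^j(0)=\vec{U}^j_{\rm L}(0)$ for every $j\in\JJJ_\infty\cup(\JJJ_0\setminus\{j_0\})$ and conclude that $R_{\rm L}$ is supported in $\{r\leq A\lambda_{j_0,n}+|t|\}$ and ``does not contribute on $E_n$''. This fails for the indices $j\in\JJJ_0$ with $\lambda_{j,n}/\lambda_{j_0,n}\to\infty$: for those the construction sets $U^j=0$ while $U^j_{\rm L}\not\equiv 0$ in general, so in your splitting $\vec{\tilde{u}}_n^J(0)\neq(u_{0,n},u_{1,n})$ on $\{r\geq A\lambda_{j_0,n}\}$ and the corresponding $U^j_{{\rm L},n}$ survive in $R_{\rm L}$. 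Moreover these large-scale profiles do \emph{not} vanish on the exterior region: in the $j$-th rescaled frame the set $\{r\geq A\lambda_{j_0,n}+|t|\}$ becomes $\{\rho\geq A\lambda_{j_0,n}/\lambda_{j,n}+|s|\}$, whose lower limit tends to $0$, so $\|U^j_{{\rm L},n}\|_{S(E_n)}$ tends to $\|U^j_{\rm L}\|_{S(\{\rho\geq|s|\})}$, which need not be small --- in contrast with the small-scale case $\lambda_{j,n}/\lambda_{j_0,n}\to 0$ that you do treat. These profiles therefore require a separate argument: they must be fed into the $R_{\rm L}$-term of Lemma~\ref{L:ELTPT}, and their collective contribution shown to be below $\eps_M$; in the applications (Case 3 of Subsections~\ref{SS:scattering} and~\ref{SS:global}) this is exactly why $j_0$ is selected through \eqref{cond_j0}, so that every such profile satisfies $\|\vec{U}^j_{\rm L}(0)\|_{\LLL^m}\leq\delta_1$ and the sum is controlled via the Bessel inequality \eqref{Pythagorean} and almost-orthogonality in the $S$-norm. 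As written, your proof silently discards these terms, so the step ``$R_{\rm L}$ does not contribute on $E_n$'' is a genuine gap that needs to be filled.
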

\begin{proof}
 This follows from Lemma \ref{L:ELTPT} with 
 $$\tilde{u}_n=\sum_{j\in \JJJ_0}U^j_n.$$
 We omit the details of the proof that are by now standard (see e.g. the proof of the main theorem in \cite{BaGe99}).
\end{proof}

\subsection{Exterior energy of a sum of profiles}
\begin{proposition}
\label{P:exterior_profiles}
Let $\{ (u_{0,n},u_{1,n}) \}_{n \in \mathbb{N}}$ be a bounded sequence in $\LLL^m$
that has a profile decomposition with profiles $\{ U_{\rm L}^{j} \}_{j\geq 1}$ and parameters
$ \left\{ (t_{j,n}, \lambda_{j,n})_n \right\}_{j\geq 1}$. Let
$ \left\{ (\theta_{n},\rho_{n},\sigma_n)\right\}_{n \in \mathbb{N}}$ be a sequence such that $ 0 \leq \rho_{n}<\sigma_n\leq\infty$, $\theta_n\in \RR$. Let $k\geq 1$. Then, extracting a subsequence if necessary
\begin{equation}
\label{Eqn:Orth}
o_n(1)+\int_{\rho_n}^{\sigma_n}|r\partial_{r,t} u_{{\rm L},n}(\theta_n,r)|^m\,dr
\\
\geq \int_{\rho_n}^{\sigma_n}\left|r\partial_{r,t} U_{{\rm L},n}^k(\theta_n,r)\right|^m\,dr
\end{equation}
where $\lim_n o_n(1)=0$, $u_{{\rm L},n}$ is the solution of the linear wave equation with initial data $(u_{0,n},u_{1,n})$ and $U_{{\rm L},n}^k$ is defined in \eqref{ULnJ}.
%
\end{proposition}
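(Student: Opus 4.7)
The first step is to rescale around the $k$-th profile parameters. Define
\[
\tilde u_{{\rm L},n}(t,r):=\lambda_{k,n}^{1/m}u_{{\rm L},n}(t_{k,n}+\lambda_{k,n}t,\lambda_{k,n}r),\quad \tilde\theta_n:=\frac{\theta_n-t_{k,n}}{\lambda_{k,n}},\quad \tilde\rho_n:=\frac{\rho_n}{\lambda_{k,n}},\quad \tilde\sigma_n:=\frac{\sigma_n}{\lambda_{k,n}}.
\]
Then $\tilde u_{{\rm L},n}$ still solves \eqref{LW}, both sides of \eqref{Eqn:Orth} are invariant under the change of variable $r=\lambda_{k,n}\tilde r$, and hypothesis \eqref{WeakCV} becomes $\vec{\tilde u}_{{\rm L},n}(0)\rightharpoonup\vec{U}^k_{\rm L}(0)$ weakly in $\LLL^m$. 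After extraction, I would assume $\tilde\theta_n\to\tilde\theta_\infty\in[-\infty,+\infty]$ and $\tilde\rho_n\to\tilde\rho_\infty,\tilde\sigma_n\to\tilde\sigma_\infty\in[0,+\infty]$.

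\textbf{Case $\tilde\theta_\infty\in\RR$.} First upgrade the weak convergence to the running time $\tilde\theta_n$: for any $\phi$ in the dual of $\LLL^m$,
\[
\langle\vec{\tilde u}_{{\rm L},n}(\tilde\theta_n),\phi\rangle=\langle\vec{\tilde u}_{{\rm L},n}(0),S_{\rm L}(\tilde\theta_n)^*\phi\rangle,
\]
and \eqref{11} combined with the strong continuity in $t$ of $S_{\rm L}(t)^*\phi$ (verified for smooth $\phi$ from \eqref{2} and extended by density) yields $\vec{\tilde u}_{{\rm L},n}(\tilde\theta_n)\rightharpoonup\vec{U}^k_{\rm L}(\tilde\theta_\infty)$ weakly in $\LLL^m$. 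Thus $r\partial_r\tilde u_{{\rm L},n}(\tilde\theta_n,\cdot)$ and $r\partial_t\tilde u_{{\rm L},n}(\tilde\theta_n,\cdot)$ converge weakly in $L^m(dr)$, and weak lower semi-continuity of the $L^m$-norm on varying intervals (testing on $(\rho',\sigma')\subset(\tilde\rho_\infty,\tilde\sigma_\infty)$ and sending $\rho'\to\tilde\rho_\infty$, $\sigma'\to\tilde\sigma_\infty$ from inside), together with strong continuity of $U^k_{\rm L}$ in $\LLL^m$ for the right-hand side of \eqref{Eqn:Orth}, gives the desired inequality.

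\textbf{Case $\tilde\theta_\infty=\pm\infty$.} Here no direct weak-limit at time $\tilde\theta_n$ is available, and I would pass to the Riemann representation \eqref{def_pm}: $[v]_\pm(t,r)=2\dot F_v(t\pm r)$. The weak convergence at $t=0$ translates, through the bounded linear map $(v_0,v_1)\mapsto\dot F_v$, into $\dot F_n\rightharpoonup\dot F^k$ weakly in $L^m(\RR)$. The identities
\[
r\partial_r v=\dot F_v(t+r)+\dot F_v(t-r)-v,\qquad r\partial_t v=\dot F_v(t+r)-\dot F_v(t-r),
\]
and the substitutions $s=\tilde\theta_n\pm r$ convert the integrals in \eqref{Eqn:Orth} into integrals of $|\dot F_n|^m$ and $|\dot F^k|^m$ over the shifted intervals $(\tilde\theta_n+\tilde\rho_n,\tilde\theta_n+\tilde\sigma_n)$ and $(\tilde\theta_n-\tilde\sigma_n,\tilde\theta_n-\tilde\rho_n)$, plus lower-order terms containing $v$. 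A further extraction makes these four endpoints converge in $[-\infty,+\infty]$, and the same varying-interval weak lower semi-continuity, now on $L^m(\RR)$, concludes. The $v$-corrections are controlled by $r|v(t,r)|^m\lesssim\|\vec v(t)\|_{\LLL^m}^m$ from Proposition~\ref{P:prelim}\eqref{P4}, which turns them into boundary contributions at $\tilde\rho_n$ and $\tilde\sigma_n$ that fit the same framework.

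\textbf{Main obstacle.} The infinite-time case is the delicate one. Since no weak limit of $\vec{\tilde u}_{{\rm L},n}(\tilde\theta_n)$ in $\LLL^m$ is available, the argument must be carried out at the level of the characteristic data $\dot F_n$, and the asymptotic behaviour of the four shifted endpoints $\tilde\theta_n\pm\tilde\rho_n,\tilde\theta_n\pm\tilde\sigma_n$ must be tracked carefully (with the degenerate possibility that a shifted interval collapses, in which case the corresponding side of the inequality is simply zero in the limit). A secondary technicality is bridging the proposition's $|r\partial_{r,t}v|^m$ form with the cleaner Riemann form $|[v]_\pm|^m$, accomplished via the pointwise bound $r^{1/m}|v|\lesssim\|\vec v\|_{\LLL^m}$.
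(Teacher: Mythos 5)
The paper itself does not prove this proposition (it defers to \cite[Proposition 3.12]{DuRoy15P}), so I am judging your argument on its own. Your skeleton — rescale to the $k$-th frame, extract limits of $\tilde\theta_n,\tilde\rho_n,\tilde\sigma_n$, and split according to whether $\tilde\theta_\infty$ is finite — is the right one, and your finite-time case is essentially correct: weak convergence propagates to $\vec{\tilde u}_{{\rm L},n}(\tilde\theta_n)\rightharpoonup\vec U^k_{\rm L}(\tilde\theta_\infty)$, and weak lower semicontinuity of $f\mapsto\int_{I}|f|^m$ on an exhaustion of the limit interval does the job with constant $1$.

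The infinite-time case, however, has two genuine gaps as written. First, for $m\neq 2$ there is no Pythagorean identity: $|a+b|^m+|a-b|^m$ is only \emph{equivalent} to $|a|^m+|b|^m$, so "converting the integrals into integrals of $|\dot F_n|^m$ and $|\dot F^k|^m$ over shifted intervals" costs multiplicative constants on both sides and only yields \eqref{Eqn:Orth} with a constant $c_m<1$ in front of the right-hand side, not the stated inequality. The fix is the sharp one-sided bound $|a+b|^m+|a-b|^m\geq 2\max\bigl(|a|^m,|b|^m\bigr)$ (the map $a\mapsto|a+b|^m+|a-b|^m$ is convex and even, hence minimized at $a=0$), combined with the fact that as $\tilde\theta_n\to+\infty$ (say) a profile with compactly supported $\dot F^k$ has only the outgoing component $\dot F^k(\tilde\theta_n-r)$ surviving on $(\tilde\rho_n,\tilde\sigma_n)$, so that its localized energy equals $2\int|\dot F^k(\tilde\theta_n-r)|^m\,dr+o(1)$ while the left-hand side dominates $2\int|\dot F_n(\tilde\theta_n-r)|^m\,dr$ exactly; one then concludes by weak lower semicontinuity of $\dot F_n\rightharpoonup\dot F^k$ in $L^m(\RR)$ and a density argument in the profile. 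Second, your treatment of the $v$-correction is not valid: the pointwise bound $r|v(t,r)|^m\lesssim\|\vec v(t)\|_{\LLL^m}^m$ gives $\int_{\rho_n}^{\sigma_n}|v|^m\,dr\lesssim\log(\sigma_n/\rho_n)$, which is \emph{not} a bounded boundary contribution on a general annulus (the equivalence \eqref{P4} holds only on half-lines $(R,\infty)$). The correct observation is that, after truncating $\dot F^k$ to compact support, the only region where the comparison is nontrivial is the $O(1)$-neighborhood $J_n$ of the light cone $r\approx|\tilde\theta_n|\to\infty$; there $|J_n|$ is bounded and $|v(\tilde\theta_n,r)|^m\lesssim r^{-1}\to 0$, so $\int_{J_n}|v|^m\,dr\to 0$, while outside $J_n$ the profile contributes $o(1)$ and the left-hand side is trivially nonnegative. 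Without these two ingredients the infinite-time case — which you correctly identify as the crux — does not close.
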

(see \cite[Proposition 3.12]{DuRoy15P} for the proof.) 

\section{Exterior energy for solutions of the nonlinear equation}
\label{S:exterior}
\subsection{Preliminaries on singular stationary solutions}
We recall from \cite{DuKeMe12c,DuRoy15P,Shen13} the following result on existence of stationary solutions for equation \eqref{NLW}
\begin{proposition}
\label{P:stat}
Let $\ell \in \mathbb{R}\setminus \{0\}$. Assume $m>1$, $m\neq 2$. There exists $R_{\ell}\geq 0$ and a maximal radial $C^{2}$ solution $Z_{\ell}$ of 
\begin{equation}
\label{Zell1}
\Delta Z_{\ell}  + \iota |Z_{\ell}|^{2m} Z_{\ell} = 0 \quad\text{on}\quad\mathbb{R}^{3} \cap \{ |x|>R_{\ell}\}
\end{equation}
such that
\begin{equation}
\label{Zell2}
|r Z_{\ell}(r)  - \ell|+\left| r^2Z_{\ell}'(r)+\ell\right| \lesssim \frac{1}{r^{2m-2}}, \quad r \gg 1.
\end{equation}
Furthermore
\begin{itemize}
 \item if $\iota=+1$ (focusing nonlinearity),  $R_{\ell}=0$ and $Z_{\ell} \notin L^{3m}(\R^3)$.
\item if $\iota=-1$ (defocusing nonlinearity), $R_{\ell}>0$ and 
\begin{equation}
\label{Eqn:limZl}
\lim_{r\to R_{\ell}}|Z_{\ell}(r)|=+\infty.
\end{equation}
\end{itemize}
\end{proposition}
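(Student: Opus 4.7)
The proposition is an ODE statement, so my plan is to (i) construct $Z_\ell$ in a neighborhood of infinity by a contraction mapping exploiting the asymptotic behavior \eqref{Zell2}, (ii) use standard local ODE theory to extend the solution to a maximal radius $R_\ell\geq 0$, and (iii) use a conservation-type identity together with the sign of $\iota$ to decide whether $R_\ell=0$ or $R_\ell>0$, and to get the blow-up statements.

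\textbf{Step 1: construction near infinity.} Writing $Z(r)=y(r)/r$, the radial equation $\Delta Z+\iota |Z|^{2m}Z=0$ becomes
\begin{equation*}
y''(r)=-\iota\,\frac{|y(r)|^{2m}y(r)}{r^{2m}},
\end{equation*}
which, since $m>1$, has an integrable right-hand side at infinity when $y$ is bounded. Imposing $y(\infty)=\ell$ and $y'(\infty)=0$ I rewrite the equation as the integral equation
\begin{equation*}
y(r)=\ell-\iota\int_r^{\infty}(t-r)\,\frac{|y(t)|^{2m}y(t)}{t^{2m}}\,dt,
\end{equation*}
and solve it by Picard iteration in the complete metric space of continuous functions on $[R_0,\infty)$ with $\sup_{r\ge R_0} r^{2m-2}|y(r)-\ell|\le |\ell|/2$, for $R_0=R_0(\ell)$ large enough. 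Differentiating the fixed-point identity gives $y'(r)=O(r^{-(2m-1)})$, hence $rZ(r)\to\ell$ and $r^{2}Z'(r)\to -\ell$ with the error bound \eqref{Zell2}.

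\textbf{Step 2: maximal extension.} The ODE is smooth in $(y,y',r)$ on $\RR\times\RR\times(0,\infty)$, so the solution produced in Step 1 extends to a maximal interval $(R_\ell,\infty)$, $R_\ell\ge 0$, with the standard blow-up alternative: either $R_\ell=0$, or $\limsup_{r\to R_\ell^+}(|y(r)|+|y'(r)|)=+\infty$. Since $R_\ell$ is the one singled out by the prescribed asymptotics, it only depends on $\ell$.

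\textbf{Step 3: energy identity and dichotomy.} The function
\begin{equation*}
E(r)=\tfrac12\bigl(y'(r)\bigr)^2+\frac{\iota}{2m+2}\,\frac{|y(r)|^{2m+2}}{r^{2m}}
\end{equation*}
satisfies, by direct computation,
\begin{equation*}
E'(r)=-\,\frac{2m\,\iota}{2m+2}\,\frac{|y(r)|^{2m+2}}{r^{2m+1}},
\end{equation*}
and $E(\infty)=0$ by Step 1. In the focusing case $\iota=+1$, $E$ is nonnegative and nonincreasing, and integration of $E'$ from $r$ to $+\infty$ gives an $L^{2m+2}$-type control of $|y|/r^{\frac{m}{m+1}}$ on $[r,\infty)$; combined with $(y')^2\le 2E(r)$, this shows $y$ and $y'$ stay in a compact set of $\RR^2$ on every $[\epsilon,\infty)$, ruling out the blow-up alternative, so $R_\ell=0$. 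The statement $Z_\ell\notin L^{3m}(\RR^3)$ then follows from the asymptotic behavior of $y$ as $r\to 0^+$, which can only converge to the nontrivial equilibrium of the autonomous equation obtained from $y$ via the substitution $y(r)=r^{1-1/m}w(\log r)$, giving $|Z_\ell(r)|\sim c_\star r^{-1/m}$ and a nonintegrable density at the origin. In the defocusing case $\iota=-1$, $E$ is nondecreasing so $E(r)\le 0$, whence
\begin{equation*}
\tfrac12(y')^2\le\frac{1}{2m+2}\,\frac{|y|^{2m+2}}{r^{2m}},
\end{equation*}
and the resulting differential inequality $|y'|\lesssim |y|^{m+1}/r^m$ forces $|y|$ (and hence $|Z|$) to blow up in finite radius; the finite blow-up point is $R_\ell>0$ and \eqref{Eqn:limZl} is obtained from this inequality together with monotonicity of $|y|$ near $R_\ell$.

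\textbf{Main obstacle.} The only delicate point is the focusing statement that $R_\ell=0$ \emph{and} $Z_\ell\notin L^{3m}$: Step 3 gives nonblow-up rather cheaply (the energy is the right quantity), but tracking the precise behavior of $y$ as $r\to 0^+$ to exclude the trivial equilibrium and identify the $r^{-1/m}$ profile requires a phase-plane analysis of the autonomous companion ODE, which is where the condition $m\neq 2$ plays a role (it ensures the nontrivial equilibrium is of the expected hyperbolic type). I would quote this part from \cite{DuKeMe12c,Shen13,DuRoy15P} rather than redo the phase-plane argument.
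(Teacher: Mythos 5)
Your Steps 1 and 2, and your choice of energy $E(r)=\tfrac12(y')^2+\frac{\iota}{2m+2}|y|^{2m+2}r^{-2m}$, coincide with the paper's construction (the paper's $G$ is your $E$ in the focusing case). The genuine problem is the defocusing half of Step 3. From $E\le 0$ you deduce $|y'|\lesssim |y|^{m+1}/r^{m}$, which is an \emph{upper} bound on $|y'|$; an upper bound on the speed can never force blow-up (it is consistent, for instance, with $y$ having a finite limit as $r\to 0^{+}$). To force blow-up at a positive radius one needs a \emph{lower} bound on the growth of $y$ as $r$ decreases, and the sign $E\le 0$ gives the inequality in the unusable direction (it is $E\ge 0$ that would yield $-y'\gtrsim |y|^{m+1}/r^{m}$). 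The paper's argument is different: assuming $R_\ell=0$, it sets $h(s)=Z_\ell(1/s)$, so that $h''=s^{-4}|h|^{2m}h$ with $h(s)/s\to 1$ and $h'(s)\to 1$ as $s\to 0^{+}$, and then uses the convexity of $h$ (the nonlinearity has the blow-up sign in the forward $s$ direction) and a classical ODE comparison to show that $h$ blows up at finite $s$, i.e.\ $Z_\ell$ blows up at a positive radius. Your use of $E\le 0$ \emph{is} the right tool for \eqref{Eqn:limZl} once $R_\ell>0$ is known (if $|y|$ stayed bounded near $R_\ell$ then so would $|y'|$, contradicting the continuation criterion), but it does not prove $R_\ell>0$.

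Two smaller points. In the focusing case, integrating $E'$ from $r$ to $+\infty$ only yields the identity $E(r)=\frac{m}{m+1}\int_r^{\infty}|y(s)|^{2m+2}s^{-2m-1}\,ds$, which does not by itself bound $E(r)$ as $r\to R_\ell^{+}$; the missing one-line ingredient, which is exactly what the paper uses, is $|E'(r)|\le \frac{2m}{r}\,E(r)$ followed by Gr\"onwall on $(R_\ell,r_1]$, and this is what contradicts the continuation alternative when $R_\ell>0$. For $Z_\ell\notin L^{3m}(\R^3)$, your phase-plane route is not the one taken in the paper or, as far as I can tell, in the cited references: the paper first upgrades a hypothetical $L^{3m}$ solution to a $C^{2}$ solution on all of $\R^{3}$ (distributional equation, then Trudinger's regularity and elliptic bootstrap) and then derives a contradiction by integrating a Pohozaev-type identity for $v=r^{1/m}Z_{1}$ whose right-hand side carries the factor $\frac{2-m}{m}$ --- this is exactly where $m\neq 2$ enters. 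Your assertion that $w$ ``can only converge to the nontrivial equilibrium'' is precisely the nontrivial content (one must exclude the regular-at-the-origin behavior, i.e.\ the trivial equilibrium), so deferring it is legitimate only if you are prepared to supply or correctly cite that exclusion.
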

\begin{remark}
\label{R:Zscaling}
We will construct $Z_1$ and let 
$$ Z_{\ell}=\frac{\pm 1}{|\ell|^{\frac{1}{m-1}}}Z_1\left( \frac{r}{|\ell|^{\frac{m}{m-1}}}\right),$$
(where $\pm$ is the sign of $\ell$) that satisfies the conclusion of Proposition \ref{P:stat} for all $\ell\in \RR\setminus\{0\}$. In particular:
$$ R_{\ell}=R_1|\ell|^{\frac{m}{m-1}}.$$
Let us mention that the uniqueness of $Z_{\ell}$ can be proved by elementary arguments. However it will follow from Proposition \ref{P:BB1} and we will not prove it here.
\end{remark}

\begin{proof}
The proof is essentially contained in \cite{DuKeMe12c,Shen13} (focusing case for $m>2$ and $m\in (1,2)$ respectively) and \cite{DuRoy15P} (defocusing case for $m>2$). We give a sketch for the sake of completeness.

We assume $\ell=1$ (see Remark \ref{R:Zscaling}).

\emph{Existence for large $r$}.
Letting $g=rZ_1$, we see that the equation on $Z_1$ is equivalent to 
\begin{equation}
 \label{eq_g}
 g''(r)=-\frac{\iota}{r^{2m}}|g(r)|^{2m}g(r),
\end{equation} 
it is sufficient to find a fixed point for the operator $A$ defined by
$$A(g)=1-\int_r^{\infty} \int_{s}^{\infty}\frac{\iota}{\sigma^{2m}} |g(\sigma)|^{2m}g(\sigma)\,d\sigma\,ds$$
in the ball 
$$B=\left\{g\in C^{0}\left( [r_0,+\infty),\R \right),\quad d(g,1)\leq M\right\},$$
where $r_0$ and $M$ are two large parameters and 
$$d(g,h):=\sup_{r\geq r_0} \left( r^{2m-2}|g(r)-h(r)| \right).$$
Noting that $(B,d)$ is a complete metric space, it is easy to prove that $A$ is a contraction on $B$ assuming $M\gg 1$, and $r_0\gg 1$ (depending on $M$), and thus that $A$ has a fixed point $g_1$.  The fact that $Z_1:=\frac{1}{r}g_1$ satisfy the estimates \eqref{Zell2} follows easily. Let $R_1\geq 0$ such that $(R_1,+\infty)$ is the maximal interval of existence of $g_1$ as a solution of the ordinary differential equation. 

\emph{Focusing case}. We next assume $\iota=1$ and prove that $R_{1}=0$ and $Z_{\ell}\notin L^{3m}$. Let
$$G(r)=\frac{1}{2}g'(r)^2+\frac{1}{(2m+2)r^{2m}}|g(r)|^{2m+2}.$$
By \eqref{eq_g}, if $r\in (R_1,+\infty)$,
$$ G'(r)=-\frac{m}{(m+1)r^{2m+1}}|g(r)|^{2m+2}.$$
Hence
$$|G'(r)|\leq \frac{C}{r}\,G(r)\,.$$
This proves that $G$ is bounded on $(R_1,+\infty)$ if $R_1>0$, a contradiction with the standard ODE blow-up criterion. Thus $R_1=0$. 

The fact that $Z_1\notin L^{3m}(\R^3)$ is non-trivial but classical. Assume by contradiction that $Z_1\in L^{3m}$. Then one can prove (see \cite{DuKeMe12c}) that $Z_1$ is a solution in the distributional sense on $\R^3$ of 
$$-\Delta Z_1=|Z_1|^{2m}Z_1.$$
Noting that $|Z_1|^{2m}\in L^{3/2}$, one can use the work of Trudinger \cite{Trudinger68} to prove that $Z_1\in L^{\infty}$, and thus, by elliptic regularity, that $Z_1$ is $C^2$ on $\R^3$. To deduce a contradiction, we introduce, as in \cite{Shen13}, the function $v(r)=r^{\frac{1}{m}}Z_1$. It is easy to check, using \eqref{Zell2}, for the limits at infinity and the fact that $Z_1$ is $C^2$ for the limit at $0$, that
$$\lim_{r\to 0^+} v(r)=\lim_{r\to 0^+} rv'(r)=\lim_{r\to +\infty} v(r)=\lim_{r\to +\infty} rv'(r)=0.$$
Furthermore,
$$v''+\frac{2}{r}\left( 1-\frac{1}{m} \right)v'+\frac{1}{r^2}\left( \frac{1}{m^2}-\frac{1}{m} \right)v+\frac{1}{r^2}|v|^{2m}v=0.$$
Integrating the identity
\begin{equation}
\label{identity}
\frac{d}{dr}\left(r^2\frac{|v'(r)|^2}{2}-\frac{m-1}{2m^2}v^2(r)+\frac{|v(r)|^{2m+2}}{2m+2}\right)=\frac{2-m}{m}r|v'(r)|^2 
\end{equation} 
between $0$ and $+\infty$, one see that $v$ must be a constant, a contradiction with the construction of $Z_1$. Note that we have used in this last step that the constant $\frac{2-m}{m}$ in the right-hand side of the identity \eqref{identity} is non-zero, i.e that $m\neq 2$.

\emph{Defocusing case}. Assume $\iota=-1$. We prove that $R_1>0$ by contradiction. Assume $R_1=0$ and let 
$$h(s):=Z_{\ell}\left( \frac{1}{s} \right).$$
Then 
$$h''(s)=\frac{1}{s^4}|h(s)|^{2m}h(s)$$
and by \eqref{Zell2},
$$\lim_{s\to 0^+}\frac{h(s)}{s}=\lim_{s\to 0^+} h'(s)=1.$$
By a classical ODE argument (see \cite{DuRoy15P} for the details), one can prove that $h$ blows up in finite time, a contradiction. This proves that $R_1>0$. The condition \eqref{Eqn:limZl} follows from the standard ODE blow-up criterion.
\end{proof}

\subsection{Statement}
One of the main ingredient of the proof of Theorem \ref{T:scattering} is a bound from below of the exterior $\LLL^m$-energy for nonzero, $\LLL^m$ solutions of \eqref{NLW}. 
It is similar to \cite[Propostions 2.1 and 2.2]{DuKeMe13} and \cite[Propositions 4.1 and 4.2]{DuRoy15P}. The statements in these articles are divided between two case, whether the support of $(u_0,u_1)-(Z_{\ell},0)$ is compact for all $\ell\neq 0$ or not. We give below an unified statement.

If $(u_0,u_1)\in \LLL^m$ and $A>0$ we will denote by $\TTT_A(u_0,u_1)$ the element of $\LLL^m$ defined by
\begin{align}
\label{def_TA}
 \TTT_A(u_0,u_1)(r)&= (u_0,u_1)(r) \text{ if } r>A\\
 \TTT_A(u_0,u_1)(r)&= (u_0(A),0) \text{ if } r\leq A.
\end{align}
We note that 
\begin{equation}
 \label{norm_TA}
\left\|\TTT_A(u_0,u_1)\right\|^m_{\LLL^m}=\int_{A}^{+\infty}\left(\left|\partial_r u_0(r)\right|^m+\left|u_1(r)\right|^m\right)\,r^mdr.
 \end{equation} 
 We denote by $\esssupp$ the essential support of a function defined on a domain $D$ of $\R^3$:
 $$\esssupp(f)=D\setminus \bigcup\left\{ \Omega\subset D,\; \Omega \text{ is open and }f=0\text{ a.e. in }\Omega\right\}.$$
 Recall from Proposition \ref{P:stat} the definition of $Z_1$ and $R_1$.
\begin{proposition}
\label{P:BB1}
Let $u$ be a radial solution  of \eqref{NLW} with $(u_0,u_1)\in \LLL^m$. Assume that $(u_0,u_1)$ is not identically $0$. Then there exist $A>0$, $\eta>0$ such that, if $(\tilde{u}_0,\tilde{u}_1)=\TTT_A(u_0,u_1)$, and $\tilde{u}$ is the solution of 
\begin{equation}
\label{truncated_eq}
\partial_t^2\tilde{u}-\Delta \tilde{u}=\iota |\tu|^{2m}\tu \indic_{\{r\geq A+|t|\}}
\end{equation}
with initial data $(\tilde{u}_0,\tilde{u}_1)$, then $\tilde{u}$ is global, scatters in $\LLL^m$
and the following holds for all $t\geq 0$ or for all $t\leq 0$:
\begin{equation}
 \label{BB2}
 \int_{A+|t|}^{+\infty}|\partial_r \tu(r)|^mr^m\,dr+\int_{A+|t|}^{+\infty}|\partial_t \tu(r)|^mr^m\,dr\geq \eta.
\end{equation} 
\end{proposition}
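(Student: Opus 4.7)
The plan is to follow the strategy of \cite{DuKeMe13} and \cite{DuRoy15P}, combining small-data scattering of the exterior part of the equation with a channels-of-energy rigidity argument. For \textbf{Step 1 (choice of $A$ and scattering of $\tu$)}, observe that from \eqref{norm_TA} and dominated convergence, $\|\TTT_A(u_0,u_1)\|_{\LLL^m}\to 0$ as $A\to\infty$ when $(u_0,u_1)$ is not compactly supported, and as $A$ approaches the outer edge of its essential support from below in the compactly supported case. I pick $A>0$ such that $\TTT_A(u_0,u_1)\not\equiv 0$ and $\|\TTT_A(u_0,u_1)\|_{\LLL^m}$ is smaller than the small-data threshold $\delta_0$ of Proposition \ref{prop:LWP}. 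Proposition \ref{prop:LWP}, combined with the exterior long-time perturbation lemma (Lemma \ref{L:ELTPT}) applied in the cone $\{r\geq A+|t|\}$, then yields global existence of $\tu$ and scattering in $\LLL^m$ to linear solutions $v^{+}$ and $v^{-}$ at $\pm\infty$.

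For \textbf{Step 2 (channels-of-energy contradiction setup)}, I argue the lower bound \eqref{BB2} by contradiction: assume that for every $\eta>0$ both time directions fail, so that there exist sequences $t_n^{+}\to+\infty$ and $t_n^{-}\to-\infty$ along which
$$
\int_{A+|t_n^\pm|}^{+\infty}\bigl(|\partial_r\tu|^m+|\partial_t\tu|^m\bigr)r^m\,dr\longrightarrow 0.
$$
Scattering transfers this vanishing to $v^{\pm}$: at time $t_n^{\pm}$ the exterior density of $v^{\pm}$ on $\{r>A+|t_n^\pm|\}$ tends to $0$. Applying the exterior estimate \eqref{P6} to $v^{+}$ in the forward direction and to $v^{-}$ in the backward direction and letting $n\to\infty$ forces the data of both $v^{+}$ and $v^{-}$ to have zero $\LLL^m$-density on $\{r>A\}$. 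Combining this with scattering and the equation \eqref{truncated_eq} (whose nonlinearity is localized to $\{r\geq A+|t|\}$) shows $\partial_t\tu$ vanishes in the exterior cone; consequently $W(r):=\tu(0,r)$ solves the stationary equation $\Delta W+\iota|W|^{2m}W=0$ on $\{r>A\}$.

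For \textbf{Step 3 (elimination of stationary profiles)}, by direct ODE analysis of the stationary equation together with the $\LLL^m$ decay given by Proposition \ref{prop:fs}, $W$ coincides with $Z_\ell$ on $\{r>A\}$ for some $\ell\in\R$. If $\ell=0$ then $W\equiv 0$ there, so by construction $(u_0,u_1)\equiv 0$ on $\{r>A\}$, contradicting the choice of $A$ in Step 1. If $\ell\neq 0$: in the focusing case Proposition \ref{P:stat} gives $Z_\ell\notin L^{3m}(\R^3)$, while $u_0\in\dotW\hookrightarrow L^{3m}$ by Proposition \ref{P:prelim}\eqref{P2}, a contradiction; in the defocusing case $Z_\ell$ is singular at $R_\ell>0$, and a scaling/translation argument places $R_\ell$ inside $\{r\geq A+|t|\}$ for some $t$, contradicting the $\LLL^m$ boundedness of $\vec{\tu}$.

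The main obstacle is the rigidity in Step 2: deducing, from the vanishing of exterior energy on both temporal ends, that $\tu$ is stationary in the exterior cone. In the Hilbertian energy space $\dot{\HHH}^1$, this step relies on orthogonal projections onto the one-dimensional space of obstructing stationary data; such projections are not available in the non-Hilbert space $\LLL^m$. I plan to replicate the approach of \cite{DuRoy15P}, using the explicit d'Alembert representation via $[v]_\pm$ (cf.~\eqref{def_pm}) together with the Bessel-type substitute \eqref{Pythagorean} from Proposition \ref{P:Pythagorean} as a replacement for Pythagorean orthogonality.
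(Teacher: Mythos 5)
Your Step 1 is fine, but the heart of the argument (Steps 2--3) has two fatal gaps. First, the logical setup is wrong: you fix a single $A$ with small exterior norm and try to contradict the failure of \eqref{BB2} at that $A$. This cannot work, because \eqref{BB2} can genuinely fail at your chosen $A$ for every $\eta$: take $(u_0,u_1)=(Z_\ell,0)$ for $r>A_0$ and anything admissible inside. Then for every $A\geq A_0$, finite speed of propagation gives $\tilde u=Z_\ell$ on $\{r\geq A+|t|\}$, so the exterior density is $\approx\int_{A+|t|}^{\infty}|Z_\ell'|^mr^m\,dr\approx (A+|t|)^{1-m}\to 0$ in both time directions. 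The proposition survives only because $A$ may be chosen depending on $u$; accordingly the paper's proof negates the conclusion \emph{for all} $A>0$ simultaneously (and even at shifted initial times), and uses this at a whole family of radii. Second, your rigidity claim in Step 2 is false as stated: \eqref{P6} controls $\int_R^{\infty}|\partial_r(rv_0)|^m+|rv_1|^m\,dr$, which by Proposition \ref{P:prelim}\,\eqref{P4} differs from the exterior $\LLL^m$-density by the boundary term $R|u_0(R)|^m$. The vanishing of exterior energy therefore does \emph{not} force zero $\LLL^m$-density on $\{r>A\}$; it leaves exactly the tail $u_0\sim\ell/r$ (for which $\partial_r(ru_0)\equiv 0$) untouched. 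Extracting that tail --- proving $ru_0(r)\to\ell\neq 0$ with the rate $r^{2-2m}$ via a doubling argument along radii $2^kA_0$, then showing $(u_0-Z_\ell,u_1)$ has compact essential support --- is the actual content of the proof, and you assert its conclusion ("$\tilde u$ is stationary in the exterior cone") in one sentence without any mechanism. The Bessel-type inequality \eqref{Pythagorean} you invoke as a substitute for orthogonal projections plays no role here; the paper's argument is a direct ODE-type bootstrap on the data.

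Even granting your Step 2, Step 3 does not close. Knowing $u_0=Z_\ell$ only on $\{r>A\}$ with $\ell\neq 0$ is not contradictory: $Z_\ell\notin L^{3m}(\RR^3)$ is a global statement (its failure of integrability is not located in $\{r>A\}$, where $Z_\ell\sim\ell/r$ is perfectly in $L^{3m}$), and in the defocusing case the singular radius $R_\ell$ is a fixed number which may well satisfy $R_\ell<A$, so no "scaling/translation" puts it inside $\{r\geq A+|t|\}$. One must first propagate the identity $u_0=Z_\ell$ down to all $r>R_\ell$ (the paper's Step 5, which re-runs the whole argument from shifted times $t_0$ and truncations $A>|t_0|$, using finite speed of propagation), and only then does Proposition \ref{P:stat} yield the contradiction with $(u_0,u_1)\in\LLL^m$. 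A smaller but real issue: \eqref{P6} holds for all $t\geq 0$ \emph{or} for all $t\leq 0$, with no choice of which, so "applying \eqref{P6} to $v^+$ in the forward direction and to $v^-$ in the backward direction" is not available; the paper instead applies it to the single linear flow of the truncated data and uses the vanishing hypothesis in whichever direction \eqref{P6} provides.
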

The proof of Proposition \ref{P:BB1} is very close to the proofs of the analogous propositions in \cite{DuKeMe12c} and \cite{DuRoy15P}. We give a sketch of proof for the sake of completeness.
\subsection{Sketch of proof of Proposition \ref{P:BB1}}
We argue by contradiction, assuming that for all $A>0$ the solution $\tilde{u}$ of \eqref{truncated_eq} with initial data $\TTT_A(u_0,u_1)$ is not a scattering solution, or is scattering and satisfies
\begin{equation}
 \label{BB2'}
 \liminf_{t\to\pm\infty} \int_{A+|t|} |\partial_{t,r} \tilde{u}(t,r)|^mr^m\,dr=0.
\end{equation} 
We let 
$$v(r)=ru(r),\quad v_0(r)=ru_0(r),\quad v_1(r)=ru_1(r).$$
\EMPH{Step 1} In this step we prove that there exists $\eps_0>0$ such that, if $A>0$ is such that 
\begin{equation}
 \label{BB3}
 \int_{A}^{+\infty}\left(|\partial_ru_0|^m+|u_1|^m\right)r^m\,dr =\eps\leq \eps_0,
\end{equation} 
then
\begin{gather}
\label{BB4}
\int_A^{+\infty} |\partial_r v_0|^m+|v_1|^m\,dr\leq \frac{C}{A^{(2m+1)(m-1)}}|v_0(A)|^{m(2m+1)},\\
\label{BB5}
\forall B\in [A,2A], \quad \left|v_0(B)-v_0(A)\right|\leq CA^{2-2m} |v_0(A)|^{2m+1}\leq C\eps^2 |v_0(A)|.
\end{gather}
We first assume \eqref{BB4} and prove \eqref{BB5}. By H\"older inequality and \eqref{BB4} we have
\begin{multline}
\label{BB4'}
 \left|v_0(B)-v_0(A)\right|\leq\int_{A}^{2A} |\partial_r v_0(r)|\,dr \leq A^{\frac{m-1}{m}}\left(\int_A^{2A}|\partial_r v_0|^m\,dr\right)^{\frac{1}{m}}\\
 \leq CA^{2-2m} |v_0(A)|^{2m+1}.
\end{multline}
Furthermore, by \eqref{BB3} and \eqref{P4} in Proposition \ref{P:prelim},
$$\frac{1}{A^{m-1}}|v_0(A)|^m=A|u_0(A)|^m\lesssim \eps,$$
which yields
$$ |v_0(A)|^{2m}\lesssim \eps^2A^{2m-2}.$$
Combining with \eqref{BB4'}, we obtain the second inequality of \eqref{BB5}.

We next prove \eqref{BB4}. Let
$$(\tu_0,\tu_1)=\TTT_A(u_0,u_1).$$

Let $\tu$ (respectively $\tu_{\rm L}$) be the solution of the nonlinear wave equation \eqref{NLW} (respectively the linear wave equation \eqref{LW}) with initial data $(\tu_0,\tu_1)$. By the small data theory, $\tu$ is global and 
\begin{equation}
 \label{BB6}
 \sup_{t\in \R} \left\|\vec{\tu}(t)-\vec{\tu}_{\rm L}(t)\right\|_{\LLL^m}\leq C\eps^{2m+1}.
\end{equation} 
Using the exterior energy property \eqref{P6} in Proposition \ref{P:prelimLW}, we have that the following holds for all $t\geq 0$ or for all $t\leq 0$:
\begin{multline*}
\int_A^{+\infty} \left(|\partial_r(v_0)|^m+|v_1|^m  \right)\,dr\leq C\int_{A+|t|}^{+\infty} \left|\partial_{r,t}(r\tilde{u}_{\rm L})(t,r) \right|^m\,dr \\
\leq C\int_{A+|t|}^{+\infty} \left|\partial_{r,t}\tilde{u}_{\rm L}(t,r) \right|^mr^m\,dr
\end{multline*}
Using \eqref{BB6}, we obtain that the following holds for all $t\geq 0$ or for all $t\leq 0$:
\begin{equation}
 \label{BB7}
 \int_A^{+\infty} \left(|\partial_r(v_0)|^m+|v_1|^m  \right)\,dr\leq C\left(\int_{A+|t|}^{+\infty} |\partial_{r,t}\tilde{u}(t,r)|^m\,dr+\eps^{(2m+1)m}\right).
\end{equation}
Using \eqref{BB2'} and the definition \eqref{BB3} of $\eps$, and letting $t\to+\infty$ or $t\to-\infty$, we obtain
$$\frac{1}{C}\int_A^{+\infty} \left(|\partial_rv_0|^m+|v_1|^m\right)dr\leq \left( \int_A^{+\infty}\left( |\partial_ru_0|^m+|u_1|^m \right)r^m\,dr  \right)^{2m+1}.$$
By \eqref{P4} in Proposition \ref{P:prelim}, and since $A |u_0(A)|^m=\frac{1}{A^{m-1}} |v_0(A)|^m$,
$$\int_A^{+\infty}\left( |\partial_r v_0|^m+|v_1|^m \right)\,dr\leq C\left( \int_{A}^{+\infty} (|\partial_r v_0|^m +|v_1|^m)\,dr+\frac{1}{A^{m-1}} |v_0(A)|^{m} \right)^{2m+1}.$$
Since 
$\int_A^{+\infty}\left( |\partial_rv_0|^m+|v_1|^m \right)\,dr$
is small, we deduce \eqref{BB4}.

\EMPH{Step 2} We prove that there exists $\ell \in \R\setminus 0$ such that 
\begin{equation}
 \label{BB8}
 \lim_{r\to\infty} v_0(r)=\ell,
\end{equation} 
and that there exists a constant $M>0$ (depending on $u$) such that 
\begin{equation}
 \label{BB9}
 |v_0(r)-\ell|\leq \frac{M}{r^{2m-2}}
\end{equation} 
for large $r$.

Let $\eps>0$ and fix $A_0$ such that 
\begin{equation}
 \label{cond_A0}
 \int_{A_0}^{+\infty} \left( |\partial_r u_0|^m+|u_1|^m \right)r^m\, dr= \eps\leq \eps_0,
\end{equation} 
where $\eps_0$ is given by Step 1. By \eqref{BB5}, 
$$\forall k\geq 0,\quad \left|v_0(2^{k+1}A_0)\right|\leq \left( 1+C\eps^2 \right)\left( |v_0(2^kA_0)| \right).$$
Hence, by a straightforward induction,
$$\forall k\geq 0,\quad \left|v_0\left( 2^{k+1}A_0 \right)\right|\leq \left( 1+C\eps^2 \right)^k\left|v_0\left( A_0 \right)\right|.$$
Using \eqref{BB5} again, we deduce
\begin{equation}
 \label{BB10}
 \left|v_0\left(2^{k+1}A_0\right)-v_0\left(2^kA_0  \right)\right|\leq C\left( 2^k A_0\right)^{2-2m} (1+C\eps^2)^{k(2m+1)} |v_0(A_0)|^{2m+1}.
\end{equation} 
Choosing $\eps$ small enough (so that $2^{2-2m}(1+C\eps^2)^{2m+1}<1$), we see that 
$$\sum_{k\geq 1}\left|v_0\left(2^{k+1}A_0\right)-v_0\left(2^kA_0  \right)\right|<\infty,$$ and thus that $v_0(2^kA_0)$ has a limit $\ell$ as $k\to +\infty$. Using \eqref{BB5} again, we deduce 
\begin{equation*}
 \lim_{r\to\infty} |v_0(r)|=\ell.
\end{equation*} 
Summing \eqref{BB10} over all $k\geq 0$, we deduce, using that $v_0$ is bounded, that there exists a constant $M>0$, such that  $|v_0(A_0)-\ell|\leq MA_0^{2-2m}$ for $A_0$ large enough. This yields \eqref{BB9}. 

It remains to prove that $\ell\neq 0$. We argue by contradiction. By \eqref{BB9}, if $\ell=0$, then 
$$|v_0(r)|\leq \frac{M}{r^{2m-2}}.$$
On the other hand, using \eqref{BB5} and an easy induction argument, we obtain that for all $\eps>0$, for all $A_0$ satisfying \eqref{cond_A0},
$$|v_0(2^kA_0)|\geq (1-C\eps^2)^k|v_0(A_0)|.$$
Combining with the previous bound, we obtain 
$$(1-C\eps^2)^k|v_0(A_0)|\leq \frac{M}{(2^kA_0)^{2m-2}},$$
a contradiction if $\eps$ is chosen small enough unless $v_0(A_0)=0$. Using \eqref{BB4}, we see that this would imply $v_0(r)=0$ and $v_1(r)=0$ for almost all $r\geq A_0$. Since this is true for any $A_0$ such that \eqref{cond_A0} holds, an obvious bootstrap argument proves that $(v_0,v_1)=(0,0)$ almost everywhere, contradicting our assumption.

\EMPH{Step 3} Recall from Proposition \ref{P:stat} the definition of $R_{\ell}$. Let, for $r>R_{\ell}$, 
$$(g_0,g_1)(r):=(u_0(r)-Z_{\ell}(r),u_1(r))\quad (h_0,h_1)(r)=r(g_0(r),g_1(r)).$$
If $\eps>0$, we fix $A_{\eps}>R_{\ell}$  such that 
\begin{equation}
\label{BB12}
 \int_{A_{\eps}}^{+\infty} |\partial_r Z_{\ell}|^mr^m\,dr +\|Z_{\ell}\|_{S(\{r\geq A_{\eps}+|t|\})}^m\leq \frac{\eps^m}{C},
\end{equation}
In this step, we prove that for all $\eps>0$, if $A>A_{\eps}$ satisfies
\begin{equation}
 \label{BB11} \int_{A}^{+\infty} \left( |\partial_r g_0|^m+|g_1|^m \right)r^m\,dr <\frac{\eps^m}{C}.
\end{equation}
Then
\begin{equation}
 \label{BB10'}\int_{A}^{+\infty} \left|\partial_r h_0\right|^m+|h_1|^m\,dr\leq \frac{\eps}{A^{m-1}}|h_0(A)|^m.
\end{equation} 
Fix $A>A_{\eps}$, let $(\tu_0,\tu_1)= \TTT_A(u_0,u_1)$, and 
$\tu$ the solution of the nonlinear wave equation \eqref{NLW} with initial data $(\tu_0,\tu_1)$ at $t=0$. Note that by \eqref{BB11} and small data theory, $\tu$ is global and scatters in both time directions. Note also that by our assumption, $\tu$ satisfies \eqref{BB2'}.
%

Define $\tg$  as the solution to the following equation
\begin{equation}
 \label{BB14}
 \left\{
 \begin{aligned}
 \partial_t^2\tilde{g}-\Delta \tilde{g}&=\indic_{\left\{r\geq A+|t|\right\}}\left(|\tu|^{2m}\tu-|Z_{\ell}|^{2m}Z_{\ell}\right)\\
 \vec{\tilde{g}}_{\restriction t=0}&=\TTT_A(g_0,g_1),
\end{aligned}
\right.
\end{equation} 
and $\tilde{g}_{\rm L}$ the solution of the free wave equation with the same initial data. Notice that $(\partial_t^2-\Delta)(\tilde{u}-Z_{\ell})=(\partial_t^2-\Delta)\tilde{g}$ for $r>A+|t|$ and $\vec{\tilde{g}}(0,r)=(\tilde{u}_0-Z_{\ell},\tilde{u}_1)(r)$ for $r>A$. Thus, by finite speed of propagation, $\tilde{g}=\tilde{u}-Z_{\ell}$ for $r>A+|t|$, and we can rewrite the first equation in \eqref{BB14}:
\begin{equation}
 \label{BB14'}
 \partial_t^2\tilde{g}-\Delta \tilde{g}=\indic_{\left\{r\geq A+|t|\right\}}\left(|Z_{\ell}+\tilde{g}|^{2m}(Z_{\ell}+\tilde{g})-|Z_{\ell}|^{2m}Z_{\ell}\right).
 \end{equation}
Using \eqref{BB14'}, Strichartz estimates and H\"older inequality, we see that for all time-interval $I$ containing $0$:
\begin{equation*}
\left\|\tilde{g}-\tilde{g}_{\rm L}\right\|_{S(I)}+  \sup_{t\in I_{\max}(u)}\left\|\vec{\tg}(t)-\vec{\tg}_{\rm L}(t)\right\|_{\LLL^m}\leq C\left(\|Z_{\ell}\|^{2m}_{S(\{r\geq A+|t|\})}\|\tilde{g}\|_{S(I)}+\|\tilde{g}\|_{S(I)}^{2m+1}\right).
\end{equation*}
By \eqref{BB11}, \eqref{BB12} and a straightforward bootstrap argument, we deduce that for all interval $I$ with $0\in I$, 
\begin{equation*}
 \|\tilde{g}\|_{S(I)}\leq C\|\tilde{g}_{\rm L}\|_{S(I)}\leq C\left\|\TTT_A(g_0,g_1)\right\|_{\LLL^m}\leq C\eps,
\end{equation*}
and 
\begin{equation}
\label{g_approx_gL}
 \sup_{t\in \R}\left\|\vec{\tilde{g}}(t)-\vec{\tilde{g}}_{\rm L}(t)\right\|_{\LLL^m}\leq C\eps^{2m}\left\|\TTT_A(g_0,g_1)\right\|_{\LLL^m}.
\end{equation}
By the exterior energy property \eqref{P6} in Proposition \ref{P:prelimLW}, the following holds for all $t\geq 0$ or for all $t\leq 0$:
\begin{multline*}
 \label{BB15}
 \int_A^{+\infty} \left( |h_0|^m+|h_1|^m \right)\,dr\leq C\int_{A+|t|}^{+\infty} |\partial_{t,r}\tilde{g}_{\rm L}|^mr^m\,dr 
 \\ \leq C\left(\left( \eps^{2m}\|\TTT_A(g_0,g_1)\|_{\LLL^m} \right)^m+\int_{A+|t|}^{+\infty} |\partial_{t,r}\tilde{g}|^mr^m\,dr\right),
\end{multline*} 
where at the last line we have used \eqref{g_approx_gL}.

Letting $t\to \pm \infty$ and using \eqref{BB2'}, we deduce
$$\int_{A}^{+\infty} |\partial_{r}h_0|^m+|h_1|^m\,dr\leq C\eps^{2m^2} \int_A^{+\infty} \left(|\partial_rg_0|^m+|g_1|^m\right)r^m\,dr.$$
The desired estimate \eqref{BB10'} follows, taking $\eps$ small and using \eqref{P4} in Proposition \ref{P:prelim}.

\EMPH{Step 4} Fix a small $\eps>0$ and let $A_{\eps}$ be as in Step 3, i.e. such that \eqref{BB12} holds.
In this step, we prove that $r\leq A_{\eps}$ on $\esssupp (u_0-Z_{\ell},u_1)$. 

Indeed, if not, we obtain from \eqref{BB10'} and that there exists $A>A_{\eps}$ such that $h_0(A)\neq 0$. 
Using a similar argument as in Step 1, we deduce from \eqref{BB10'} that for all $A\geq A_{\eps}$ such that \eqref{BB11} holds,
\begin{equation}
\label{BB16} 
\forall B\in [A,2A]\quad \left|h_0(A)-h_0(B)\right|\leq C\eps|h_0(A)|.
\end{equation} 
If $\esssupp(u_0-Z_{\ell},u_1)$ is not bounded, we deduce by \eqref{BB10'} that $h_0(A)\neq 0$ for all large $A>0$. If $\eps>0$ is small enough, we deduce using \eqref{BB16} that 
$$\lim_{r\to+\infty} r^{\alpha} h_0(r)=+\infty,$$
where $\alpha \in (0,2m-2)$ is fixed.
Since 
$$v_0(r)-\ell=h_0(r)-\ell+rZ_{\ell},$$
this contradicts \eqref{BB9} in Step 2 and the asymptotic estimate \eqref{Zell2} of $Z_{\ell}$.

We have proved that $\esssupp(u_0-Z_{\ell},u_1)$ is bounded. Using \eqref{BB10'}, \eqref{BB16} and a straightforward bootstrap argument, we deduce that $r\leq A_{\eps}$ on the support of $\esssupp (u_0-Z_{\ell},u_1)$. 

\EMPH{Step 5} Fix  a small $\eps>0$. We have proved in Step 4 that $(u_0,u_1)(r)=(Z_{\ell}(r),0)$ for almost every $r\geq A_{\eps}$, where $A_{\eps}$ depends only on $\ell$. We will prove $(u_0,u_1)(r)=(Z_{\ell}(r),0)$ for $r>R_{\ell}$, a contradiction with Proposition \ref{P:stat} since $(u_0,u_1)\in \LLL^m$. 

We argue by contradiction, assuming that there exists $B>R_{\ell}$ such that $B\in \esssupp(u_0-Z_{\ell},u_1)$. Using a similar argument as in Step 3, but on small time intervals (see e.g. the proof of Proposition 2.2 (a), \S 2.2.1 in \cite{DuKeMe13}), we prove that the following holds for all $t\geq 0$ or for all $t\leq 0$:
\begin{equation}
 \label{prop_supp}
 B+|t|\in \esssupp\Big((u(t)-Z_{\ell},\partial_tu(t))\Big).
\end{equation} 
Choose $t_0$ such that 
\begin{equation}
\label{choice_t0}
B+|t_0|>A_{\eps} \text{ on }\esssupp\Big((u(t_0)-Z_{\ell},\partial_tu(t_0))\Big).
\end{equation} 
It is easy to see that $u$ satisfies the following:
for all $A>|t_0|$ the solution $\tilde{u}$ of
\begin{equation*}
\partial_t^2\tilde{u}-\Delta \tilde{u}=\iota |\tu|^{2m}\tu \indic_{\{r\geq A+|t-t_0|\}}
\end{equation*}
with initial data $\TTT_A(\vec{u}(t_0))$ at $t=t_0$ is not a scattering solution, or is scattering and satisfies
\begin{equation*}
 \liminf_{t\to\pm\infty} \int_{A+|t-t_0|}^{+\infty} |\partial_{t,r} \tilde{u}(t,r)|^mr^m\,dr=0.
\end{equation*} 
We can then go through Step 1, \ldots, Step 4 above, but with initial data at $t=t_0$, and restricting to $r>|t_0|$. Note that by finite speed of propagation, the limit $\ell$ obtained in Step 2 for $t=0$ and for $t=t_0$ is the same, i.e.
$$\lim_{r\to +\infty} r u(t_0,r)=\lim_{r\to+\infty} ru(0,r).$$
By the conclusion of Step 4, we obtain that $r<\max(A_{\eps},t_0)$ on $\esssupp(\vec{u}(t_0)-Z_{\ell})$, contradicting \eqref{choice_t0}. The proof is complete.

\qed

\section{Dispersive term}
\label{S:dispersive}
This section concerns the existence of a ``dispersive'' component for a solution $u$ of \eqref{NLW} that remains bounded in $\LLL^m$ along a sequence of times. This component is the strong limit of $\vec{u}(t)$, in $\LLL^m$, outside the origin in the finite time blow-up case (see Subsection \ref{SS:regular}) , and a solution of the linear wave equation in the global case (see Subsection \ref{SS:radiation}).
\subsection{Regular part in the finite time blow-up case}
\label{SS:regular}
\begin{proposition}
 \label{P:regular}
Let $u$ be a radial solution of \eqref{NLW}, \eqref{ID}. Assume 
 $$T_+(u)<\infty,\quad \liminf_{t\to T_+(u)} \|\vec{u}(t)\|_{\LLL^m}<\infty.$$
Then there exists a solution $v$ of \eqref{NLW}, defined in a neighborhood of $t=T_+$, such that for all $t$ in $I_{\max}(u)\cap I_{\max}(v)$,
$$ \forall r>T_+-t,\quad \vec{u}(t,r)=\vec{v}(t,r).$$
\end{proposition}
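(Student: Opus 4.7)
The plan is to identify $\vec{v}(T_+)$ as a weak $\LLL^m$-limit of $\vec{u}(t_n)$ along a sequence $t_n \nearrow T_+$ where the critical norm stays bounded, to apply local well-posedness to obtain $v$ in a neighborhood of $T_+$, and then to use finite speed of propagation together with the exterior approximation of Lemma \ref{L:approx} to match $u$ and $v$ outside the past light cone of $(T_+,0)$.

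First, I would select $t_n \nearrow T_+$ with $\sup_n \|\vec{u}(t_n)\|_{\LLL^m} < \infty$, and extract a subsequence so that $\vec{u}(t_n) \rightharpoonup (v_0, v_1)$ weakly in $\LLL^m$ (using reflexivity of $\LLL^m$, which holds since $m > 1$). Applying Theorem \ref{T:LWP} to the Cauchy problem with data $(v_0, v_1)$ at $t = T_+$ yields a solution $v$ defined on an open interval $I \ni T_+$. By finite speed of propagation, it suffices to exhibit one $t_0 \in I$ with $t_0 < T_+$ at which $\vec{u}(t_0, r) = \vec{v}(t_0, r)$ for every $r > T_+ - t_0$, since both $u$ and $v$ then satisfy \eqref{NLW} on the exterior region $\{r > T_+ - t\}$ and agreement there is propagated by the equation to all times in $I_{\max}(u)\cap I_{\max}(v)$.

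To establish the agreement at time $t_n$, I would apply Theorem \ref{T:profiles} to the linear waves $u_{{\rm L},n}(\tau,x) := S_{\rm L}(\tau)\vec{u}(t_n)(x)$, obtaining profiles $U^j_{\rm L}$ with parameters $(\lambda_{j,n}, t_{j,n})$. The weak convergence from Step~1 lets me arrange, after inserting a trivial profile if necessary, that $\lambda_{1,n} = 1$, $t_{1,n} = 0$ and $\vec{U}^1_{\rm L}(0) = (v_0, v_1)$; pseudo-orthogonality \eqref{P-O} then forces every other profile to satisfy $\lambda_{j,n} \to 0$ or $|t_{j,n}| \to \infty$. Setting $U^1 := v$, I would apply Lemma \ref{L:approx} with $j_0 = 1$ on the slab $[t_n, T_+ + \delta]$ and truncation radius $A$: hypothesis \eqref{I:j0} holds because, for $A$ large enough, $(v_0, v_1)|_{\{r>A\}}$ has small $\LLL^m$-norm so that the exterior small-data theory yields a global nonlinear extension of $v$ outside the cone $\{r > A + |t-T_+|\}$ with finite $S$-norm; hypothesis \eqref{I:JJJ0} is automatic since $\lambda_{j,n}\to 0$ or $|t_{j,n}|\to\infty$ for $j\geq 2$. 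The conclusion of Lemma \ref{L:approx}, combined with Proposition \ref{P:exterior_profiles} applied to the residual $w_n^J$, then yields $\vec{u}(t_n, r) \to \vec{v}(T_+, r)$ in $\LLL^m(\{r > A\})$ as $n\to\infty$ and $J\to\infty$. Repeating the argument with smaller and smaller $A$ gives the desired identification on $\{r > T_+ - t_0\}$.

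The main obstacle is to verify that the concentrated profiles $U^j_{{\rm L},n}$, $j \geq 2$, cannot leak across the light cone: concretely, one must show that for each such $j$ and each fixed $A > 0$,
\[
\int_{A + |t - t_n|}^{+\infty} \bigl|r\,\partial_{t,r} U^j_{{\rm L},n}(t,r)\bigr|^m\,dr \;\xrightarrow[n \to \infty]{}\; 0
\]
uniformly in $t \in [t_n, T_+ + \delta]$. This follows from the change of variable $r = \lambda_{j,n} s$ together with the $\LLL^m$-integrability of $\vec{U}^j_{\rm L}$, using the dichotomy $\lambda_{j,n}\to 0$ or $|t_{j,n}|/\lambda_{j,n}\to \infty$; the analogous control on the residual $w_n^J$ is provided by Proposition \ref{P:exterior_profiles} and the vanishing property \eqref{dispersive}. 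Once this bookkeeping is in place, the remainder of the argument is a standard application of the machinery built in Sections \ref{S:LWP}--\ref{S:profiles}.
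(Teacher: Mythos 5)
Your overall strategy --- a weak limit of $\vec u(t_n)$ in the reflexive space $\LLL^m$, the nonlinear solution $v$ with that data posed at $t=T_+$, and a profile-decomposition/exterior-approximation argument to identify $u$ and $v$ outside the light cone --- is the standard one; note that the paper itself omits this proof and refers to Subsection~6.3 of \cite{DuRoy15P}, which follows essentially this route. But there are genuine gaps. The first is the reduction ``it suffices to exhibit one $t_0<T_+$ at which $\vec u(t_0,r)=\vec v(t_0,r)$ for $r>T_+-t_0$'': finite speed of propagation propagates agreement on $\{r>T_+-t_0\}$ forward only to $\{r>T_+-t_0+(t-t_0)\}=\{r>T_++t-2t_0\}$, which for $t\in(t_0,T_+)$ is strictly \emph{smaller} than the required region $\{r>T_+-t\}$ (the target region grows as $t\uparrow T_+$, faster than the domain of determinacy, so information can enter it from $\{r<T_+-t_0\}$). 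Moreover your argument never produces exact equality at any time: it yields convergence $\vec u(t_n)\to\vec v(T_+)$ in $\LLL^m(\{r>A\})$ along the sequence. The missing bridge is to apply the exterior perturbation Lemma~\ref{L:ELTPT} backward in time from $t=t_n$ on $\{r>\eps+|t-t_n|\}$ to get $\sup_{t}\int_{\eps+|t-t_n|}^{\infty}|r\partial_{t,r}(u-v)(t,r)|^m\,dr=o_n(1)$; since at each fixed $t$ the left-hand side is independent of $n$, it vanishes, and letting $\eps\to0$ gives the identity on $\{r>T_+-t\}$.

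The second gap is the bookkeeping of the profiles $j\geq2$, which is where the real work of this proposition lies. Pseudo-orthogonality \eqref{P-O} against $(\lambda_{1,n},t_{1,n})=(1,0)$ allows, besides $\lambda_{j,n}\to0$ with $t_{j,n}\to0$, also (i) $\lambda_{j,n}\to\infty$ with $t_{j,n}=0$, so hypothesis \eqref{I:JJJ0} of Lemma~\ref{L:approx} is \emph{not} automatic; and (ii) $\lambda_{j,n}\to0$ with $t_{j,n}\to c\neq0$, i.e.\ a wave concentrating on the sphere $\{r=|c|\}$ at time $t_n$, for which $\int_{A+|t-t_n|}^{\infty}|r\partial_{t,r}U^j_{{\rm L},n}|^m\,dr$ does \emph{not} tend to $0$ when $|c|>A$, so your claimed uniform exterior vanishing fails. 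Neither case is disposed of by the change of variables you invoke: (i) is excluded by the spatial tightness \eqref{FSPSDT} (finite speed of propagation plus small-data theory at spatial infinity, using $T_+<\infty$), and (ii) must be excluded by transporting the decomposition to a fixed earlier time $T<T_+$ and using that $\vec u(T)-\vec v(T)$ is a fixed element of $\LLL^m$, whereas such a profile would carry a fixed amount of norm concentrating on moving spheres. A smaller point: for small $A$, hypothesis \eqref{I:j0} (a \emph{global} exterior solution with finite $S$-norm) is not provided by small-data theory; you only need Lemma~\ref{L:approx} on a compact time interval around $T_+$ contained in $I_{\max}(v)$, but then the lemma must be invoked in that localized form.
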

We omit the proof (see Subsection 6.3 in \cite{DuRoy15P} for a very close proof).
\subsection{Extraction of the radiation term in the global case}
\label{SS:radiation}
We prove here:
\begin{proposition}
\label{P:radiation}
 Let $u$ be a radial solution of \eqref{NLW}, \eqref{ID}. Assume 
 $$T_+(u)=+\infty,\quad \liminf_{t\to +\infty} \|\vec{u}(t)\|_{\LLL^m}<\infty.$$
 Then there exists a solution $v_{\rm L}$ of the free wave equation \eqref{LW} such that for all $A\in \R$,
\begin{equation}
\label{radiation_A}
 \lim_{t\to+\infty} \int_{|x|\geq A+|t|}\left( |\partial_t(u-v_{\rm L})|^m+|\partial_r(u-v_{\rm L})|^m \right)r^m\,dr=0.
\end{equation} 
\end{proposition}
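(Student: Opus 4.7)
The plan is to construct the radiation $v_{\rm L}$ as the linear evolution of a weak $\LLL^m$-limit of $S_{\rm L}(-t_n)\vec u(t_n)$ along a suitable subsequence $t_n\to +\infty$, to deduce the exterior vanishing \eqref{radiation_A} first along $t_n$ via the linear profile decomposition, and finally to propagate to all $t\to +\infty$ by finite speed of propagation together with the exterior perturbation theory of Lemma~\ref{L:ELTPT}.

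By the $\liminf$ hypothesis, I extract $t_n\to +\infty$ with $\|\vec u(t_n)\|_{\LLL^m}\leq M$. Setting $\vec W_n := S_{\rm L}(-t_n)\vec u(t_n)$, the near-conservation \eqref{11} yields $\|\vec W_n\|_{\LLL^m}\leq CM$. Since $\LLL^m$ is reflexive for $m>1$, I extract a subsequence $\vec W_n \rightharpoonup \vec W_\infty$ weakly and define $v_{\rm L}(t) := S_{\rm L}(t)\vec W_\infty$. Then $\vec u(t_n)-\vec v_{\rm L}(t_n) = S_{\rm L}(t_n)(\vec W_n - \vec W_\infty)$ is the linear evolution of a sequence weakly tending to $0$ in $\LLL^m$. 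Applying Theorem~\ref{T:profiles} to $(\vec W_n - \vec W_\infty)_n$, the vanishing weak limit forces each profile $V^j_{\rm L}$ of its decomposition to have parameters $(\lambda_{j,n},\tau_{j,n})$ satisfying at least one of $\lambda_{j,n}\to 0$, $\lambda_{j,n}\to +\infty$, or $|\tau_{j,n}|/\lambda_{j,n}\to +\infty$. The change of variable $\rho = r/\lambda_{j,n}$, $s_{j,n}=(t_n-\tau_{j,n})/\lambda_{j,n}$ gives
$$\int_{A+t_n}^{+\infty} |r\,\partial_{t,r} V^j_{{\rm L},n}(t_n,r)|^m\,dr \;=\; \int_{(A+t_n)/\lambda_{j,n}}^{+\infty} |\rho\,\partial_{t,r} V^j_{\rm L}(s_{j,n},\rho)|^m\,d\rho,$$
and a case analysis combining pseudo-orthogonality with the finite $\LLL^m$-energy of $V^j_{\rm L}$ and the dispersive decay of linear waves in scale-invariant exterior domains shows that this tends to $0$ as $n\to\infty$ for each fixed $j$. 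The remainder $\tilde w_n^J$ is controlled by combining the dispersive vanishing \eqref{dispersive} with the exterior energy estimate~(P6) of Proposition~\ref{P:prelimLW} and the Bessel-type inequality \eqref{Pythagorean}, after letting $J\to +\infty$; a Minkowski-type sum then yields
$$\lim_{n\to\infty}\int_{A+t_n}^{+\infty}|r\,\partial_{t,r}(u-v_{\rm L})(t_n,r)|^m\,dr = 0\qquad\text{for any }A\in\R.$$

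To propagate from $t_n$ to all $t\to +\infty$, I write $w = u-v_{\rm L}$, which solves $(\partial_t^2-\Delta)w = \iota|w+v_{\rm L}|^{2m}(w+v_{\rm L})$, and apply Lemma~\ref{L:ELTPT} on the forward exterior cone $\{t\geq t_n,\,r\geq A+(t-t_n)\}$ with $\tilde u = v_{\rm L}$ and error $e = -\iota|v_{\rm L}|^{2m}v_{\rm L}$. For $n$ large, the tail $\|v_{\rm L}\|_{S(\{t\geq t_n,\,r\geq A+(t-t_n)\})}$ is small (since $v_{\rm L}$ is a linear $\LLL^m$-wave, its global $S$-norm is finite by Theorem~\ref{T:Strichartz}), which makes both the $S$-norm of $\tilde u$ and the $L^1_tL^m_x(r^m dr)$-norm of $e$ small on the cone; combined with the smallness of $\vec u(t_n)-\vec v_{\rm L}(t_n)$ on $\{r\geq A+t_n\}$ from the previous step, Lemma~\ref{L:ELTPT} yields the desired exterior vanishing for all $t\geq t_n$, and hence as $t\to +\infty$. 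The main obstacle lies in the vanishing-at-$t_n$ step: the non-Hilbertian nature of $\LLL^m$ only provides the weaker Bessel-type inequality \eqref{Pythagorean} rather than a Pythagorean expansion, so controlling the remainder $\tilde w_n^J$ requires a careful passage to the limit in $J$ together with the exterior energy estimate, and the sub-case of profiles whose scale $\lambda_{j,n}$ is comparable to $t_n$ demands an explicit asymptotic analysis of the corresponding linear waves.
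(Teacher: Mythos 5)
Your overall architecture --- define $v_{\rm L}$ as the linear flow of the weak limit of $S_{\rm L}(-t_n)\vec u(t_n)$, prove the exterior vanishing along the sequence $t_n$, then propagate to all $t$ by exterior perturbation theory --- is the same as the paper's, and your final propagation step is essentially sound (modulo replacing the data by its truncation $\TTT_{A+t_n}$ to convert energy smallness on $\{r\geq A+t_n\}$ into $S$-norm smallness on the exterior cone, and writing the cone as $\{r\geq A+t,\ t\geq t_n\}$). The gap is in the middle step, which you correctly flag as the main obstacle but whose proposed resolution does not close, for two reasons.

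First, it is not true that every profile of $(\vec W_n-\vec W_\infty)_n$ has vanishing exterior energy on $\{r\geq A+t_n\}$ at time $t_n$: a profile with $\tau_{j,n}=0$ and $\lambda_{j,n}/t_n\to+\infty$ converges weakly to $0$ in $\LLL^m$, yet its contribution $\int_{(A+t_n)/\lambda_{j,n}}^{+\infty}|\rho\,\partial_{t,r}V^j_{\rm L}(t_n/\lambda_{j,n},\rho)|^m\,d\rho$ tends to $\int_{0}^{+\infty}|\rho\,\partial_{t,r}V^j_{\rm L}(0,\rho)|^m\,d\rho\neq 0$. Excluding such profiles, and forcing those with $\lambda_{j,n}\approx t_n$ to have data essentially supported in the unit ball, requires the finite-speed-of-propagation bound \eqref{FSPSDT} on the \emph{nonlinear} solution $u$ (a consequence of $\vec u(0)\in\LLL^m$ and the small data theory), which your argument never invokes; weak convergence to zero, pseudo-orthogonality and bounded energy alone do not suffice. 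Second, and more structurally, your ``Minkowski-type sum'' is an upper bound for the exterior energy of $u-v_{\rm L}$ by the profile contributions plus the remainder, but the exterior $\LLL^m$-energy of $\tilde w_n^J$ on $\{r\geq A+t_n\}$ is \emph{not} controlled by its dispersive norm: \eqref{dispersive} kills the $S$-norm, not the energy (an outgoing remainder of order-one energy and tiny $S$-norm can place all of its energy in that region at time $t_n$), and property (3) of Proposition \ref{P:prelimLW} as well as Proposition \ref{P:exterior_profiles} are lower bounds, unusable in this direction. The paper circumvents both problems by arguing at the level of $S$-norms, where summation over profiles plus a small remainder is legitimate: Step 1 of its proof shows $\|S_{\rm L}(\cdot)\vec u(t_n)\|_{S(\{r\geq(1-\delta)t_n+t\})}<\eps_1$ using \eqref{FSPSDT}; Lemma \ref{L:FSP} then \emph{produces}, for each $A$, a radiation field $v_{\rm L}^A$ satisfying the exterior vanishing; and only afterwards is Proposition \ref{P:exterior_profiles} used --- as a lower bound, in the correct direction --- to identify every $v_{\rm L}^A$ with the weak-limit profile $U^1_{\rm L}$. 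Your middle step needs to be restructured along these lines.
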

The proof relies on the following lemma, which is a consequence of finite speed of propagation, Strichartz estimates and the small data theory. We omit the proof, which is an easy adaptation of the proofs of Claim 2.3 and 2.4 in \cite{DuKeMe16Pa} where the usual energy is replaced by the $\LLL^m$-energy:
\begin{lemma}
 \label{L:FSP}
 There exists $\eps_1>0$ with the following property.
 Let $u$ be a solution of \eqref{NLW}, \eqref{ID} such that $T_+(u)=+\infty$. Let $T\geq 0$ and $A\geq 0$. 
 Assume 
  $\left\|S_{\rm L}(\cdot-T)\vec{u}(T)\right\|_{S(\left\{|x|\geq A+t,\; t\geq T\right\})}=\eps'<\eps_1.$
  Then
  $\|u\|_{S(\left\{|x|\geq A+t,\; t\geq T\right\})}\leq 2\eps'$, and there exists a solution $v_{\rm L}$ of the linear wave equation 
 such that \eqref{radiation_A} holds.
\end{lemma}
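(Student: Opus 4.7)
The plan is to construct $v_{\rm L}$ as the linear evolution of a weak limit along a suitable subsequence of $t_n\to+\infty$, and then to deduce the radiation property by combining Lemma \ref{L:FSP} with the linear profile decomposition to establish the requisite Strichartz smallness in the exterior cone.

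First, I would fix a sequence $t_n\to+\infty$ along which $\|\vec{u}(t_n)\|_{\LLL^m}\leq M$, available by hypothesis. The space $\LLL^m$ is reflexive for $m>1$ (it is isomorphic to a weighted $L^m$ space on $(0,\infty)$), so after extraction, $S_{\rm L}(-t_n)\vec{u}(t_n)$ converges weakly to some $(v_0,v_1)\in\LLL^m$; set $v_{\rm L}(t):=S_{\rm L}(t)(v_0,v_1)$. Fix $A\in\R$. I would aim to apply Lemma \ref{L:FSP} to the time-translated solution $u_n(\tau):=u(\tau+t_n)$ with cone parameter $A$, starting time $0$. The hypothesis of that lemma reduces to showing
$$\|S_{\rm L}(\cdot)\vec{u}(t_n)\|_{S(\{|x|\geq A+\tau,\ \tau\geq 0\})}<\eps_1$$
for all sufficiently large $n$. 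Splitting $S_{\rm L}(\tau)\vec{u}(t_n)=\vec{v}_{\rm L}(\tau+t_n)+S_{\rm L}(\tau)\vec{\rho}_n$ with $\vec{\rho}_n:=\vec{u}(t_n)-\vec{v}_{\rm L}(t_n)$, the $v_{\rm L}$-contribution is bounded by $\|v_{\rm L}\|_{S([t_n,\infty)\times\R^3)}$, which vanishes as $n\to\infty$ since $v_{\rm L}\in S(\R)$ by Theorem \ref{T:Strichartz}. By construction $S_{\rm L}(-t_n)\vec{\rho}_n\rightharpoonup 0$ in $\LLL^m$, but this vanishing at the canonical scale does not, in view of Proposition \ref{P:wLn}, preclude concentration at other scales.

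To control the remainder, I would apply the profile decomposition of Theorem \ref{T:profiles} to the sequence $S_{\rm L}(\cdot-t_n)\vec{\rho}_n$. The vanishing weak limit forces the profile at parameters $(\lambda,t)=(1,0)$ to be zero, and pseudo-orthogonality \eqref{P-O} compels every other profile $U^j_{\rm L}$ to satisfy $\lambda_{j,n}\to 0$, $\lambda_{j,n}\to+\infty$, or $|t_{j,n}|/\lambda_{j,n}\to\infty$. The change of variables $x=\lambda_{j,n}y$, $\tau=\lambda_{j,n}\sigma+t_{j,n}$ transforms the exterior cone $\{|x|\geq A+\tau,\ \tau\geq 0\}$ into a set of the form $\{|y|\geq\sigma+D_n,\ \sigma\geq C_n\}$ with $C_n, D_n$ tending to $\pm\infty$ in each orthogonality case; by scale invariance of the $S$-norm and dominated convergence applied to the fixed density $|U^j_{\rm L}(\sigma,y)|^{(2m+1)m}y^m$, the contribution of each profile vanishes as $n\to\infty$, while the dispersive remainder $w_n^J$ is handled directly by \eqref{dispersive}. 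Lemma \ref{L:FSP} then yields a linear solution $v_{\rm L}^{(A)}$ satisfying \eqref{radiation_A}; comparing exterior energies along $t_n$ via Proposition \ref{P:prelimLW} \eqref{P6} forces $v_{\rm L}^{(A)}=v_{\rm L}$, as both satisfy the same exterior asymptotic scattering property on $\{|x|\geq A+t\}$.

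The main obstacle is the case $\lambda_{j,n}\to+\infty$ (large-scale dispersion), which does not automatically decouple from the exterior cone since the weighted $L^m$ mass of the data need not be small outside $|x|\geq A$. Here one must combine the scale orthogonality with the global Strichartz finiteness $U^j_{\rm L}\in S(\R)$ obtained from Theorem \ref{T:Strichartz}, using that the translated region in the $(\sigma,y)$ variables escapes the essential support of $U^j_{\rm L}$ in $S$-norm thanks to the translation orthogonality $|t_{j,n}|/\lambda_{j,n}\to\infty$, which must hold whenever the scale ratio is bounded. A secondary step is to upgrade convergence from the subsequence $(t_n)$ to all $t\to+\infty$, which follows from the monotonicity of the exterior $\LLL^m$ energy for the linear equation \eqref{P6} and the long-time perturbation lemma \ref{L:ELTPT}, applied to compare $\vec{u}(t)-\vec{v}_{\rm L}(t)$ with its free evolution from an arbitrary $t_n$.
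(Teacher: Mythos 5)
There is a fundamental problem: your argument is circular, and it proves the wrong statement. What you sketch is essentially a proof of Proposition \ref{P:radiation} (extraction of the radiation term for a solution bounded in $\LLL^m$ along a sequence $t_n\to+\infty$), and it invokes Lemma \ref{L:FSP} itself twice as the key tool (``apply Lemma \ref{L:FSP} to the time-translated solution'', ``Lemma \ref{L:FSP} then yields a linear solution $v_{\rm L}^{(A)}$''). But Lemma \ref{L:FSP} is precisely the statement you were asked to prove, so nothing in the proposal establishes it. Moreover your starting point, ``fix a sequence $t_n\to+\infty$ along which $\|\vec{u}(t_n)\|_{\LLL^m}\leq M$, available by hypothesis'', is not available: the hypotheses of Lemma \ref{L:FSP} are only that $T_+(u)=+\infty$ and that the free evolution of $\vec{u}(T)$ has small $S$-norm on the exterior region $\{|x|\geq A+t,\ t\geq T\}$; there is no assumption of boundedness of the $\LLL^m$ norm along a sequence of times. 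The profile decomposition, weak limits, and orthogonality analysis you develop belong to the proof of Proposition \ref{P:radiation} (where the paper indeed uses them to verify the smallness hypothesis of the lemma), not to the lemma itself.

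The actual content of Lemma \ref{L:FSP} is perturbative and is entirely missing from your proposal. The correct route is: (i) by finite speed of propagation, on $\{|x|\geq A+(t-T)_+ + ...\}$ the solution $u$ coincides with the solution $\tilde u$ of the equation with nonlinearity truncated to the exterior of the cone (as in \eqref{eq_tildeu} or \eqref{truncated_eq}); (ii) run the small-data fixed point/bootstrap in the exterior Strichartz norm, exactly as in Proposition \ref{prop:LWP} or Lemma \ref{L:ELTPT} with $\tilde u=0$, using the hypothesis $\eps'<\eps_1$ to absorb the nonlinear term; this gives $\|u\|_{S(\{|x|\geq A+t,\ t\geq T\})}\leq 2\eps'$; (iii) since the truncated nonlinearity then has finite $L^1_tL^m_x(r^m dr)$ norm (bounded by $C(2\eps')^{2m+1}$ via H\"older), the energy estimate \eqref{energy-est} and a Cauchy criterion show that $S_{\rm L}(-t)\vec{\tilde u}(t)$ converges strongly in $\LLL^m$ as $t\to+\infty$; defining $v_{\rm L}$ as the free evolution of this limit and using again finite speed of propagation and \eqref{P6} of Proposition \ref{P:prelimLW} restricted to $\{r\geq A+t\}$ yields \eqref{radiation_A}. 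This is the ``finite speed of propagation, Strichartz estimates and small data theory'' argument the paper alludes to (adapting Claims 2.3--2.4 of \cite{DuKeMe16Pa} to the $\LLL^m$-energy); none of these steps can be replaced by the profile-decomposition machinery you propose, which presupposes the lemma.
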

\begin{proof}[Proof of Proposition \ref{P:radiation}]
(see also Subsection 3.3 in \cite{DuKeMe13}).
\EMPH{Step 1} 
Let $t_n\to +\infty$ such that the sequence $(\vec{u}(t_n))_n$ is bounded in $\LLL^m$. In this step we prove that there exists $\delta>0$ such that for large $n$,
\begin{equation}
 \label{small_Snorm}
 \left\|S_{\rm L}(\cdot) \vec{u}(t_n)\right\|_{S\left(\left\{ |x|\geq (1-\delta)t_n +t,\; t\geq 0\right\}\right)}<\eps_1,
\end{equation} 
where $\eps_1$ is given by Lemma \ref{L:FSP}. We argue by contradiction, assuming (after extraction of subsequences) that there exists a sequence $\delta_n\to 0$ such that 
\begin{equation}
 \label{large_Snorm}
 \left\|S_{\rm L}(\cdot) \vec{u}(t_n)\right\|_{S\left(\left\{ |x|\geq (1-\delta_n)t_n +t,\; t\geq 0\right\}\right)}\geq \eps_1,
 \end{equation} 
 Extracting subsequences again, we can assume that the sequence $\left(\vec{u}(t_n)\right)_n$ has a profile decomposition with profiles $U^j_{\rm L}$ and parameters $(\lambda_{j,n},t_{j,n})_n$. Let $J$ be a large integer such that 
 $$ \left\|S_{\rm L}(\cdot)\left( \vec{u}(t_n)-\sum_{j=1}^J \vec{U}_{{\rm L},n}^j(0) \right)\right\|_{S(\R)}\leq \frac{\eps_1}{2}.$$
A contradiction will follow if we prove (possibly extracting subsequences in $n$), that for all $j\in \{1,\ldots,J\}$,
\begin{equation}
 \label{Snorm_Uj_0}
\lim_{n\to\infty} \left\|S_{\rm L}(\cdot)\vec{U}_{{\rm L},n}^j(0) \right\|_{S\left(\left\{ |x|\geq (1-\delta_n)t_n +t,\; t\geq 0\right\}  \right)}=0.
 \end{equation} 
 We have 
 \begin{equation*}
  \left\|S_{\rm L}(\cdot)\vec{U}_{{\rm L},n}^j(0) \right\|_{S\left(\left\{ r\geq (1-\delta_n)t_n +t,\; t\geq 0\right\}  \right)}=\left\|U_{\rm L}^j\right\|_{S(A_{j,n})},
 \end{equation*}
where 
$$A_{j,n}:=\left\{(t,r)\in \R\times (0,\infty)\;:\; t\geq -\frac{t_{j,n}}{\lambda_{j,n}}\text{ and } r\geq \frac{(1-\delta_n)t_n}{\lambda_{j,n}}+\left|t+\frac{t_{j,n}}{\lambda_{j,n}}\right|\right\}.$$ 
As a consequence, we see that we can extract subsequences so that the characteristic function of $A_{j,n}$ goes to $0$ pointwise unless $t_{j,n}/\lambda_{j,n}$ and $t_n/\lambda_{j,n}$ are bounded. Time translating the profile $U^j_{\rm L}$ and extracting again, we can assume:
$$ \lim_{n\to\infty} t_n/\lambda_{j,n}=\tau_0\in [0,\infty),\quad \forall n,\; t_{j,n}=0.$$
By finite speed of propagation and the small data theory,
\begin{equation}
\label{FSPSDT}
\lim_{A\to+\infty} \limsup_{n\to+\infty} \int_{|x|\geq t_n+A}\left|r \partial_{r,t} u(t_n)\right|^m\,dr=0. 
\end{equation} 
By Proposition \ref{P:exterior_profiles}, for all $A\in \RR$, we have that for large $n$,
\begin{align*}
\int_{t_n+A}^{+\infty} \left|r(\partial_{r,t}u(t_n))\right|^m\,dr &\geq \frac{1}{2} \int_{t_n+A}^{+\infty}\left|r(\partial_{r,t}U^j_{L,n}(0))\right|^m\,dr\\ 
&=\frac{1}{2} \int_{\frac{t_n+A}{\lambda_{j,n}}}^{+\infty}\left|r(\partial_{r,t}U^j_{\rm L}(0))\right|^m\,dr\\
&\underset{n\to\infty}{\longrightarrow}\frac 12\int_{\tau_0}^{+\infty}\left|r(\partial_{r,t}U^j_{\rm L}(0))\right|^m\,dr.
\end{align*} 
Combining with \eqref{FSPSDT}, we see that if $U^j_{\rm L}$ is not identically $0$, then $\tau_0$ is strictly positive, and we can rescale the profile $U^j_{\rm L}$ to assume $\tau_0=1$, and $\lambda_{j,n}=t_n$. Using \eqref{FSPSDT} we see that $\esssupp \vec{U}^j_{\rm L}(0)$ is included in the unit ball of $\R^3$, which implies
$$\left\|U_{\rm L}^j\right\|_{S(A_{j,n})}=\left\|U^j_{\rm L}\right\|_{S\left(\left\{t\geq 0,\; r\geq (1-\delta_n)+t\right\}\right)}\underset{n\to\infty}{\longrightarrow}0,$$
concluding the proof of \eqref{Snorm_Uj_0} in this case. Step 1 is complete.

\EMPH{Step 2} By Step 1 and Lemma \ref{L:FSP}, for all $A\in \R$, there exists a solution $v_{\rm L}^{A}$ of the free wave equation such that 
\begin{equation}
\label{CVvLA}
 \lim_{t\to+\infty} \int_{|x|\geq A+|t|}\left( |\partial_t(u-v_{\rm L}^A)|^m+|\partial_r(u-v_{\rm L}^A)|^m \right)r^m\,dr=0.
\end{equation} 
We consider the sequence $t_n\to+\infty$ of Step 1 and assume, extracting a subsequence if necessary, that $\vec{u}(t_n)$ has a profile decomposition $\left( U^j_{\rm L},\left( \lambda_{j,n},t_{j,n} \right)_n \right)_{j\geq 1}$. Reordering the profiles and rescaling and time translating $U_{\rm L}^1$ if necessary, we can assume, without loss of generality, that $t_{1,n}=t_n$ and $\lambda_{1,n}=1$ for all $n$. In other words, $\vec{U}_{\rm L}^1(0)$ is the weak limit, as $n$ goes to infinity, of $\vec{S}_{\rm L}(-t_n)\vec{u}(t_n)$.  Note that $U_{\rm L}^1$ might be identically $0$.

Fix $A\in \RR$. Then
$$ \vec{u}(t_n)-\vec{v}_{\rm L}^A(t_n)=\vec{U}^1_{\rm L}(t_n)-\vec{v}_{\rm L}^A(t_n)+\sum_{j=2}^J \vec{U}^j_{{\rm L},n}(0)+\vec{w}^J_n(t_n),$$
i.e. $\vec{u}(t_n)-\vec{v}_{\rm L}^A(t_n)$ has a profile decomposition  $\left( \widetilde{U}^j_{\rm L},\left( \lambda_{j,n},t_{j,n} \right)_n \right)_{j\geq 1}$, with $\tU^j_{\rm L}=U^j_{\rm L}$ if $j\geq 2$, and $\tU^1_{\rm L}=U^1_{\rm L}-v^A_{\rm L}$. By Proposition \ref{P:exterior_profiles},
$$\limsup_{n\to\infty}\int_{r\geq A+t_n} \left|r\partial_{r,t} (u-v_{\rm L}^A)(t_n)\right|^m\,dr\geq \limsup_{n\to\infty} \int_{r\geq t_n+A} \left|r\partial_{r,t}(U^1_{\rm L}-v_{\rm L}^A)(t_n)\right|^m,$$
and thus, by \eqref{CVvLA}
$$ \lim_{n\to\infty} \int_{r\geq t_n+A} \left|r\partial_{r,t}(U^1_{\rm L}-v_{\rm L}^A)(t_n)\right|^m=0.$$
Using \eqref{CVvLA} again, we obtain
$$ \lim_{n\to\infty} \int_{r\geq t_n+A} \left|r\partial_{r,t}(U^1_{\rm L}-u)(t_n)\right|^m=0.$$
This is valid for all $A\in \RR$. A simple argument using finite speed of propagation and small data theory yields 
$$ \lim_{t\to\infty} \int_{r\geq t+A} \left|r\partial_{r,t}(U^1_{\rm L}-u)(t)\right|^m=0.$$
Concluding the proof of the proposition with $v_{\rm L}=U^1_{\rm L}$.
\end{proof}
\section{Scattering/blow-up dichotomy}
\label{S:endofproof}
In this section we prove Theorem \ref{T:scattering}. Let $u$ be a solution of \eqref{NLW} such that 
\begin{equation}
 \label{T1}
\liminf_{t\to T_+(u)}\|\vec{u}(t)\|_{\LLL^m}<\infty.
\end{equation} 
We must prove:
\begin{enumerate}
 \item \label{I:global} $T_+(u)=+\infty$;
\item \label{I:scattering} if $T_+(u)=+\infty$, then $u$ scatters to a linear solution in $\LLL^m$.
\end{enumerate}
The proofs of \eqref{I:global} and \eqref{I:scattering} are very similar, and are a simplified version of the corresponding proofs in \cite{DuRoy15P}. We will only sketch the proof of \eqref{I:scattering} and explain the necessary modification to obtain \eqref{I:global}.

\subsection{Proof of scattering}
\label{SS:scattering}
Let $u$ be a global solution and let $t_n\to +\infty$ such that $\vec{u}(t_n)$ is bounded. Let $v_{\rm L}$ be the linear component of $u$, given by Proposition \ref{P:radiation}. Extracting subsequences, we can assume that $(\vec{u}(t_n)-\vec{v}_{\rm L}(t_n))_n$ has a profile decomposition with profiles $U^j_{\rm L}$ and parameters $(\lambda_{j,n},t_{j,n})_n$. As before, we denote by $U^j_{{\rm L},n}$ the modulated profiles (see \eqref{ULnJ}). Extracting subsequences and translating the profiles in time if necessary, one of the following three cases holds.

\EMPH{Case 1} 
\begin{equation}
 \label{HCase1}
\forall j\geq 1, \quad U^j_{\rm L}\equiv 0\text{ or }\lim_{n\to\infty} \frac{-t_{j,n}}{\lambda_{j,n}}=-\infty.
 \end{equation} 
Let $T\gg 1$ such that $\|v_{\rm L}\|_{S((T,+\infty))}<\delta_0/2$, where $\delta_0$ is given by the small data theory (see Proposition  \ref{prop:LWP}). By \eqref{HCase1}, for all $j$, 
$$\lim_{n\to\infty}\|U^j_{{\rm L},n}\|_{S((T-t_n,0))}=\lim_{n\to\infty}\|U^j_{\rm L}\|_{S\left( \left(\frac{T-t_n-t_{j,n}}{\lambda_{j,n}},\frac{-t_{j,n}}{\lambda_{j,n}}\right)\right)}=0.$$
Thus for large $n$, 
$$\|S_{\rm L}(\cdot)\vec{u}(t_n)\|_{S((T-t_n,0))}<\delta_0.$$
By Proposition \ref{prop:LWP}, for large $n$,
$$ \|u\|_{S(T,t_n)}=\|u(t_n+\cdot)\|_{S(T-t_n,0)}<2\delta_0.$$
Letting $n\to\infty$, we deduce $\|u\|_{S(T,+\infty)}<2\delta_0$, and thus $u$ scatters.

\EMPH{Case 2}
We assume
\begin{equation}
 \label{HCase2}
\forall j\geq 1, \quad U^j_{\rm L}\equiv 0\text{ or }\lim_{n\to\infty} \frac{-t_{j,n}}{\lambda_{j,n}}\in\{\pm\infty\}.
 \end{equation} 
and
\begin{equation}
 \label{HCase2b}
 \exists j_0\geq 1,\quad U^{j_0}_{\rm L}\not\equiv 0\text{ and }\lim_{n\to\infty} \frac{-t_{j_0,n}}{\lambda_{j_0,n}}=+\infty.
\end{equation} 
We will use a channel of energy argument based on the following observation, which is a direct consequence of the explicit form of the solution (see \eqref{3}, \eqref{2}):
\begin{claim}
\label{C:ext}
 Let $u_{\rm L}$ be a nonzero solution of the linear wave equation \eqref{LW} with initial data in $\LLL^m$. Then there exists $A\in \RR$ such that 
 $$\liminf_{t\to+\infty}\int_{A+t}^{+\infty} r^m|\partial_{r,t}u_{\rm L}|^m\,dr>0.$$
\end{claim}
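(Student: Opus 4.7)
The plan is to exploit the explicit d'Alembert-type representation \eqref{2}, \eqref{def_pm}, namely $[u_{\rm L}]_-(t,r)=2\dot F(t-r)$, which in the exterior cone $\{r\geq A+t\}$ has a simple form: by the change of variable $s=t-r$,
\begin{equation*}
\int_{A+t}^{+\infty}\bigl|[u_{\rm L}]_-(t,r)\bigr|^m\,dr
=2^m\int_{-\infty}^{-A}|\dot F(s)|^m\,ds,
\end{equation*}
which is \emph{independent of $t$}. So if one can lower-bound the integrand in the claim by a multiple of $|[u_{\rm L}]_-|^m$, one is done.

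The plan for the chain of inequalities goes as follows. First, since $u_{\rm L}(t,\cdot)\in\dotW$ at every fixed $t$ (by \eqref{11} and $\vec u_{\rm L}(0)\in\LLL^m$), Proposition \ref{P:prelim}\eqref{P4} gives
\begin{equation*}
\int_{A+t}^{+\infty}r^m|\partial_r u_{\rm L}(t,r)|^m\,dr
\gtrsim \int_{A+t}^{+\infty}|\partial_r(ru_{\rm L})(t,r)|^m\,dr,
\end{equation*}
while trivially $r^m|\partial_t u_{\rm L}|^m=|\partial_t(ru_{\rm L})|^m$. Setting $a=\partial_r(ru_{\rm L})$ and $b=\partial_t(ru_{\rm L})$, the elementary inequality $|a-b|^m\leq 2^{m-1}(|a|^m+|b|^m)$ gives $|a|^m+|b|^m\gtrsim |a-b|^m=|[u_{\rm L}]_-|^m$. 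Chaining with the change of variable above:
\begin{equation*}
\int_{A+t}^{+\infty}r^m\bigl(|\partial_r u_{\rm L}|^m+|\partial_t u_{\rm L}|^m\bigr)\,dr
\gtrsim \int_{-\infty}^{-A}|\dot F(s)|^m\,ds,
\end{equation*}
with a constant independent of $t$.

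It remains to choose $A\in\RR$ making the right-hand side strictly positive. By Proposition \ref{P:prelimLW}\eqref{P3} applied at $t=0$, and the identities $[u_{\rm L}]_\pm(0,r)=2\dot F(\pm r)$ for $r>0$, one has $E_m(\vec u_{\rm L}(0))=2^m\|\dot F\|_{L^m(\RR)}^m$; in particular $\dot F\in L^m(\RR)$. Since $u_{\rm L}$ is not identically zero, formula \eqref{2} forces $F$ to be nonconstant, hence $\dot F\not\equiv 0$, so $\|\dot F\|_{L^m(\RR)}>0$. Therefore some $A\in\RR$ (possibly very negative, in the case where $\dot F$ happens to vanish on $(-\infty,0)$) satisfies $\int_{-\infty}^{-A}|\dot F|^m\,ds>0$, and since the lower bound above does not depend on $t$, the same bound persists under $\liminf_{t\to+\infty}$. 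The only minor subtlety is the possibility that the mass of $\dot F$ is supported in $(0,+\infty)$: this is why the claim allows $A\in\RR$ rather than $A>0$.
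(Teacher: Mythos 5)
Your proof is correct and follows exactly the route the paper intends: the paper states Claim \ref{C:ext} as "a direct consequence of the explicit form of the solution (see \eqref{3}, \eqref{2})" without writing out details, and your argument is precisely that fleshing-out — the change of variables showing $\int_{A+t}^{+\infty}|[u_{\rm L}]_-(t,r)|^m\,dr$ is constant in $t$, the comparison of $r^m|\partial_{r,t}u_{\rm L}|^m$ with $|\partial_{r,t}(ru_{\rm L})|^m$ via Proposition \ref{P:prelim}\eqref{P4}, and the choice of $A$ from $\dot F\not\equiv 0$. No gaps; the only implicit point (that \eqref{P4} needs $A+t>0$) is harmless since only large $t$ matters for the $\liminf$.
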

If $j\geq 1$, we have 
$$\|U^{j}_{{\rm L},n}\|_{S(\left\{t\geq 0,\; r\geq t\right\})}=\|U^{j}_{\rm L}\|_{S\left(\left\{t\geq -\frac{t_{j,n}}{\lambda_{j,n}},\;r\geq t+\frac{t_{j,n}}{\lambda_{j,n}}\right\}\right)}.$$
Noting that under the assumptions of Case 2, 
$$\forall j\geq 1,\quad \indic_{\left\{t\geq -\frac{t_{j,n}}{\lambda_{j,n}},\;r\geq t+\frac{t_{j,n}}{\lambda_{j,n}}\right\}}\underset{n\to\infty}{\longrightarrow}0$$
pointwise, otherwise $U^j_{\rm L}\equiv 0$. We obtain
$$\forall j\geq 1,\quad \lim_{n\to\infty}\|U^j_{{\rm L},n}\|_{S\left(\left\{r\geq t\geq 0\right\}\right)}=0$$
and thus
$$\lim_{n\to\infty}\left\|S_{\rm L}(t)\vec{u}(t_n)\right\|_{S\left(\left\{r\geq t\geq 0\right\}\right)}=0.$$ 
By the small data theory (see Proposition \ref{prop:LWP}) and finite speed of propagation
\begin{multline}
\label{utn_linear}
\lim_{n\to\infty}
\bigg(  \left\|u(t_n+\cdot)\right\|_{S\left(\left\{r\geq t\geq 0\right\}\right)}\\
+\sup_{t\geq 0} \int_t^{+\infty}\left|\partial_{t,r}(u(t_n+t)-S_{\rm L}(t)\vec{u}(t_n))\right|^mr^m\,dr\bigg)=0. 
\end{multline} 
Let $j_0$ be as in \eqref{HCase2b}. By Claim \ref{C:ext}, there exists $A\in \RR$ such that
$$\liminf_{t\to +\infty} \int_{\lambda_{j_0,n}A-t_{j_0,n}+t}^{\infty} \big|r\partial_{t,r}U^{j_0}_{{\rm L},n}\big|^{m}\,dr>0.$$
For large $n$, $\lambda_{j_0,n}A-t_{j_0,n}\geq 0$. By Proposition \ref{P:exterior_profiles}, we deduce from \eqref{utn_linear} that for large $n$,
$$\liminf_{t\to+\infty} \int_{\lambda_{j_0,n}A-t_{j_0,n}+t-t_{n}}^{\infty} r^m|\partial_{t,r}(u(t,r)-v_{\rm L}(t,r))|^m\,dr>0,$$
contradicting the definition of $v_{\rm L}$.

\EMPH{Case 3} In this last case we assume 
\begin{equation}
\label{HCase3}
\exists j\geq 1,\quad \forall n,\quad t_{j,n}=0\text{ and }U^j_{\rm L}\not\equiv 0.
\end{equation} 
This is the core of the proof, where we use Proposition \ref{P:BB1}, and thus the fact that equation \eqref{NLW} has no nonzero stationary solution in $\LLL^m$.

We will use Subsection \ref{SS:approx} to approximate $u$, outside appropriate wave cones, by a sum of profiles. As in Subsection \ref{SS:approx}, we let $\JJJ_0$ be the set of indices $j$ such that $t_{j,n}=0$ for all $n$ and  $\JJJ_{\infty}$  the set of $j$ such that $t_{j,n}/\lambda_{j,n}$ goes to $+\infty$ or $-\infty$. Extracting subsequences and translating the profiles in time if necessary, we can assume $\NN\setminus\{0\}=\JJJ_0\cup \JJJ_{\infty}$.
Let $\delta_1>0$ be a small number, smaller than the number given by the small data theory, and such that there exists $j\in \JJJ_0$ with 
$\|\vec{U}^j_{\rm L}(0)\|_{\LLL^m}>\delta_1$. We let $j_0\in \JJJ_0$ such that $\|\vec{U}^{j_0}_{\rm L}(0)\|_{\LLL^m}>\delta_1$, and 
\begin{equation}
\label{cond_j0}
\left(j\in \JJJ_0\text{ and } \|\vec{U}^j_{\rm L}(0)\|_{\LLL^m}>\delta_1\right)\Longrightarrow \lim_{n\to\infty} \frac{\lambda_{j_0,n}}{\lambda_{j,n}}=+\infty. 
\end{equation} 
We note that by Proposition \ref{P:Pythagorean}, there exists a finite number of $j\in \JJJ_0$ with 
$\|\vec{U}^j_{\rm L}(0)\|_{\LLL^m}>\delta_1$, so that (in view of the pseudo-orthogonality property \eqref{P-O}) $j_0$ is well-defined. 
By Proposition \ref{P:BB1}, there exist $A,\,\eta>0$, $U^{j_0}\in S(\RR)$ such that $\vec{U}^{j_0}\in C^0(\RR,\LLL^m)$,
\begin{equation}
\label{eq:Uj0}
(\partial_t^2-\Delta)U^{j_0}=\iota |U^{j_0}|^{2m}U^{j_0}\indic_{\{r\geq A+|t|\}},\quad \vec{U}^{j_0}(0)=\TTT_A(\vec{U}^{j_0}_{\rm L}(0)),
\end{equation}
and the following holds for all $t\geq 0$ or for all $t\leq 0$
\begin{equation}
\label{channel3}
\int_{|t|+A}^{+\infty} |r\partial_{t,r}U^{j_0}|^{m}\,dr\geq \eta.
\end{equation} 
Note that $(U_{\rm L}^j,\lambda_{j,n},t_{j,n})_{j\geq 0}$
with $U^0_{\rm L}=v_{\rm L}$ and $\lambda_{0,n}= 1, t_{0,n}=t_n$  is a profile decomposition of $\vec{u}(t_n)$.
According to Lemma \ref{L:approx}, 
\begin{equation}
 \label{approx3}
u(t+t_n)=v_{\rm L}(t+t_n)+\sum_{j=1}^J U^j_n(t)+w_n^J(t)+\eps_n^J(t),\quad t \in[-t_n,+\infty)
 \end{equation} 
where the modulated profiles $U^j_n$ for $j\neq j_0$ are defined in Subsection \ref{SS:approx} and 
\begin{equation*}
\limsup_{n\to\infty}\left(\|\eps_n^J\|_{S(\{t\in [-t_n,+\infty),\;r>A\lambda_{j_0,n}+|t|\})}+\sup_{t\geq -t_n}\int_{|t|+A\lambda_{j_0,n}}^{+\infty} \left|r\partial_{t,r}\eps_n^J(t,r)\right|^{m}\,dr\right)
\end{equation*}
goes to $0$ as $J$ goes to infinity.
 It can be deduced from Proposition \ref{P:exterior_profiles} that  for all sequence $(\theta_n)_n$ in $[-t_n,+\infty)$,
\begin{multline}
\label{exterior_NL}
o_n(1)+\int_{A\lambda_{j_0,n}+|\theta_n|}^{+\infty}|r\partial_{r,t} (u-v_{\rm L})(t_n+\theta_n,r)|^m\,dr\\
\geq \int_{A\lambda_{j_0,n}+|\theta_n|}^{+\infty}\left|r\partial_{r,t} U_{n}^{j_0}(\theta_n,r)\right|^m\,dr.
\end{multline}
Indeed, this can be proved by noticing that \eqref{approx3} (and its time derivative) at $t=\theta_n$ can be considered as a profile decomposition of the sequence $\big((\vec{u}-\vec{v}_{\rm L})(\theta_n+t_n)\big)_n$ and using Proposition \ref{P:exterior_profiles} and finite speed of propagation. We refer to the proof of (3.18) in \cite{DuRoy15P} for a detailed proof in a very similar setting.

If \eqref{channel3} holds for $t\geq 0$, then by \eqref{exterior_NL}, for large $n$,
$$\limsup_{t\to+\infty} \int_{t+A\lambda_{j_0,n}}^{+\infty} |r\partial_{t,r}(u(t_n,r)-v_{\rm L}(t_n,r))|^{m}\,dr\geq \frac{\eta}{2},$$
contradicting the definition of $v_{\rm L}$. 

If \eqref{channel3} holds for $t\leq 0$, we use \eqref{exterior_NL} at $\theta_n=-t_n$ together with \eqref{channel3} and obtain that for large $n$ 
$$\int_{t_n+A\lambda_{j_0,n}}^{+\infty} |r\partial_{t,r}(u(0,r)-v_{\rm L}(0,r))|^{m}\,dr\geq \frac{\eta}{2},$$
a contradiction since $\vec{u}(0)\in \LLL^m$. The proof is complete.

\subsection{Proof of global existence}
\label{SS:global}
We argue by contradiction, assuming that \eqref{T1} holds and that $T_+=T_+(u)$ is finite. Let $v$ be the regular part of $u$ at $t=T_+$, defined by Proposition \ref{P:regular}. Recall that $v$ is a solution of \eqref{NLW} defined in a neighborhood of $T_+(u)$ and such that
\begin{equation}
\label{cond_v}
\forall t\in I_{\max}(u)\cap I_{\max}(v),\; \forall r>T_+-t,\quad \vec{u}(t,r)=\vec{v}(t,r). 
\end{equation} 
As in Subsection \ref{SS:scattering}, we consider a sequence $t_n\to T_+$ such that $\vec{u}(t_n)$ is bounded in $\LLL^m$, and we assume (extracting subsequences if necessary) that $\left(\vec{u}(t_n)-\vec{v}(t_n)\right)_n$ has  a profile decomposition with profiles $U^j_{\rm L}$ and parameters $(\lambda_{j,n},t_{j,n})_n$. We distinguish again between three cases.

\EMPH{Case 1} We assume \eqref{HCase1}. By the same proof as in Case 1 of Subsection \ref{SS:scattering}, we obtain:
$$\lim_{n\to\infty} \left\|S_{\rm L}(\cdot)\left(\vec{u}(t_n)-\vec{v}(t_n)\right)\right\|_{S((-\infty,0))}=0.$$
By Lemma \ref{L:ELTPT}, if $T<T_+(u)$ is in the domain of definition of $v$, close to $T_+(u)$,
$$\lim_{n\to\infty} \left\|\vec{u}(t_n)\right\|_{S((T,t_n))}<\infty,$$
which contradicts the blow-up criterion:
$$\left\|\vec{u}(t_n)\right\|_{S\big((T,T_+(u))\big)}=+\infty,$$

\EMPH{Case 2} We assume \eqref{HCase2} and \eqref{HCase2b}.
Fix $j_0\geq 1$ such that \eqref{HCase2b} holds.
Using Claim \ref{C:ext} and an argument very similar to the one of Case 2 of Subsection \ref{SS:scattering}, we obtain that for large $n$,
$$ \liminf_{t\to T_+(u)} \int_{r\geq A\lambda_{j_0,n}-t_{j_0,n}+t-t_n}|\partial_{t,r}(u-v)(t,r)|^m\,r^mdr>0,$$
where $A\in \RR$ is given by Claim \ref{C:ext},
contradicting \eqref{cond_v} (since for large $n$, $A\lambda_{j_0,n}-t_{j_0,n}\geq 0$).

\EMPH{Case 3} We assume \eqref{HCase3}. We define $\JJJ_0$, $\JJJ_{\infty}$ as in Case 3 of Subsection \ref{SS:scattering} and choose $j_0\in \JJJ_0$ such that \eqref{cond_j0} holds. Using Proposition \ref{P:BB1}, we obtain $A,\,\eta>0$, and a solution $U^{j_0}\in S(\RR)$ of \eqref{eq:Uj0}, such that \eqref{channel3} holds for all $t\geq 0$ or for all $t\leq 0$. We distinguish two cases. 

If \eqref{channel3} holds for all $t\geq 0$, then we prove using Lemma \ref{L:ELTPT} and Proposition \ref{P:exterior_profiles} that for large $n$,
$$ \liminf_{t\to T_+(u)} \int_{r\geq A\lambda_{j_0,n}+t-t_n}|\partial_{t,r}(u-v)(t,r)|^m\,r^mdr>0,$$
a contradiction with \eqref{cond_v}.

If \eqref{channel3} holds for all $t\leq 0$, we let $T\in [0,T_+(u))$ such that $T$ is in the domain of definition of $v$. Using Lemma \ref{L:ELTPT} and Proposition \ref{P:exterior_profiles}, we deduce that for large $n$:
$$\int_{r\geq A\lambda_{j_0,n}+t_n-T}|\partial_{t,r}(u-v)(T,r)|^m\,r^mdr\geq \frac{\eta}{2},$$
a contradiction for large $n$, since $\partial_{t,r}(u-v)(T,r)$ is supported in $|x|\leq T_+-T$. This concludes the sketch of proof.

\appendix
\section{Proof of Proposition \ref{prop:fs}}
	\EMPH{The ``only if '' part}
	First of all, we have a sequence of smooth radial functions $(f_{n})_{n}$
	with compact supports, such that
	\begin{equation}
	\label{eq:fs-3}
	\int_{0}^{+\infty}|\partial_{r}(f-f_{n})(r)|^{m}r^{m}dr\longrightarrow 0,\,n\rightarrow\infty\,.
	\end{equation}
	As a consequence, we clearly have \eqref{eq:fs-1}.
	Notice that 
	for $0<r<r'<+\infty$, we have
	\[
	|f(r')-f(r)|\leq \frac{C_{m}}{r^{\frac1m}}
	\left(\int_{r}^{r'}|s\partial_{s}f(s)|^{m}ds\right)^{1/m}\,,
	\]
	and this yields that $f(r)$ is continuous.
	
	To see \eqref{eq:fs-2}, we first prove:
	\[|f(r)|\leq \frac{1}{r^{1/m}}
	\left(\int_{r}^{+\infty}|s\partial_{s}f(s)|^{m}ds\right)^{1/m}\,,\]
	and 
	\[
	|rf(r)|\leq r^{\frac{m-1}{m}}
	\left(\int_{0}^{r}|\partial_{s}(sf(s))|^{m}ds\right)^{1/m}\,.
	\]
indeed, if $f\in C_{0}^{\infty}((0,+\infty))$, then the preceding inequality follows from the fundamental theorem of calculus and H\"older inequality. The case of a general function $f$ can be deduced from \eqref{eq:fs-3}. The desired estimate \eqref{eq:fs-2} is an immediate consequence of these two inequalities.

	
	\EMPH{The`` if '' part} Given a radial function $f(x)$ on $\mathbb R^{3}$,
	satisfying the conditions \eqref{eq:fs-1}\eqref{eq:fs-2}, we are to construct a sequence of smooth
	radial functions $f_{n}(x)$ compactly supported in $\mathbb R^{3}$ such that  \eqref{eq:fs-3} holds.
	\medskip
	
	To achieve this, we take a smooth radial function $\varphi(x)$ on 
	$\mathbb R^{3}$, such that $\varphi(x)=1$ for $|x|\leq 1$ and 
	$\varphi(x)=0$ if $|x|\geq 2$. Let $(\eps_{n})_{n}$ be a sequence
	of positive numbers, tending to zero as $n\rightarrow\infty$.
	Define 
	\begin{equation}
	\label{eq:fn}
	f_{n}(x)=\varphi(\eps_{n}x)
	\left(1-\varphi\left(\frac{x}{\eps_{n}}\right)\right)
	\bigl(f*\zeta_{\eps_{n}}\bigr)(x)\,,
	\end{equation}
	where $\zeta_{\eps}(\varrho)$ is the usual approximate delta function
	supported in $-\eps/2<\varrho<0$ and  $f*\zeta_{\eps}$
	denotes the radial convolution as in \cite{St77}, namely
	\[
	f*\zeta_{\eps}(x)=\int_{-\eps/2}^{0}\zeta_{\eps}(\varrho)\,
	f(|x|-\varrho)\,d\varrho\,.
	\]
	Then it is clear that $f_{n}(x)$ is smooth, radial and supported in $\{x\in \mathbb R^{3}\mid \eps_{n}\leq |x|\leq 2/\eps_{n}\}$.
	We have
	\begin{align}
	\nonumber&\partial_{r}\left(f(r)-f_{n}(r)\right)\\
	\label{a5}=&-\eps_{n}(\partial_{r}\varphi)(\eps_n r)
	\left(1-\varphi\left(\frac{r}{\eps_{n}}\right)\right)f(r)\\
	\label{a6}&+\varphi(\eps_{n}r)\frac{1}{\eps_{n}}(\partial_{r}\varphi)
	\left(\frac{r}{\eps_{n}}\right)f(r)\\
	\label{a7}&+\left[1-\varphi(\eps_{n}r)
	\left(1-\varphi\left(\frac{r}{\eps_{n}}\right)\right)\right]\partial_{r}f(r)\\
	\label{a8}&+\partial_{r}\left[\varphi(\eps_{n}r)
	\left(1-\varphi\left(\frac{r}{\eps_{n}}\right)\right)\right]
	\int_{-\eps_{n}/2}^{0}\Bigl(f(r)-f(r-\varrho)\Bigr)\,\zeta_{\eps_{n}}(\varrho)\,d\varrho\\
	\label{a9}&+\varphi(\eps_{n}r)
	\left(1-\varphi\left(\frac{r}{\eps_{n}}\right)\right)
	\int_{-\eps_{n}/2}^{0}\Bigl(\partial_{r}f(r)-\partial_{r}f(r-\varrho)\Bigr)\,\zeta_{\eps_{n}}(\varrho)\,d\varrho\,.
	\end{align}
	In view of \eqref{eq:fs-1}, one easily sees that 
	multiplying  by $r$ on both sides of the above identity,
	raising them to the power $m$ and integrating over $(0,+\infty)$, 
	we have the contributions of \eqref{a8}\eqref{a9} go to zero
	as $n\rightarrow \infty$. In fact, this is immediate for \eqref{a9} in view of
	the boundedness of $\varphi$ and the fact that $\zeta_{\eps}$
	is an approximation of the identity. For \eqref{a8}, we need to
	estimate two terms produced correspondingly by the cases when $\partial_{r}$
	hits on $\varphi(\eps_{n}r)$ and $\varphi(r/\eps_{n})$.
	In the first case, we use the fundamental theorem of calculus to write
	\[
	f(r)-f(r-\varrho)=\int_{0}^{1}\varrho\,\partial_{r}f(r-\theta\varrho)\,d\theta\,.
	\]
	Applying Minkowski's inequality, we are led to 
	estimating
	\begin{align*}
	\eps_{n}
	\int_{-\eps_{n}/2}^{0}\int_{0}^{1}
	\left(\int_{1/2\eps_{n}}^{4/\eps_{n}}|r\partial_{r}f(r)|^{m}dr\right)^{1/m}
	|\varrho\,\zeta_{\eps_{n}}(\varrho)|\,d\theta d\varrho
	\end{align*}
	which is clearly tending to zero as $n\rightarrow \infty$.
	A similar argument applies to the second case.
	In fact,  applying the same trick will lead us to estimating
	\[
	\int_{-\eps_{n}/2}^{0}\int_{0}^{1}\left(\int_{\eps_{n}/2}^{2\eps_{n}}
	|r\partial_{r}f(r)|^{m}dr\right)^{1/m}
	\frac{|\varrho|}{\eps_{n}}\,\zeta_{\eps_{n}}(\varrho)\,d\theta \,d\varrho\,,
	\]
	which tends to zero as $n\rightarrow \infty$.
    \medskip
    
	Next, by
	invoking \eqref{eq:fs-2}, one sees that the contribution from \eqref{a5} is bounded by 
	\[
	\left(\sup_{\frac{1}{\eps_{n}}\leq r\leq \frac{2}{\eps_{n}}}
	r^{\frac1m}\bigl|f(r)|\right)^{m}\cdot
	\int_{1/\eps_{n}}^{2/\eps_{n}}|\varphi'(\eps_{n}r)|^{m}\eps_{n}^mr^{m-1}dr\rightarrow 0\,,\,n\rightarrow\infty\,.
	\]
	Similar argument applies to \eqref{a6}
	thanks to \eqref{eq:fs-2}. Finally, the contribution of \eqref{a7}  is easily seen to be bounded by 
	\[\int_{0}^{2\eps_{n}}|r\partial_{r}f(r)|^{m}dr+
	\int_{1/\eps_{n}}^{+\infty}|r\partial_{r}f(r)|^{m}dr\longrightarrow0,\;
	n\rightarrow \infty\,.\]
	The proof is complete.

\bibliographystyle{plain}
\bibliography{toto}
\end{document}